\newtheorem{thm}{Theorem}[section]
\newtheorem{lem}[thm]{Lemma}
\newtheorem{prop}[thm]{Proposition}
\newtheorem{propdef}[thm]{Proposition/Definition}
\theoremstyle{remark}
\newtheorem{rmk}[thm]{Remark}
\title{Uhlenbeck compactification as a Bridgeland moduli space}
\author{Tuomas Tajakka}
\date{July 21, 2020}
\begin{document}

\maketitle

\begin{abstract}
    Let $(X,H)$ be a smooth, projective, polarized surface over $\C$, and let $v \in \Kn(X)$ be a class of positive rank. We prove that for certain Bridgeland stability conditions $\si = (\sA, Z)$ ``on the vertical wall'' for $v$, the good moduli space $M^\si(v)$ parameterizing S-equivalence classes of $\si$-semistable objects of class $v$ in $\sA$ is projective. Moreover, we construct a bijective morphism $M^{\mathrm{Uhl}}(v) \to M^\si(v)$ from the Uhlenbeck compactification of $\mu$-stable vector bundles.
\end{abstract}

\tableofcontents

%%%%%%%%%%%%%%%%%%%%%%%%%%%%%%%%%%%%%%%%%%%%%
%%%%%%%%%%%%%%  INTRODUCTION  %%%%%%%%%%%%%%%
%%%%%%%%%%%%%%%%%%%%%%%%%%%%%%%%%%%%%%%%%%%%%

\section{Introduction}
The aim of this paper is to prove projectivity of certain moduli spaces of Bridgeland semistable objects on a smooth, projective surface $X$ over $\C$, and relate these moduli spaces to the Uhlenbeck compactification of the moduli of $\mu$-stable vector bundles on $X$.

\subsection*{Classical background}

Constructing projective moduli spaces is an important problem in algebraic geometry. Some of the early successes in this direction were provided by Mumford who developed the machinery of Geometric Invariant Theory (GIT) for taking projective quotients of varieties by group actions, and used it to construct various moduli spaces. An important example of this method is the construction of the moduli space of slope-semistable sheaves on a curve \cite{mumford}. The \textit{slope} of a coherent sheaf $F$ on a smooth, projective curve $C$ is the rational number $\mu(F) = \deg(c_1(F))/\rk(F)$. A locally free sheaf $F$ is \textit{semistable} if for every proper nonzero subsheaf $E \subs F$ the inequality $\mu(E) \le \mu(F)$ holds, \textit{stable} if $\mu(E) < \mu(F)$, and \textit{polystable} if $F \cong \oplus_i F_i$ where the $F_i$ are stable bundles of the same slope. Every semistable sheaf $F$ has a \textit{Jordan-H\"older filtration} by stable sheaves $F_i$, and the polystable sheaf $\gr(F) = \oplus_i F_i$ is called the \textit{associated graded} of $F$. Two semistable sheaves $F$ and $F'$ are \textit{S-equivalent} if $\gr(F) \cong \gr(F')$, and every S-equivalence class contains a unique polystable sheaf. The projective moduli space Mumford constructed parameterizes S-equivalence classes of semistable sheaves, or equivalently isomorphism classes of polystable sheaves, and contains the locus of stable sheaves as an open subscheme.

The notion of slope-stability has been generalized to a higher dimensional smooth, projective, polarized variety $(X, H)$ in various ways. One successful notion is Geiseker-stability, where the slope is replaced by the reduced Hilbert polynomial which takes into account all Chern classes. Moduli spaces parameterizing S-equivalence classes of Gieseker-semistable sheaves were constructed using GIT by Gieseker, Maruyama, and Simpson. However, unlike in the case of a curve, to obtain a projective moduli space, some non-locally free coherent sheaves must be included in the moduli problem.

A direct generalization of slope-stability to a higher dimensional $X$ is obtained by modifying the formula to be $\mu(F) = (c_1(F) \cdot H)/\rk(F)$. Although Jordan-H\"older filtrations exist, and $\mu$-stability has many other useful properties, this notion of stability does not allow for a moduli space parameterizing S-equivalence classes. However, when $X$ is a surface, a related projective scheme parameterizing sheaves up to a coarser equivalence relation was constructed by Li \cite{li} following work of Uhlenbeck and Donaldson in gauge theory. This so-called Uhlenbeck compactification contains the moduli of $\mu$-stable vector bundles as an open subscheme. Two $\mu$-semistable sheaves $F_1$ and $F_2$ are identified in the Uhlenbeck compactification precisely when $\gr(F_1)^{\vee\vee} \cong \gr(F_2)^{\vee\vee}$ and the 0-dimensional sheaves $\gr(F_1)^{\vee\vee}/\gr(F_1)$ and $\gr(F_2)^{\vee\vee}/\gr(F_2)$ are supported at the same points of $X$ with the same lengths. 

\subsection*{Bridgeland stability}

A more general notion of stability was introduced by Bridgeland \cite{bridgeland} following the work of Douglas in string theory. Bridgeland's idea is to extend stability from coherent sheaves to objects in the derived category $D^b(X)$. A \textit{Bridgeland stability condition} on $X$ is a pair $\si = (\sA, Z)$ consisting of a heart of a bounded t-structure $\sA \subs D^b(X)$ and a group homomorphism 
\[ Z: K(\sA) \to \C \]
that gives rise to a slope function on $\sA$. The set of all such stability conditions naturally forms a complex manifold endowed with interesting wall-and-chamber structures. It was soon realized that Bridgeland stability is suitable for studying the birational geometry of classical moduli spaces. Namely, the moduli space of Bridgeland semistable objects remains constant within each open chamber, and moduli spaces corresponding to open chambers separated by a wall are frequently birational. A prominent example of this approach is the complete description of the minimal model program of the Hilbert scheme of points on a surface \cite{ABCH}.

Although constructing stability conditions on higher dimensional varieties remains an important open problem, a general method for producing stability conditions on a surface $X$ was developed by Bridgeland \cite{bridgelandK3} for K3 surfaces, and extended by Arcara and Bertram \cite{ABL13} for all surfaces. Moreover, a moduli space $M^\si(v)$ parameterizing S-equivalence classes of $\si$-semistable objects in $\sA$ of numerical class $v$ exists as a proper algebraic space \cite[Theorem 7.25, Example 7.27]{AHLH} and possesses a natural nef divisor class $l_\si$ that varies with the stability condition $\si$ \cite{BM}. Despite this, projectivity of Bridgeland moduli spaces is known only in few cases.

\subsection*{Statement of results}

The goal of this work is to prove projectivity of the Bridgeland moduli space when $X$ is an arbitrary smooth, projective surface, and $\si$ lies on the ``vertical wall'' bounding the chamber corresponding to Gieseker stability. The main results are Theorem \ref{projectivity} and Theorem \ref{uhlenbeck} and can be summarized as follows.
\begin{thm}
    Let $(X,H)$ be a smooth, projective, polarized surface over $\C$, and let $v \in \Kn(X)$ be a numerical class of positive rank. There exists a Bridgeland stability condition $\si = (\sA, Z)$ with the following properties.
    \begin{enumerate}[(a)]
        \item The $\si$-polystable objects of class $v$ in $\sA$ are of the form
        \[ E = F \oplus \left(\bigoplus_i \Oh_{p_i}^{\oplus n_i} [-1]\right), \]
        where $F$ is a $\mu$-polystable locally free sheaf and the $\Oh_{p_i}$ are structure sheaves of closed points $p_i \in X$.
        \item The good moduli space $M^\si(v)$ parameterizing $\si$-polystable objects is projective and the class $l_\si$ is ample.
        \item There is a bijective morphism $M^{\mathrm{Uhl}}(v) \to M^\si(v)$ from the Uhlenbeck compactification of $\mu$-stable locally free sheaves.
    \end{enumerate}
\end{thm}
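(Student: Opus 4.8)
The plan is to construct $\si$ as a stability condition lying on the ``vertical wall'' $\beta = \mu_H(v)$ and to analyze $\si$-(semi)stability of objects of class $v$ very explicitly, after which projectivity and the comparison with the Uhlenbeck space will follow from positivity of $l_\si$ together with Li's construction. Recall that $\sA$ arises as a tilt of $\mathrm{Coh}(X)$ at a torsion pair $(\mathcal{T},\mathcal{F})$ determined by $\mu_H$-stability and the twist $\beta$, so that every $E \in \sA$ sits in a canonical sequence recording the torsion-free and torsion parts of its cohomology; in particular both a $\mu_H$-stable locally free sheaf $F$ with $\mu_H(F)=\mu_H(v)$ and the shifted skyscrapers $\Oh_p[-1]$ lie in $\sA$. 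The defining feature of the vertical wall is that the central charge $Z(v)$ is real, so that the class $v$ and the objects $F$, $\Oh_p[-1]$ all lie in $\sA$ with one and the same $\si$-phase $\phi_\si(v)$.

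First I would prove (a). The crucial observation is that for a $\mu_H$-stable torsion-free sheaf $G$ of slope $\mu_H(v)$ that fails to be locally free, the canonical sequence $0 \to G \to G^{\vee\vee} \to Q \to 0$ in $\mathrm{Coh}(X)$, with $Q$ supported in dimension $0$, becomes a short exact sequence $0 \to Q[-1] \to G \to G^{\vee\vee} \to 0$ in $\sA$. Since $Q[-1]$ is an iterated extension of objects $\Oh_p[-1]$ of phase $\phi_\si(v)$ and $G^{\vee\vee}$ is $\mu_H$-semistable locally free of the same phase, this exhibits $G$ as $\si$-semistable but never $\si$-stable once it fails to be reflexive. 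I would then classify the $\si$-stable objects of phase $\phi_\si(v)$: using the torsion-pair description of $\sA$ together with the vanishings forced by the wall equation on $Z$, one shows the only possibilities are the $\mu_H$-stable locally free sheaves of slope $\mu_H(v)$ and the objects $\Oh_p[-1]$. Passing to Jordan--H\"older filtrations, every $\si$-polystable object of class $v$ is a direct sum of such factors, which is exactly the asserted form $F \oplus \big(\bigoplus_i \Oh_{p_i}^{\oplus n_i}[-1]\big)$ with $F$ locally free and $\mu_H$-polystable.

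Next I would address (c) and deduce (b) from it. Part (a) already furnishes a set-theoretic bijection between $\si$-polystable objects and the data of a $\mu_H$-polystable locally free sheaf $F^{\vee\vee}$ together with a $0$-cycle $\sum_i n_i p_i$, which is precisely the data classified by $M^{\mathrm{Uhl}}(v)$. To promote this to a morphism I would factor through the Gieseker moduli space: crossing the vertical wall from the Gieseker chamber yields a surjective morphism $M^{\mathrm{Gies}}(v) \to M^\si(v)$ onto the good moduli space, while Li's construction produces a surjection $M^{\mathrm{Gies}}(v) \to M^{\mathrm{Uhl}}(v)$; comparing the two equivalence relations on closed points shows the former is constant on the fibers of the latter, and since $M^{\mathrm{Gies}}(v) \to M^{\mathrm{Uhl}}(v)$ is a categorical quotient, the map factors as $M^{\mathrm{Uhl}}(v) \to M^\si(v)$, bijective by (a). For projectivity, the Bayer--Macr\`i positivity lemma gives that $l_\si$ is nef, and I would compute the pullback $f^{*}l_\si$ along $f\colon M^{\mathrm{Uhl}}(v) \to M^\si(v)$, identifying it with the ample determinant polarization of Li's construction by checking both are determinant line bundles attached to the same orthogonal class. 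A bijective morphism from a projective variety to a proper algebraic space is finite and surjective, so ampleness of $f^{*}l_\si$ descends to ampleness of $l_\si$, whence $M^\si(v)$ is projective.

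The main obstacle I anticipate lives in the passage from (a) to (b) and (c). Proving that the \emph{only} $\si$-stable objects of phase $\phi_\si(v)$ are the locally free $\mu_H$-stable sheaves and the $\Oh_p[-1]$ requires a delicate cohomological analysis inside $\sA$, ruling out exotic two-term complexes, and this is where the precise wall equation on $Z$ must be exploited. The genuinely hard point, however, is ampleness of $l_\si$: the positivity lemma alone yields only that $l_\si$ is nef and strictly positive on curves, which does not imply ampleness on an a priori non-schematic proper algebraic space. The argument must therefore route through the finite morphism to the projective Uhlenbeck space together with an exact identification of $l_\si$ with Li's polarization. Matching these two line bundles---one defined via the Bridgeland central charge on the moduli stack, the other via Li's gauge-theoretic construction---is the technical heart of the proof.
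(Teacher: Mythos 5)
Your part (a) follows the same route as the paper (tilting analysis, classification of $\si$-stable objects of infinite slope as $\mu$-stable locally free sheaves and shifted skyscrapers, then Jordan--H\"older), and that part is fine. The route you propose for (b) and (c), however, contains two concrete gaps. First, the claimed surjection $M^{\mathrm{G}}(v) \to M^\si(v)$ from the Gieseker moduli space fails in general: not every $\si$-S-equivalence class contains a Gieseker-semistable sheaf. Take $X = \p^2$ and $v$ of rank $2$ with $c_1 = 0$, $c_2 = 1$. Any $\mu$-stable sheaf of this class would have $h^0 = h^2 = 0$ (a section would give $\Oh \hookrightarrow G$ of equal slope) and hence $\chi = 2 - c_2 \le 0$, forcing $c_2 \ge 2$; and strictly Gieseker-semistable sheaves are impossible since the only candidate JH factors $\Oh$ and $I_p$ have unequal reduced Hilbert polynomials. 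So $M^{\mathrm{G}}(v) = \emptyset$, while $\Oh \oplus I_p$ is $\mu$-semistable, so $\sM^\mu(v)$ and $M^\si(v)$ are nonempty. This is exactly why the paper works with the stack $\sM^\mu(v)$ of \emph{all} $\mu$-semistable sheaves, describes $M^{\mathrm{Uhl}}(v)$ as $\Proj$ of the section ring of $\la_{\sE_\mu}(u)$ over $\sM^\mu(v)$, and builds $\Phi$ simply by restricting sections of $\la_\sE(u)$ from $\sM^\si(v)$ to $\sM^\mu(v)$ --- no Gieseker space needed. Second, even where $M^{\mathrm{G}}(v)$ is nonempty, your factorization step is not a valid inference: set-theoretic constancy on the fibers of $M^{\mathrm{G}}(v) \to M^{\mathrm{Uhl}}(v)$ produces a \emph{morphism} out of $M^{\mathrm{Uhl}}(v)$ only under hypotheses such as $\pi_*\Oh = \Oh$ (or normality of the target plus connected fibers), none of which is known for Li's space, whose scheme structure is notoriously delicate; ``categorical quotient'' by itself only factors genuinely invariant morphisms, not fiberwise-constant ones.

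More fundamentally, deducing (b) from Li's projectivity inverts the paper's logic and reimports GIT, since Li's construction rests on the GIT-built Gieseker space; the paper's stated point is to prove ampleness of $l_\si$ \emph{without} GIT, and you never supply the mechanism that actually does this. The paper produces sections of $\la_\sE(u)$ directly on $\sM^\si(v)$: given a semistable $E$ with triangle $F \to E \to T[-1]$, Flenner's restriction theorem yields a smooth $C \in |aH|$ avoiding the support of $T$ with $E|_C = F|_C$ slope-semistable; the Faltings--Seshadri lemma then gives a bundle $G$ on $C$ of class $-m w|_C$ (orthogonal to $v|_C$) with $H^0(C, E|_C \otimes G) = H^1(C, E|_C \otimes G) = 0$, and the determinantal-section machinery turns this into a canonical section $\de_G$ of $\la_\sE(u)^{m a^2}$ nonvanishing at the given point. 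Quasicompactness gives finitely many generating sections, the stabilizer computation ($\chi(S_i \otimes u) = 0$ for all stable factors) descends them to the good moduli space, and strict positivity of $L_\si$ on curves makes the induced map $M^\si(v) \to \p^n$ finite, whence $M^\si(v)$ is a projective scheme and $L_\si$ is ample. Part (c) then falls out by comparing the two $\Proj$ descriptions. Your final step --- descending ampleness along a finite surjection of proper algebraic spaces --- is correct in principle, but the morphism you would descend along was never validly constructed, so the proposal does not establish (b) or (c) in the generality of the theorem.
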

Our main mathematical contribution is part (b) of the theorem. The idea is to show that $l_\si$ is ample by directly producing sections. The technique is a combination of Li's construction of the Uhlenbeck compactification and an argument of Seshadri to produce sections of determinantal line bundles on the moduli space of slope-semistable vector bundles on a curve \cite{seshadri}.

Part (c) of the theorem follows straightforwardly by comparing the proof of part (b) and Li's construction of the Uhlenbeck compactification. To convince the reader that the bijection is plausible, consider a $\mu$-polystable torsion-free sheaf $F$ on $X$. The sheaf $F$ fits into a short exact sequence
\[ 0 \to F \to F^{\vee\vee} \to T \to 0, \]
where $F^{\vee\vee}$ is a $\mu$-polystable locally free sheaf and $T$ has 0-dimensional support. Recall that the Uhlenbeck compactification records the information of $F^{\vee\vee}$ together with the length $l_p(T)$ of $T$ at closed points $p \in X$. On the other hand, the above exact sequence rotates to the exact sequence
\[ 0 \to T[-1] \to F \to F^{\vee\vee} \to 0 \]
in the heart $\sA \subs D^b(X)$. The containment $T[-1] \subs F$ is part of the Jordan-H\"older filtration with respect to $\si$, and in fact $F$ is S-equivalent to the $\si$-polystable object
\[ F^{\vee\vee} \oplus \left(\bigoplus_{p \in X} \Oh_p^{\oplus l_p(T)}[-1]\right). \]

Part (a) of the theorem is known to experts in general and worked out by Lo and Qin in \cite{LQ} in the case when $\rk(v)$ and $H \cdot c_1(v)$ are coprime, where the authors also observe the set theoretic bijection with the Uhlenbeck compactification. Although we include a proof in the general case, we claim no originality.

\subsection*{Relation to previous work}
Bridgeland moduli spaces on surfaces are known to be projective in only some cases:
\begin{itemize}
    \item When $X$ is either $\p^2$ \cite{ABCH} or $\p^1 \times \p^1$ or the blow-up of $\p^2$ at a point \cite{AM}, all Bridgeland moduli spaces can be related to moduli of quiver representations. Similar results are conjectured to hold for all Del Pezzo surfaces \cite{AM}.
    
    \item For an arbitrary surface $X$, stability in a special chamber coincides with Giekeser stability. See \cite{bridgelandK3} for the case of a K3 surface.
    
    \item When $X$ is a K3 or abelian surface of Picard rank 1, and $v$ is the class of certain 1-dimensional sheaves on $X$, and $\si$ is \textit{generic} with respect to $v$, meaning that it does not lie on a wall for $v$, Arcara and Bertram \cite{ABL13} construct moduli spaces $M^\si(v)$ as iterated Mukai flops of the Gieseker moduli space of class $v$.
    
    \item When $X$ is an abelian surface of Picard rank 1, Maciocia and Meachan \cite{MM} construct moduli spaces $M^\si(v)$ for certain classes $v$ of rank 1 and when $\si$ is generic for $v$ by relating $M^\si(v)$ to Gieseker moduli spaces via a Fourier-Mukai transform.
    
    \item When $X$ is a K3 surface and $\si$ is generic with respect to $v$, Bayer and Macr\`i show in \cite{BM}, generalizing similar results for K3 and abelian surfaces in \cite{mmy}, that $M^\si(v)$ is projective by relating $\si$-stability on $X$ to Gieseker stability on a Fourier-Mukai partner.
    
    \item When $X$ is an unnodal Enriques surface and $\si$ is generic with respect to $v$, Nuer shows in \cite{nuer} that the moduli space $M^\si(v)$ is projective by producing a finite map to a related Bridgeland moduli space on the K3 universal cover of $X$.
    
\end{itemize} 
While in all of these cases the projectivity of the moduli spaces ultimately rely on GIT constructions, and the line bundle $l_\si$ of Bayer and Macr\`i can be seen to be ample after the fact, a general GIT framework for Bridgeland stability is currently unavailable. Our method avoids the use of GIT and proves ampleness of $l_\si$ by directly producing enough sections. To our knowledge, this is the first example of a Bridgeland moduli space on a surface whose projectivity does not rely on GIT.

The relationship between $M^{\mathrm{Uhl}}(v)$ and $M^\si(v)$ when $\si$ lies on the vertical wall for $v$ has been previously studied by Lo in \cite{lo}, whose result together with properness of the good moduli space implies that when $X$ is a K3 surface, the good moduli space of $\si$-semistable objects is projective. Lo achieves this by relating $M^\si(v)$ to a moduli space of $\mu$-stable locally free sheaves on a Fourier-Mukai partner of $X$. Our results subsumes Lo's results and avoids the use of a Fourier-Mukai transform.

\subsection*{Open questions}
The following are potential next questions in the direction of this paper.
\begin{itemize}
    \item What is the local geometry of $M^\si(v)$, and is the morphism $M^{\mathrm{Uhl}}(v) \to M^\si(v)$ an isomorphism?
    \item What kind of birational surgery does the Gieseker moduli space undergo when we cross the vertical wall? Based on earlier work in the subject, stability on the other side of the wall should correspond to Gieseker stability under the derived dual functor.
    \item Can the methods of this paper be adapted to showing projectivity of more general Bridgeland moduli spaces on surfaces, or some other moduli spaces of sheaves or complexes on varieties? It would even be interesting to construct Gieseker moduli spaces without GIT. 
\end{itemize}

\subsection*{Acknowledgements}

The author would like to thank his advisor Jarod Alper for suggesting this problem and for constant guidance throughout the project, as well as Benjamin Schmidt, Aaron Bertram, and Max Lieblich for useful discussions. The original idea for the problem came via indirect communication from Emanuele Macr\`i.

%%%%%%%%%%%%%%%%%%%%%%%%%%%%%%%%%%%%%%%%%%%%%
%%%%%%%%%%  BRIDGELAND STABILITY %%%%%%%%%%%%
%%%%%%%%%%%%%%%%%%%%%%%%%%%%%%%%%%%%%%%%%%%%%

\section{Bridgeland stability}
In this section we recall definitions and basic notions concerning Bridgeland stability. An excellent exposition of the material is \cite{MS}. 

Let $X$ be a smooth, projective variety, and let $\Kn(X)$ denote its numerical Grothendieck group, that is, the quotient of $K(X)$ by the kernel of the Euler pairing $\chi(-,-)$. A \textbf{(numerical) stability condition} on $X$ is a pair $\si = (\sA, Z)$, where \begin{itemize}
    \item $\sA \subs D^b(X)$ is the heart of a bounded t-structure, and
    \item $Z: \Kn(X) \to \C$ is a {\bf stability function} on $\sA$, that is, a group homomorphism such that for every nonzero object $A \in \sA$, we have 
    \[ Z(A) \in \Hb = \Hh \cup \R_{<0} = \{ r e^{i\pi\phi} \in \C \;|\; r > 0, \; 0 < \phi \le 1 \}. \]
\end{itemize} 
This lets us define a notion of stability in the abelian category $\sA$: we say $A \in \sA$ is \textbf{stable} (resp. \textbf{semistable}) if for every proper nonzero subobject $A' \subs A$, we have
\[ \nu_Z(A') < \nu_Z(A) \qquad (\mathrm{resp.} \quad \nu_Z(A') \leq \nu_Z(A)), \]
where 
\[ \nu_Z(A) = \begin{cases} -\frac{\re Z(A)}{\im Z(A)} & \mathrm{if} \quad \im Z(A) > 0 \\ +\infty & \mathrm{if} \quad \im Z(A) = 0. \end{cases} \]
With this notion of stability, the pair $\si = (\sA, Z)$ must satisfy the following conditions:
\begin{enumerate}[(i)]
    \item Every nonzero $A \in \sA$ has a {\bf Harder-Narasimhan filtration}
    \[ 0 = A_0 \subsetneq A_1 \subsetneq \cdots \subsetneq A_{m-1} \subsetneq A_m = A, \]
    where each quotient $F_i = A_i/A_{i-1}$ is semistable and
    \[ \nu_Z(F_1) > \cdots > \nu_Z(F_m).  \]
    \item {\bf Support property}: there is a symmetric bilinear form $Q$ on $\Kn(X) \otimes \R$ that is negative definite on the kernel of $Z$, and $Q(A, A) \ge 0$ for every semistable object $A \in \sA$.
\end{enumerate}

The set $\Stab(X)$ of stability conditions on $X$ has a natural topology with respect to which the map 
\[ \Stab(X) \to \Hom(\Kn(X), \C), \quad (\sA, Z) \mapsto Z \]
is a local homeomorphism. Moreover, for a given numerical class $v \in \Kn(X)$ there is a locally finite collection of real codimension 1 walls inside $\Stab(X)$ such that the sets of stable and semistable objects remains constant when $\si$ varies within a connected component of the complement of the walls. 

If $C$ is a curve, an example of a stability condition on $C$ is given by $\si = (\Coh(C), -\deg + i \rk)$, giving rise to the classical Mumford slope. However, if $\dim X \ge 2$, the standard heart $\Coh(X) \subs D^b(X)$ can never be the heart of a stability condition. 

%%%%%%%%%%%%%%%%%%%%%%%%%%%%%%%%%%%%%%%%

\subsection{Stability conditions on surfaces}\label{section:stabcondsurf}
We now recall a construction of stability conditions on the derived category of a smooth, projective surface $X$ equipped with a very ample divisor $H$. This is achieved by tilting the standard heart $\Coh(X) \subs D^b(X)$ with respect to $\mu$-stability. 

Fix a real divisor class $B \in N^1(X)_\R$. The {\bf $B$-twisted Chern character} is defined by
\[ \ch^B = e^{-B}\cdot \ch, \]
with graded pieces
\[ \ch^B_0 = \ch_0 = \rk, \quad \ch^B_1 = \ch_1 - B \cdot \ch_0, \quad \ch^B_2 = \ch_2 - B \cdot \ch_1 + \frac{B^2}{2} \ch_0. \]
Define the {\bf $B$-twisted slope function} on $\Coh(X)$ by
\[ \mu_B(E) = \frac{H \cdot \ch^B_1(E)}{H^2 \cdot \ch^B_0(E)} = \frac{H \cdot \ch_1(E)}{H^2 \cdot \ch_0(E)} - \frac{H \cdot B}{H^2} \]
if $\rk(E) > 0$, and $\mu_B(E) = \infty$ if $\rk(E) = 0$, i.e. $E$ is a torsion sheaf. Note that this differs from the usual slope function $\mu$ only by the additive constant $-H\cdot B/H^2$ and hence defines the same notion of stability on $\Coh(X)$. 

Since Harder-Narasimhan filtrations into $\mu$-semistable factors exist, for every real number $\be$ we obtain a torsion pair on $\Coh(X)$ by setting
\begin{align*}
    \sT_\be & = \{ E \in \Coh(X) \;|\; \mu_B(F) > \be \mathrm{\;for\;every\;semistable\;factor\;} F \mathrm{\;of\;} E \}, \\
    \sF_\be & = \{ E \in \Coh(X) \;|\; \mu_B(F) \le \be \mathrm{\;for\;every\;semistable\;factor\;} F \mathrm{\;of\;} E \}.
\end{align*}
Thus, we obtain a heart $\Coh^\be(X) = \langle \sF_\be[1], \sT_\be \rangle \subs D^b(X)$ as the full subcategory whose objects are precisely those $E \in D^b(X)$ fitting in an exact triangle
\[ F[1] \to E \to T, \]
where $F \in \sF_\be, T \in \sT_\be$. 

For any $\al \in \R_{>0}$ we define a map $Z_{\al,\be}: \Kn(X) \to \C$ by setting
\begin{align*}
    Z_{\al, \be}(E) & = -\int_X e^{-(\be + i \al)H} \ch^B(E) \\
    & = \frac{\al^2 - \be^2}{2} H^2 \ch^B_0(E) + \be H\cdot\ch^B_1(E) - \ch^B_2(E) \\
    & \quad  + i\al(H \cdot \ch^B_1(E) - \be H^2 \ch^B_0(E)).
\end{align*}
We denote the associated slope function on $\Coh^\be(X)$ by $\nu_{\al,\be}$. It is shown in \cite{bridgelandK3} and \cite{ABL13} that the pair $\si_{\al,\be} = (\Coh^\be(X), Z_{\al,\be})$ is a stability condition on $X$. 

An object $E \in \Coh^\be(X)$ is called \textbf{polystable} with respect to $\si_{\al,\be}$ if 
\[ E \cong \bigoplus_i E_i \] 
where $E_i \in \Coh^\be(X)$ is $\si_{\al,\be}$-stable and $\nu_{\al,\be}(E_i) = \nu_{\al,\be}(E)$ for each $i$. Every $\si_{\al,\be}$-semistable object $E \in \Coh^\be(X)$ has a \textbf{Jordan-H\"older filtration}
\[ 0 = E_0 \subsetneq E_1 \subsetneq \ldots \subsetneq E_{m-1} \subsetneq E_m = E \]
where the successive quotients $E_i/E_{i-1}$ are $\si_{\al,\be}$-stable with 
\[ \nu_{\al,\be}(E_i/E_{i-1}) = \nu_{\al,\be}(E) \] 
for $i = 1,\ldots,m$. The \textbf{associated graded object} of $E$ is the direct sum 
\[ \gr(E) = \bigoplus_i E_i/E_{i-1}, \] 
unique up to noncanonical isomorphism, and two semistable objects $E$ and $E'$ are \textbf{S-equivalent} if $\gr(E) \cong \gr(E')$.

We will need the following observation. 
\begin{lem}\label{ss-ses}
    If $E \in \Coh^\be(X)$ and $\im Z_{\al,\be}(E) = 0$, then in the above triangle 
    \[ F[1] \to E \to T, \]
    $T$ has 0-dimensional support, and $F$ is a $\mu$-semistable sheaf with $\mu_B(F) \\ = \be$.
\end{lem}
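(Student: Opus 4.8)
The plan is to use the additivity of $Z_{\al,\be}$ along the triangle $F[1] \to E \to T$ together with a sign analysis of the imaginary part on the two categories $\sT_\be$ and $\sF_\be[1]$. First I would record the explicit formula
\[ \im Z_{\al,\be}(A) = \al\bigl(H \cdot \ch^B_1(A) - \be H^2 \ch^B_0(A)\bigr) \]
and rewrite it via the twisted slope: a sheaf $A$ of positive rank satisfies $\im Z_{\al,\be}(A) = \al H^2 \ch_0(A)\bigl(\mu_B(A) - \be\bigr)$, whereas a torsion sheaf $A$ satisfies $\im Z_{\al,\be}(A) = \al\, H \cdot \ch_1(A)$, which is $\ge 0$ and vanishes precisely when $A$ has $0$-dimensional support. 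Since $[E] = [T] - [F]$ in $\Kn(X)$, additivity of $Z_{\al,\be}$ yields
\[ \im Z_{\al,\be}(E) = \im Z_{\al,\be}(T) - \im Z_{\al,\be}(F), \]
and the strategy is to prove that both $\im Z_{\al,\be}(T) \ge 0$ and $-\im Z_{\al,\be}(F) \ge 0$, with the equality cases giving exactly the claimed structure.

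For the torsion part, consider the torsion filtration $0 \to T_{\mathrm{tor}} \to T \to T/T_{\mathrm{tor}} \to 0$. The torsion subsheaf contributes $\im Z_{\al,\be}(T_{\mathrm{tor}}) \ge 0$, with equality iff $T_{\mathrm{tor}}$ is $0$-dimensional; the torsion-free quotient, being a quotient of an object of $\sT_\be$, again lies in $\sT_\be$, so all its $\mu$-semistable factors have twisted slope $> \be$, forcing $\mu_B(T/T_{\mathrm{tor}}) > \be$ and hence a strictly positive imaginary part whenever $T/T_{\mathrm{tor}} \ne 0$. Therefore $\im Z_{\al,\be}(T) \ge 0$, and vanishing forces $T/T_{\mathrm{tor}} = 0$ together with $T_{\mathrm{tor}}$ being $0$-dimensional, i.e. $T$ has $0$-dimensional support.

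For the other part, $F \in \sF_\be$ has all $\mu$-semistable factors of twisted slope $\le \be$; in particular $F$ is torsion-free, and its total twisted slope $\mu_B(F)$, being a weighted average of the factor slopes, satisfies $\mu_B(F) \le \be$. Thus $\im Z_{\al,\be}(F) \le 0$, so $-\im Z_{\al,\be}(F) \ge 0$, and equality holds iff $\mu_B(F) = \be$. Since an average of quantities all $\le \be$ can equal $\be$ only when each one equals $\be$, this forces every $\mu$-semistable factor of $F$ to have slope exactly $\be$; equivalently $F$ is $\mu$-semistable with $\mu_B(F) = \be$.

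Finally, the hypothesis $\im Z_{\al,\be}(E) = 0$ together with the nonnegativity of the two summands forces each to vanish, and the equality analysis then gives the desired description of $T$ and $F$. The only real subtlety, and the point requiring care, is the bookkeeping in the equality cases: correctly isolating the torsion part of $T$ (which lies in $\sT_\be$ yet contributes to $\im Z_{\al,\be}$ exactly when its support is positive-dimensional) and using that an average of slopes all bounded by $\be$ can equal $\be$ only if every slope does. No input beyond the construction of the torsion pair and the explicit shape of $Z_{\al,\be}$ is needed.
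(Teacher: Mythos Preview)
Your proposal is correct and follows essentially the same approach as the paper: split $\im Z_{\al,\be}(E)$ additively along the triangle, observe that both $\im Z_{\al,\be}(T)$ and $-\im Z_{\al,\be}(F)$ are nonnegative, and analyze the equality cases. The only cosmetic differences are that the paper handles $T$ by a direct rank case split (positive rank vs.\ rank $0$ with $1$-dimensional support) rather than your torsion/torsion-free decomposition, and treats $F$ by writing out the Harder--Narasimhan filtration explicitly rather than phrasing it as a weighted average of slopes; the content is the same.
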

\begin{proof}
    Note that for a coherent sheaf $G$, we have $\im Z_{\al,\be}(G) > 0$ (resp. $= 0$) if and only if $\mu_B(G) > \be$ (resp. $= \be$). It follows from the construction that for any $E \in \Coh^\be(X)$,
    \[ \im Z_{\al,\be}(E) = \im Z_{\al,\be}(T) - \im Z_{\al,\be}(F) \]
    and $\im Z_{\al,\be}(T), -\im Z_{\al,\be}(F) \ge 0$. 
    
    First, if $T$ has positive rank, then by assumption $\mu_B(T) > \be$, and so $\im Z_{\al,\be}(E) > 0$. Hence $T$ must have rank 0. If the support of $T$ is 1-dimensional, then \[ \im Z_{\al,\be}(T) = \al H \cdot \ch^B_1(E) = \al H \cdot \ch_1(E) > 0 \]
    since $H$ is ample. This means that $T$ must have 0-dimensional support.
    
    Second, if $F$ is not $\mu$-semistable, then it has a Harder-Narasimhan filtration
    \[ 0 \neq F_1 \subset F_2 \subset \cdots \subset F_{m-1} \subset F_m = F \]
    with respect to $\mu$ with $m \ge 2$, and 
    \[ \im Z_{\al,\be}(F_i/F_{i-1}) \le 0 \quad \mathrm{for\;all}\; i, \quad \mathrm{and} \quad \im Z_{\al,\be}(F_m/F_{m-1}) < 0. \] 
    But then
    \[ - \im Z_{\al,\be}(F) = - \sum_{i=1}^m \im Z_{\al,\be}(F_i/F_{i-1}) > 0. \]
    Thus, $F$ must be $\mu$-semistable.
\end{proof}

\subsection{Wall-and-chamber structure}
We visualize the stability conditions $\si_{\al,\be} = (\Coh^\be(X), Z_{\al,\be})$ as living in the upper half-plane with horizontal $\be$-axis and vertical $\al$-axis. The wall-and-chamber structure in the $(\al,\be)$-plane was analyzed in \cite{maciocia} and turns out to be rather simple. If $v \in \Kn(X)$ is a class of positive rank, then there is a unique vertical wall at 
\[ \be_0 = \frac{H \cdot \ch_1^B(v)}{H^2 \ch_0(v)}, \] and on each side of the vertical wall, there is a nested sequence of semicircles with center on the $\be$-axis and contained in a largest semicircle. In particular, on either side of the vertical wall, there is an unbounded open chamber. The walls in the $(\al,\be)$-plane are pairwise disjoint, and so there is no further wall-and-chamber decomposition within each wall. In the unbounded open chamber left of the vertical wall, $\si$-stability coincides with Gieseker stability.
\begin{center}
\begin{tikzpicture}

\shadedraw[bottom color=black!30!white,top color=white,draw=white] (-4,0) arc (180:0:2.49) -- (0,0) -- (1,0) -- (1,3.5) -- (-4,3.5);

\draw[thick,->] (-4,0) -- (4,0) node[anchor=south east] {$\be$};
\draw[thick,->] (0,0) -- (0,4) node[anchor=north west] {$\al$};

\draw[thick] (1,0) -- (1,3.5);

\draw[] (0.98,0) arc (0:180:2.49);
\draw[] (0.95,0) arc (0:180:1.8);
\draw[] (0.9,0) arc (0:180:1.2);
\draw[] (0.85,0) arc (0:180:0.7);
\draw[] (0.8,0) arc (0:180:0.3);

\draw[] (1.02,0) arc (180:80:2);
\draw[] (1.05,0) arc (180:45:1.4);
\draw[] (1.1,0) arc (180:0:0.9);
\draw[] (1.15,0) arc (180:0:0.4);

\draw (-2,3) node {Gieseker chamber};
\draw (2.8,3) node {vertical wall $\be = \be_0$};
\fill (1,2) circle (0.04) node[anchor=east] {$\si$};

\end{tikzpicture}
\end{center}
Our goal is to study the moduli space of semistable objects when $\si$ lies on the vertical wall.

\subsection{Stability on the vertical wall}\label{section:stabvertwall}
In this subsection we classify stable and semistable objects on the vertical wall. Let $v \in \Kn(X)$ be a class of positive rank, and let 
\[ (\Coh^{\be_0}(X), Z_{\al,\be_0}) \in \Stab(X) \] 
be the stability condition constructed in the previous section with 
\[ \be_0 = \frac{H \cdot \ch_1^B(v)}{H^2 \ch_0(v)} \quad \mathrm{and} \quad \al > 0. \] Note that since other walls do not intersect the vertical wall, the sets of stable and semistable objects are independent of $\al$.

We first note that there are no nonzero objects in $\Coh^{\be_0}(X)$ with numerical class $v$. Namely, by Lemma \ref{ss-ses}, any object $E \in \Coh^{\be_0}(X)$ with $\im Z_{\al,\be_0}(E) = 0$ fits in a triangle
\[ F[1] \to E \to T, \]
where $T$ has 0-dimensional support and $F$ is $\mu$-semistable. But $\rk(E) = \rk(T) + \rk(F[1]) = -\rk(F) \le 0$, while $\rk(v) > 0$ by assumption. Therefore, it is convenient to instead consider the stability condition 
\[ \si = (\sA, Z), \quad \mathrm{where} \quad \sA = \Coh^{\be_0}(X)[-1], \quad Z = -Z_{\al,\be_0}. \] 
Note that this does not change the slope function $\nu = -\re Z/\im Z$. By definition, the heart $\sA$ consists of objects $E$ with fitting in a triangle
\[ F \to E \to T[-1] \]
where $F \in \sF_{\be_0}, T \in \sT_{\be_0}$.

The next proposition gives a description of stable and semistable objects in $\sA$ of slope $\nu = \infty$ with respect to $\si$. This includes objects of class $v \in \Kn(X)$. Part (i) will be crucial in the proof that the moduli space $M^\si(v)$ of semistable objects of class $v$ is projective, and part (iii) will let us identify $M^\si(v)$ with the Uhlenbeck compactification of the moduli of $\mu$-stable vector bundles, at least on the level of points.
\begin{prop}\label{ss-object-vertical-classification}
    Let 
    \[ \si = (\sA, Z), \quad \mathrm{where} \quad \sA = \Coh^{\be_0}(X)[-1], \quad Z = -Z_{\al,\be_0}. \]
    \begin{enumerate}[(i)]
        \item Any object $E \in \sA$ with $\nu(E) = \infty$ is $\si$-{\bf semistable} and fits in a triangle
        \[ F \to E \to T[-1] \]
        where $T$ is a sheaf supported in dimension 0, and $F$ is a $\mu$-semistable sheaf of slope $\mu_B(F) = \be_0$. 
        \item An object $E \in \sA$ with $\nu(E) = \infty$ is $\si$-{\bf stable} if and only if in the above triangle either $F$ is a $\mu$-stable locally free sheaf and $T = 0$, or $T = \Oh_p$ is the structure sheaf of a closed point $p \in X$ and $F = 0$.
        \item An object $E \in \sA$ of class $v$ is $\si$-{\bf polystable} if and only if
        \[ E \cong \left(\bigoplus_i F_i\right) \oplus \left(\bigoplus_j \Oh_{p_j}[-1]\right), \]
        where each $F_i$ is a $\mu$-stable locally free sheaf of slope $\mu = \mu_B(v)$, and each $p_j \in X$ is a closed point.
    \end{enumerate}
\end{prop}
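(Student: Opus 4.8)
The plan is to reduce everything to the observation that $\nu(E)=\infty$ is equivalent to $\im Z(E)=0$, and that $\infty$ is the maximal value of $\nu$ on $\sA$ (since $\im Z \ge 0$ on $\sA$). Granting this, the semistability in (i) is automatic: any subobject $A'\subs E$ satisfies $\nu(A')\le\infty=\nu(E)$. For the triangle in (i), I would note that $Z(E)=Z_{\al,\be_0}(E[1])$, so $\im Z_{\al,\be_0}(E[1])=0$ with $E[1]\in\Coh^{\be_0}(X)$; applying Lemma \ref{ss-ses} to $E[1]$ and shifting by $[-1]$ produces the triangle $F\to E\to T[-1]$ with $T$ supported in dimension $0$ and $F$ a $\mu$-semistable sheaf of slope $\mu_B(F)=\be_0$. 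Throughout I would use that for $E\in\sA$ the only nonzero cohomology sheaves are $\mathcal{H}^0(E)=F\in\sF_{\be_0}$ and $\mathcal{H}^1(E)=T\in\sT_{\be_0}$, and that a short exact sequence in $\sA$ induces the usual six-term long exact sequence of cohomology sheaves.

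For the forward direction of (ii), I would start from the short exact sequence $0\to F\to E\to T[-1]\to 0$ in $\sA$ (valid since $F,T[-1]\in\sA$). As both $F$ (a slope-$\be_0$ sheaf) and $T[-1]$ have slope $\infty$, stability forces one of $F,T$ to vanish. If $T=0$ then $E=F$, and I would argue $F$ must be $\mu$-stable and locally free: if $F$ were strictly $\mu$-semistable, a \emph{saturated} destabilizing subsheaf $F'\subsetneq F$ of slope $\be_0$ has torsion-free $\mu$-semistable quotient, so $F',F/F'\in\sF_{\be_0}$ and $F'$ is a proper subobject of slope $\infty$; and if $F$ were not locally free, the sequence $0\to F\to F^{\vee\vee}\to T_0\to 0$ with $T_0\ne 0$ rotates to an inclusion $T_0[-1]\subs F$ with quotient $F^{\vee\vee}\in\sF_{\be_0}$ — both contradicting stability. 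If instead $F=0$, then $E=T[-1]$, and any proper nonzero subsheaf of $T$ yields a proper subobject of slope $\infty$, so $T$ must be simple, i.e. $T=\Oh_p$.

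The converse direction of (ii) is where the real work lies, and I expect it to be the main obstacle. To show a $\mu$-stable locally free $F$ of slope $\be_0$ has no proper nonzero slope-$\infty$ subobject $A'$, I would write $F'=\mathcal{H}^0(A')$ and $T'=\mathcal{H}^1(A')$ and extract from the long exact sequence the four-term sequence $0\to F'\to F\to\mathcal{H}^0(F/A')\to T'\to 0$. By $\mu$-stability the injection $F'\hookrightarrow F$ is either $0$ or an isomorphism; in the isomorphism case $\mathcal{H}^0(F/A')\cong T'$ is torsion yet lies in $\sF_{\be_0}$, forcing $T'=0$ and $A'=F$. In the case $F'=0$ we have $A'=T'[-1]$, and the inclusion is a class in $\Hom_\sA(T'[-1],F)=\Ext^1(T',F)$; here I would invoke Serre duality $\Ext^1(T',F)\cong\Ext^1(F,T'\otimes\omega_X)^\vee$ together with $\Ext^1(F,T'\otimes\omega_X)=H^1(X,F^\vee\otimes T'\otimes\omega_X)=0$ (as $F$ is locally free and $T'$ is supported in dimension $0$), concluding $A'=0$. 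The delicate point is precisely this vanishing $\Ext^1(T',F)=0$: it uses local freeness essentially, and is exactly what fails — through the double-dual sequence — when $F$ is not locally free. The analogous cohomology-sheaf bookkeeping shows $\Oh_p[-1]$ is simple, hence stable.

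Finally, (iii) follows formally from (ii). For $v$ with $\be_0=\mu_B(v)$ one has $\im Z(v)=0$, so any $E$ of class $v$ satisfies $\nu(E)=\infty$, and by definition $E$ is $\si$-polystable iff it is a direct sum of $\si$-stable objects of slope $\infty$. By (ii) these summands are exactly $\mu$-stable locally free sheaves of slope $\mu_B(v)$ and structure sheaves $\Oh_{p_j}[-1]$ of closed points, which is the asserted form; conversely any such direct sum is manifestly polystable.
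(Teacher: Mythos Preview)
Your proposal is correct and closely parallels the paper's argument. Parts (i) and (iii) are handled identically, and the forward direction of (ii) matches the paper's Lemmas \ref{sigmaStablePosRkIsMuStable} and \ref{sigmaStableRk0isSkyscraper}. The one substantive difference is in the converse of (ii) for a $\mu$-stable locally free $F$, in the case $A' = T'[-1]$: you invoke part (i) for $A'$ to get $T'$ zero-dimensional, and then conclude via Serre duality that $\Ext^1(T',F)\cong H^1(X,F^\vee\otimes T'\otimes\omega_X)^\vee=0$. The paper instead works directly with the exact sequence $0 \to F \to \sH^0(F/A') \to T' \to 0$, first establishing that $T'$ is zero-dimensional by slope and rank bookkeeping, and then citing the fact (\cite[Example 1.1.16]{HL}) that the quotient of a torsion-free sheaf by a locally free subsheaf of the same rank has no zero-dimensional associated points. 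Your route is a bit slicker since applying (i) to $A'$ bypasses the slope computations; the paper's route is more elementary in that it avoids Serre duality.

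One small point: the claim ``by $\mu$-stability the injection $F' \hookrightarrow F$ is either $0$ or an isomorphism'' needs a word when $\rk F' = \rk F$, since $\mu$-stability only constrains subsheaves of strictly smaller rank. The fix is already implicit in your setup: the cokernel $F/F'$ embeds in $\sH^0(F/A') \in \sF_{\be_0}$, which is torsion-free, so if $\rk F' = \rk F$ then $F/F'$ is both torsion and torsion-free, hence zero.
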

Part (i) of Proposition \ref{ss-object-vertical-classification} follows from Lemma \ref{ss-ses} and the constructions, while part (iii) follows from part (ii) by the definition of polystability. We prove part (ii)
in a series of lemmas below. Lemmas \ref{muStableLfIsSigmaStable} and \ref{skyscraperIsSigmaStable} show that $\mu$-stable locally free sheaves and shifted skyscraper sheaves are $\si$-stable, and Lemmas \ref{sigmaStablePosRkIsMuStable} and \ref{sigmaStableRk0isSkyscraper} show the converse.
\begin{lem}\label{muStableLfIsSigmaStable}
    A $\mu$-stable locally free sheaf $E$ with $\rk(E) > 0$ and $\mu_B(E) = \be_0$ is $\si$-stable.
\end{lem}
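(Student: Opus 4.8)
The plan is to exploit the $\si$-semistability of $E$ that is already guaranteed by part (i) of Proposition \ref{ss-object-vertical-classification}, and to reduce $\si$-stability to showing that $E$ admits no proper nonzero subobject of slope $\infty$ in $\sA$. First I would record that $E$, being $\mu$-stable of slope $\be_0$, lies in $\sF_{\be_0}$ and hence defines an object of $\sA$, and that $\mu_B(E) = \be_0$ forces $\im Z(E) = 0$, so $\nu(E) = \infty$ and part (i) applies. By definition of stability, it then suffices to prove that every proper nonzero subobject $E' \subsetneq E$ in $\sA$ satisfies $\nu(E') < \infty$.

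Arguing by contradiction, I would take such an $E'$ with $\nu(E') = \infty$ and analyze it through cohomology sheaves. By part (i) of the Proposition, $E'$ sits in a triangle $F' \to E' \to T'[-1]$ with $F'$ a $\mu$-semistable sheaf of slope $\be_0$ and $T'$ supported in dimension $0$, so $\mathcal{H}^0(E') = F'$ and $\mathcal{H}^1(E') = T'$. Setting $E'' = E/E'$ and taking the long exact cohomology sequence of $0 \to E' \to E \to E'' \to 0$ (all cohomology concentrated in degrees $0,1$, as holds for any object of $\sA$), I obtain a four-term exact sequence $0 \to F' \to E \to \mathcal{H}^0(E'') \to T' \to 0$ with $\mathcal{H}^1(E'') = 0$. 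In particular $F'$ is a genuine subsheaf of $E$, and $\mathcal{H}^0(E'') \in \sF_{\be_0}$ is torsion-free.

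The argument then splits according to $F'$. Since $E$ is $\mu$-stable of slope $\be_0$ and $F'$ is $\mu$-semistable of the same slope, passing to the saturation of $F'$ in $E$ shows that $\rk F' < \rk E$ would contradict $\mu$-stability; hence either $F' = 0$, or $F'$ has full rank with $E/F'$ zero-dimensional. In the full-rank case, torsion-freeness of $\mathcal{H}^0(E'')$ forces $E/F' = 0$ and then $T' = 0$, so $E' = E$, contradicting properness. This leaves the case $F' = 0$, where the four-term sequence becomes a short exact sequence $0 \to E \to \mathcal{H}^0(E'') \to T' \to 0$ exhibiting $\mathcal{H}^0(E'')$ as an extension of the zero-dimensional sheaf $T'$ by the locally free sheaf $E$.

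The main obstacle, and the single place where local freeness of $E$ is essential, is this last case. Here I would invoke $\mathrm{Ext}^1(T', E) = 0$: since $E$ is locally free and $T'$ is supported in codimension two on the surface, one has $\mathcal{E}xt^q(T', E) \cong \mathcal{E}xt^q(T', \Oh_X) \otimes E = 0$ for $q = 0,1$, and the local-to-global spectral sequence yields the vanishing. Thus the extension splits, $\mathcal{H}^0(E'') \cong E \oplus T'$, and torsion-freeness of $\mathcal{H}^0(E'')$ forces $T' = 0$, hence $E' = T'[-1] = 0$, contradicting nonzeroness. Consequently no proper nonzero subobject of $E$ has slope $\infty$, so $E$ is $\si$-stable. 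I expect the cohomology bookkeeping and the saturation/slope comparison to be routine; the genuinely load-bearing step is the $\mathrm{Ext}^1$-vanishing, which is exactly what fails for a non-locally-free torsion-free sheaf and thereby explains why such sheaves are only semistable rather than stable, in accordance with part (ii).
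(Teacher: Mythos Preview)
Your proof is correct and follows essentially the same strategy as the paper: take a subobject of slope $\infty$, pass to the long exact cohomology sequence, and case-split on $\sH^0$ of the subobject, with local freeness of $E$ being decisive in the case $\sH^0 = 0$. Your early invocation of part (i) of Proposition~\ref{ss-object-vertical-classification} streamlines the case analysis (the paper derives the structure of the subobject and quotient by hand via slope inequalities), and your $\Ext^1(T',E)=0$ argument is an equivalent reformulation of the paper's appeal to \cite[Example 1.1.16]{HL}; one small imprecision is that in the full-rank case $E/F'$ is a priori only torsion rather than zero-dimensional, but since $\sH^0(E'')$ is torsion-free this is harmless.
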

\begin{proof}
    Let $F \hookrightarrow E$ be an inclusion in $\sA$ with $\nu(F) = \infty$. We must show $F = 0$ or $F = E$. The induced short exact sequence
    \[ 0 \to F \to E \to G \to 0 \]
    is by definition an exact triangle in $D^b(X)$ with each vertex in $\sA$. Cohomology with respect to the standard t-structure leads to an exact sequence
    \[ 0 \to \sH^0(F) \to E \to \sH^0(G) \to \sH^1(F) \to 0 \to \sH^1(G) \to 0 \]
    of sheaves. This immediately implies that $\sH^1(G) = 0$, i.e. $G = \sH^0(G)$, and that $\sH^0(F)$ is a subsheaf of $E$.
    
    We have three cases.
    \begin{itemize}
        \item If $\sH^0(F) \xrightarrow{\sim} E$, then $G \xrightarrow{\sim} \sH^1(F)$. But $G \in \sF_{\be_0}, \sH^1(F) \in \sT_{\be_0}$, so we must have $G = \sH^1(F) = 0$, hence $F = \sH^0(F) = E$.
        
        \item If $\sH^0(F)$ is a proper, nonzero subsheaf of $E$, then by the assumption on $E$ we have $\mu_B(\sH^0(F)) < \mu_B(E) = \be_0$. Let $N$ denote the image of the map $E \to G$, so that we have the short exact sequences
        \[ 0 \to \sH^0(F) \to E \to N \to 0 \]
        and 
        \[ 0 \to N \to G \to \sH^1(F) \to 0. \]
        By assumption, $\nu(G) = \infty$ so $G$ is $\mu$-semistable, and thus $\mu_B(N) \le \mu_B(G)$. This gives the absurd inequality 
        \[ \be_0 = \mu_B(E) < \mu_B(N) \le \mu_B(G) \le \mu_B(\sH^1(F)) \le \be_0. \]
        Thus, this case is impossible.
        
        \item If $\sH^0(F) = 0$, then $F = \sH^1(F)[-1]$. Denote $F' = \sH^1(F)$. The short exact sequence
        \[ 0 \to E \to G \to F' \to 0 \]
        implies 
        \[ H\cdot \ch^B_1(G) = H\cdot \ch^B_1(E) + H\cdot \ch^B_1(F'). \]
        Assume for contradiction that $\rk(F') > 0$. Since $\rk(G) \ge \rk(E) > 0$ by assumption, from the above equality and the definition of $\mu_B$ we obtain
        \begin{align*} \be_0 \rk(E) + \mu_B(F') \rk(F') & = \mu_B(E) \rk(E) + \mu_B(F') \rk(F') \\
        & = \mu_B(G) \rk(G) \\
        & \le \be_0 \rk(G), \end{align*}
        so that
        \[ \mu_B(F') \rk(F') \le \be_0(\rk(G) -  \rk(E)) = \be_0 \rk(F'). \]
        However, since $\mu_B(F') > \be_0$, this inequality is impossible. Thus, $\rk(F') = 0$, which also implies $\rk(E) = \rk(G)$ and $\ch^B_1(F') = \ch_1(F')$. 
        
        Next, assume for contradiction that $F'$ has 1-dimensional support. Since $H$ is ample, this implies $H \cdot \ch^B_1(F') > 0$. But on the other hand, 
        \[ \mu_B(G) H^2 \rk(G) = \mu_B(E) H^2 \rk(E) + H \cdot \ch^B_1(F'), \]
        so that
        \[ H \cdot \ch^B_1(F') = H^2 (\mu_B(G) - \be_0)\rk(E) \le 0 \]
        since by assumption $\mu_B(G) \le \be_0$, again a contradiction. Thus, $F'$ has 0-dimensional support. Now if $F' \neq 0$, then we have a locally free subsheaf $E$ of a torsion-free sheaf $G$ with 0-di\-men\-sion\-al quotient $F'$. But as mentioned in \cite[Example 1.1.16]{HL}, the quotient $G/E$ has no 0-dimensional associated points. Thus, we must have $F' = 0$.
    \end{itemize} 
\end{proof}

\begin{lem}\label{skyscraperIsSigmaStable}
    The shifted skyscraper sheaf $\Oh_p[-1]$ is $\si$-stable for every closed point $p \in X$.
\end{lem}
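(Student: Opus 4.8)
The plan is to show that the shifted skyscraper sheaf $\Oh_p[-1]$ admits no proper nonzero subobjects in $\sA$, from which stability is immediate (vacuously). Since $\nu(\Oh_p[-1]) = \infty$, I must check that any subobject $F \hookrightarrow \Oh_p[-1]$ in $\sA$ is either $0$ or all of $\Oh_p[-1]$. As in the proof of Lemma \ref{muStableLfIsSigmaStable}, I would start from a short exact sequence $0 \to F \to \Oh_p[-1] \to G \to 0$ in $\sA$, interpret it as an exact triangle in $D^b(X)$, and take the long exact sequence of standard cohomology sheaves.

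First I would extract the cohomology sequence. The object $\Oh_p[-1]$ has $\sH^{-1} = \Oh_p$ and all other cohomology vanishing, so the long exact sequence reads
\[ 0 \to \sH^{-1}(F) \to \Oh_p \to \sH^{-1}(G) \to \sH^0(F) \to 0 \to \sH^0(G) \to 0. \]
Here I am using that objects of $\sA = \Coh^{\be_0}(X)[-1]$ are concentrated in cohomological degrees $-1$ and $0$, with $\sH^{-1} \in \sF_{\be_0}$ and $\sH^0 \in \sT_{\be_0}$. The sequence forces $\sH^0(G) = 0$, so $G = \sH^{-1}(G)[1]$, and it exhibits $\sH^{-1}(F)$ as a subsheaf of $\Oh_p$.

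Next I would use the structure of $\Oh_p$, whose only subsheaves are $0$ and $\Oh_p$ itself. If $\sH^{-1}(F) = \Oh_p$, then the map $\Oh_p \to \sH^{-1}(G)$ is zero, forcing $\sH^0(F) \cong \sH^{-1}(G)$; but $\sH^0(F) \in \sT_{\be_0}$ and $\sH^{-1}(G) \in \sF_{\be_0}$, so both vanish, giving $F \cong \Oh_p[-1]$ and hence $F = \Oh_p[-1]$. If instead $\sH^{-1}(F) = 0$, then $F = \sH^0(F)$ is a torsion sheaf in $\sT_{\be_0}$ sitting inside the quotient $\sH^{-1}(G)$ of $\Oh_p$, which is a length-one sheaf supported at $p$; I would then observe that $\sH^0(F)$, being a torsion sheaf of positive rank-zero type, has $\im Z_{\al,\be_0} > 0$ unless $\sH^0(F) = 0$, contradicting $\nu(F) = \infty$ unless $F = 0$.

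The main obstacle, such as it is, lies in the middle case where $\sH^{-1}(F)=0$ but $F$ is a nonzero torsion subsheaf: I must rule this out cleanly using the slope condition $\nu(F)=\infty$. By Lemma \ref{ss-ses} applied to the subobject $F$, any object of $\sA$ with $\nu = \infty$ that is purely $\sH^0$ (hence a torsion sheaf) must have $0$-dimensional support with $\im Z = 0$; but a nonzero $0$-dimensional sheaf $F$ has $\im Z_{\al,\be_0}(F) = 0$ and $\re Z_{\al,\be_0}(F) = \ch_2(F) > 0$, so $\nu(F) \ne \infty$, forcing $F = 0$. This dispatches the remaining case and completes the verification that $\Oh_p[-1]$ has no proper nonzero subobjects, so it is $\si$-stable.
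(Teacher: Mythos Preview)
Your overall strategy---take the long exact sequence of standard cohomology and exploit the torsion-pair structure---is the same as the paper's, but you have the cohomological degrees wrong, and this is not a harmless relabeling.

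Objects of $\sA = \Coh^{\be_0}(X)[-1]$ are concentrated in degrees $0$ and $1$, not $-1$ and $0$: an $E \in \sA$ sits in a triangle $F \to E \to T[-1]$ with $F \in \sF_{\be_0}$, $T \in \sT_{\be_0}$, so $\sH^0(E) = F \in \sF_{\be_0}$ and $\sH^1(E) = T \in \sT_{\be_0}$. In particular $\sH^1(\Oh_p[-1]) = \Oh_p$, not $\sH^{-1}(\Oh_p[-1]) = \Oh_p$. Your own setup is internally inconsistent here: you place $\Oh_p$, a torsion sheaf, in the slot you have just declared to belong to $\sF_{\be_0}$, which contains only torsion-free sheaves.

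With the correct indices, the long exact sequence coming from $0 \to F \to \Oh_p[-1] \to G \to 0$ in $\sA$ reads
\[ 0 \to \sH^0(F) \to 0 \to \sH^0(G) \to \sH^1(F) \to \Oh_p \to \sH^1(G) \to 0, \]
so automatically $\sH^0(F) = 0$ and $F = \sH^1(F)[-1]$ with $\sH^1(F) \in \sT_{\be_0}$. The dichotomy is then on the map $\sH^1(F) \to \Oh_p$ (zero or surjective), and one concludes by using that $\sH^0(G) \in \sF_{\be_0}$ while $\sH^1(F) \in \sT_{\be_0}$, together with the fact that a subsheaf of $\Oh_p$ already forces equal rank and first Chern class. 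Your case analysis, built on ``$\sH^{-1}(F)$ is a subsheaf of $\Oh_p$'', is based on the wrong sequence and does not survive the correction.

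Your final paragraph compounds the problem: a nonzero $0$-dimensional sheaf sitting in degree $0$ is not an object of $\sA$ at all (it lies in $\sT_{\be_0} \subset \Coh^{\be_0}(X)$, so only its $[-1]$-shift lies in $\sA$), and for $\Oh_p[-1] \in \sA$ one has $Z(\Oh_p[-1]) \in \R_{<0}$, i.e.\ $\nu = \infty$. So the proposed contradiction ``$\nu(F) \ne \infty$'' never materializes.
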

\begin{proof}
    Let $F \hookrightarrow \Oh_p[-1]$ be an inclusion in $\sA$ with $\nu(F) = \infty$. We must show $F = 0$ or $F = \Oh_p[-1]$. Like above, the induced short exact sequence
    \[ 0 \to F \to \Oh_p[-1] \to G \to 0 \]
    in $\sA$ yields the exact sequence
    \[ 0 \to \sH^0(G) \to F \to \Oh_p \to \sH^1(G) \to 0 \]
    of sheaves, and $F = \sH^1(F)$. If $\Oh_p \xrightarrow{\sim} \sH^1(G)$, then $\sH^0(G) \cong F$, and since $\sH^0(G) \in \sF_\be, F \in \sT_\be$, we have $F = 0$.
    
    If on the other hand $\sH^1(G) = 0$, then $G = \sH^0(G)$, and the short exact sequence
    \[ 0 \to G \to F \to \Oh_p \to 0 \]
    implies that $\mu_B(G) = \mu_B(F)$, and we once again see that $G = F = 0$, a contradiction.
\end{proof}

%mmmmmme....................rrrrrrrrrrrrrrrr        <------ Fennel, July 8, 2020. (Fennel is two-month old kitten.)

\begin{lem}\label{sigmaStablePosRkIsMuStable}
    If $E \in \sA$ is a $\si$-stable object with $\rk(E) > 0$ and $\nu(E) = \infty$, then $E$ is a $\mu$-stable locally free sheaf.
\end{lem}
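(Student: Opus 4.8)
The plan is to first reduce to the case where $E$ is an honest sheaf, and then to establish $\mu$-stability and local freeness in turn, each time using $\si$-stability to rule out an explicit destabilizing subobject in $\sA$.

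First I would invoke part (i) of Proposition \ref{ss-object-vertical-classification} (equivalently Lemma \ref{ss-ses}) to write $E$ in a short exact sequence $0 \to F \to E \to T[-1] \to 0$ in $\sA$, where $T$ is $0$-dimensional and $F$ is a $\mu$-semistable sheaf with $\mu_B(F) = \be_0$. Since $T[-1]$ has rank $0$, the hypothesis $\rk(E) > 0$ forces $\rk(F) = \rk(E) > 0$, so $F \neq 0$; moreover $F, T[-1] \in \sA$, so this is genuinely a short exact sequence in the heart and $F \hookrightarrow E$ is a subobject. Both $F$ and $T[-1]$ have $\nu = \infty$, since a $\mu$-semistable sheaf of slope $\be_0$ and a shift of a $0$-dimensional sheaf both have vanishing $\im Z$. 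Thus $F$ is a nonzero subobject with $\nu(F) = \infty = \nu(E)$, and $\si$-stability forces $F = E$. Consequently $T = 0$ and $E = F$ is a torsion-free (as $F \in \sF_{\be_0}$), $\mu$-semistable sheaf with $\mu_B(E) = \be_0$.

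Next I would upgrade $\mu$-semistability to $\mu$-stability. Suppose $E$ were strictly $\mu$-semistable; then it admits a saturated subsheaf $E' \subs E$ with $0 < \rk(E') < \rk(E)$, $\mu_B(E') = \be_0$, and torsion-free quotient $E/E'$. Because $E$ is $\mu$-semistable of slope $\be_0$, both $E'$ and $E/E'$ are $\mu$-semistable of slope $\be_0$, hence lie in $\sF_{\be_0} \subs \sA$ with $\nu = \infty$. Therefore the sheaf sequence $0 \to E' \to E \to E/E' \to 0$ is a short exact sequence in $\sA$, exhibiting $E'$ as a proper nonzero subobject of $E$ of the same slope $\nu = \infty$---contradicting $\si$-stability. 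Hence $E$ is $\mu$-stable. For local freeness I would compare $E$ with its double dual: on the smooth surface $X$ the reflexive sheaf $E^{\vee\vee}$ is locally free, and there is a short exact sequence $0 \to E \to E^{\vee\vee} \to Q \to 0$ with $Q$ supported in dimension $0$, where $E^{\vee\vee}$ is $\mu$-semistable of slope $\be_0$ and so lies in $\sF_{\be_0} \subs \sA$ with $\nu = \infty$. Rotating yields the triangle $Q[-1] \to E \to E^{\vee\vee}$, a short exact sequence $0 \to Q[-1] \to E \to E^{\vee\vee} \to 0$ in $\sA$. If $E$ were not locally free then $Q \neq 0$, and $Q[-1]$ would be a proper nonzero subobject of $E$ with $\nu = \infty$, once more contradicting $\si$-stability; hence $Q = 0$ and $E \cong E^{\vee\vee}$ is locally free.

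The routine ingredients are the standard facts that reflexive sheaves on a smooth surface are locally free and that $E^{\vee\vee}/E$ is $0$-dimensional for torsion-free $E$, together with the slope bookkeeping showing the relevant sheaves have $\nu = \infty$. The point requiring the most care---and the real crux---is verifying in each step that the candidate sheaf-theoretic sub- and quotient objects actually lie in the tilted heart $\sA$, so that a short exact sequence of coherent sheaves promotes to a short exact sequence in $\sA$; once membership in $\sA$ together with $\nu = \infty$ is confirmed, $\si$-stability immediately delivers the contradiction.
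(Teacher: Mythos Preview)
Your proof is correct and follows essentially the same approach as the paper: use the triangle $F \to E \to T[-1]$ to reduce to a $\mu$-semistable sheaf, then use the double-dual sequence and a destabilizing subsheaf of equal slope to force local freeness and $\mu$-stability. The only cosmetic differences are that the paper establishes local freeness before $\mu$-stability (you do the reverse) and uses the first step of a Jordan--H\"older filtration rather than an arbitrary saturated subsheaf of equal slope; neither change affects the argument.
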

\begin{proof}
    The object $E$ fits in an exact triangle
    \[ F \to E \to T[-1] \]
    where $F$ is a $\mu$-semistable torsion-free sheaf with $\mu_B(F) = \be_0$ and $T$ is a 0-dimensional sheaf. If $T \neq 0$, then $F$ is a destabilizing subobject of $E$ in $\sA$ unless $F = 0$, in which case $\rk(E) = -\rk(T) = 0$, contrary to the assumption. Thus, $T = 0$ and $E$ is a $\mu$-semistable sheaf.
    
    We next show that $E$ is locally free. Since $E$ is torsion-free, the canonical evaluation map $E \to E^{\vee \vee}$ is injective with cokernel $Q$ supported in dimension 0. Now $E^{\vee\vee}$ and $Q[-1]$ both lie in the heart $\sA$, so the short exact sequence 
    \[ 0 \to E \to E^{\vee \vee} \to Q \to 0 \]
    of coherent sheaves gives an exact sequence
    \[ 0 \to Q[-1] \to E \to E^{\vee \vee} \to 0 \]
    in $\sA$. Since $\nu(Q[-1]) = \nu(E) = \infty$ and $E$ is stable, we must have $Q = 0$, and so $E \cong E^{\vee\vee}$ is locally free.
    
    To show that $E$ is $\mu$-stable, let
    \[ 0 \subset E_1 \subset \cdots \subset E_r = E \]
    be a Jordan-H\"older filtration into $\mu$-stable factors. If $r > 1$, then $E/E_1$ is a $\mu$-semistable sheaf with $\mu_B(E/E_1) = \be_0$, so the short exact sequence of sheaves
    \[ 0 \to E_1 \to E \to E/E_1 \to 0 \]
    is also a short exact sequence in $\sA$ of objects with $\nu = \infty$, which contradicts the $\si$-stability of $E$. Thus, $E = E_1$ is $\mu$-stable.
\end{proof}

\begin{lem}\label{sigmaStableRk0isSkyscraper}
    If $E \in \sA$ is $\si$-stable with $\rk(E) = 0$ and $\nu(E) = \infty$, then $E = \Oh_p[-1]$ for some closed point $p \in X$.
\end{lem}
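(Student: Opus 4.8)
The plan is to first use part (i) of Proposition \ref{ss-object-vertical-classification} to reduce $E$ to a shifted $0$-dimensional sheaf, and then exploit $\si$-stability to force that sheaf to have length $1$.

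First I would invoke part (i): since $\nu(E) = \infty$, the object $E$ fits in a triangle
\[ F \to E \to T[-1], \]
where $T$ is supported in dimension $0$ and $F$ is a $\mu$-semistable sheaf with $\mu_B(F) = \be_0$. Taking ranks gives $\rk(E) = \rk(F) - \rk(T) = \rk(F)$, since $T$ is torsion. By hypothesis $\rk(E) = 0$, so $\rk(F) = 0$; but a nonzero sheaf of rank $0$ is a torsion sheaf and has $\mu_B = \infty \ne \be_0$, so $F = 0$. Hence $E \cong T[-1]$ for a nonzero $0$-dimensional sheaf $T$.

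Next I would show that $T$ is simple. Suppose $T' \subsetneq T$ were a proper nonzero subsheaf, with quotient $T'' = T/T' \ne 0$. Both $T'$ and $T''$ are $0$-dimensional, hence lie in $\sT_{\be_0}$, so $T'[-1]$ and $T''[-1]$ lie in $\sA$. Rotating the short exact sequence $0 \to T' \to T \to T'' \to 0$ then shows that
\[ 0 \to T'[-1] \to T[-1] \to T''[-1] \to 0 \]
is a short exact sequence in $\sA$, exhibiting $T'[-1]$ as a proper nonzero subobject of $E$. Since $T'$ is $0$-dimensional, $Z(T'[-1]) = Z_{\al,\be_0}(T') = -\ch_2(T') \in \R_{<0}$, so $\im Z(T'[-1]) = 0$ and $\nu(T'[-1]) = \infty = \nu(E)$. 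This contradicts the $\si$-stability of $E$, which requires every proper nonzero subobject to have strictly smaller slope.

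Therefore $T$ admits no proper nonzero subsheaf. A nonzero $0$-dimensional sheaf with this property has length $1$, since any such sheaf of length $\ge 2$ contains a skyscraper $\Oh_p$ as a proper subsheaf, and a length-$1$ sheaf is $\Oh_p$ for a unique closed point $p \in X$. Thus $E \cong \Oh_p[-1]$, as claimed. The argument is short; the only points requiring care are the slope bookkeeping under the shift $[-1]$ and the verification that the rotated sheaf sequence is genuinely exact in the tilted heart $\sA$. Once these are in place, the stability argument is immediate.
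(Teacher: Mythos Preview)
Your proof is correct and follows essentially the same approach as the paper: use the triangle from part (i) to force $F=0$ via the rank computation, then show that any proper nonzero subsheaf $T'\subset T$ yields a destabilizing subobject $T'[-1]\subset E$ in $\sA$, forcing $T$ to have length $1$. You supply a little more detail than the paper (the explicit verification that the rotated sequence is exact in $\sA$ and the computation of $Z(T'[-1])$), but the argument is the same.
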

\begin{proof}
    From the triangle
    \[ F \to E \to T[-1] \]
    as above, we get 
    \[ 0 = \rk(E) = \rk(F) - \rk(T) = \rk(F), \]
    so since $F$ is torsion-free, we must have $F = 0$, and $E = T[-1]$ is the shift of a 0-dimensional sheaf. But any proper subsheaf $T' \subs T$ is also 0-dimensional, so $T'[-1] \in \sA$ is a destabilizing subobject of $E$ with respect to $\si$. Thus, $T$ must have length 1, and so $T = \Oh_p$ for some $p \in X$.
\end{proof}

\begin{rmk}
Proposition \ref{ss-object-vertical-classification} can also be deduced from \cite[Proposition 2.2]{huy} as follows. Any stable object $E \in \sA$ is minimal, since a nonzero surjection $E \twoheadrightarrow E'$ in $\sA$ implies $\nu(E') > \nu(E)$ unless $E' = E$. Conversely any minimal object is automatically stable. Although \cite[Proposition 2.2]{huy} is stated in the case when $X$ is a K3 surface, the proof works for any surface.
\end{rmk}

%%%%%%%%%%%%%%%%%%%%%%%%%%%%%%%%%%%%%%%%%%%%%
%%%%%%  MODULI OF SEMISTABLE OBJECTS  %%%%%%%
%%%%%%%%%%%%%%%%%%%%%%%%%%%%%%%%%%%%%%%%%%%%%

\section{Moduli of semistable objects}
In this section we overview some definitions and results regarding moduli spaces of Bridgeland semistable objects. 

\subsection{Moduli stacks}
Let $X$ be a smooth, projective surface over $\C$ with a very ample divisor $H$, let $v \in \Kn(X)$ be a numerical class, and let $\si = (\sA, Z) \in \Stab(X)$ be a Bridgeland stability condition on $X$.

Define a category fibered in groupoids $\sM^\si(v)$ over the big \'etale site of $\C$-schemes as follows. The objects of $\sM^\si(v)$ are pairs $(S, \sE)$, where $S$ is a scheme over $\C$, and $E \in D^b(S \times X)$ is a complex of coherent sheaves relatively perfect over $S$, and whenever $S$ is of finite type over $\C$, for every closed point $s \in S$, the derived restriction of $E$ to the fiber $\{s\} \times X \cong X$ lies in $\sA$, is $\si$-semistable, and has numerical class $v$. A morphism $(S', \sE') \to (S, \sE)$ in $\sM^\si(v)$ is a pair $(f, f^\sharp)$, where $f: S' \to S$ is a morphism of $\C$-schemes, and $f^\sharp: f_* \sE' \to \sE$ is a morphism in $D^b(S \times X)$ whose adjoint is an isomorphism $\sE' \xrightarrow{\sim} f^* \sE$ in $D^b(S' \times X)$.

If $\si$ is obtained by tilting with respect to $\mu$-stability as in Section \ref{section:stabcondsurf}, then based on work in \cite{lie06}, \cite{ABL13}, and \cite{AP06}, it is proved in \cite{toda08} that $\sM^\si(v)$ is an algebraic stack of finite type over $\C$. 

\subsection{Good moduli spaces}
A good moduli space is a generalization to algebraic stacks of the usual coarse moduli space associated to a Deligne-Mumford stack or a gerbe. In a sense, a good moduli space is a scheme or algebraic space that most closely approximates an algebraic stack. Based on ideas from Geometric Invariant Theory, Alper gave the definition and developed basic properties of good moduli spaces in \cite{AlperGMS}.

Let $\sM$ be an algebraic stack. A quasi-compact, quasi-separated morphism $\pi: \sM \to M$ to an algebraic space $M$ is called a {\bf good moduli space}, if 
\begin{enumerate}[(i)]
    \item the pushforward functor $\pi_*: \Qcoh(\sM) \to \Qcoh(M)$ is exact, and
    \item the natural map $\Oh_M \to \pi_*\Oh_\sM$ is an isomorphism.
\end{enumerate}
We list a few basic properties of good moduli spaces.
\begin{prop}
    If $\pi: \sM \to M$ is a good moduli space, then the following hold. \begin{itemize}
        \item $\pi$ is surjective and universally closed.
        \item $\pi$ induces a bijection of closed points.
        \item $\pi$ is universal for maps to algebraic spaces.
        \item For every geometric point $x: \Spec \overline{k} \to \sM$ with closed image, the stabilizer group $G_x$ is linearly reductive.
        \item If $\sM$ is locally Noetherian, then so is $M$, and $\pi_*$ preserves coherence.
        \item If $\sM$ is of finite type over a field, then so is $M$.
    \end{itemize}
\end{prop}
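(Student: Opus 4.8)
The plan is to observe that all six assertions are foundational properties of good moduli spaces established by Alper in \cite{AlperGMS}, and to indicate for each the mechanism of proof; there is no new content to verify here, so I would present the proposition as a recollection rather than a fresh argument. The one structural input underlying everything is the definition itself, namely exactness of $\pi_*$ together with the isomorphism $\Oh_M \xrightarrow{\sim} \pi_*\Oh_\sM$. A recurring device is to reduce a statement to the affine model $[\Spec A/G] \to \Spec A^G$ with $G$ linearly reductive, where each assertion becomes a classical fact of Geometric Invariant Theory; this reduction is legitimate because the formation of a good moduli space commutes with arbitrary base change and may be checked \'etale-locally on $M$.

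For surjectivity and universal closedness I would cite the main list of properties in \cite{AlperGMS}: universal closedness follows from exactness of $\pi_*$, since the pushforward of the ideal sheaf of a closed substack is again an ideal sheaf, forcing the image to be closed, and this persists under base change; surjectivity then follows as $\pi$ is dominant and closed. The bijection on closed points is proved in the same place: two closed points of $\sM$ have the same image exactly when their closures meet, and exactness of $\pi_*$ separates disjoint closed substacks, so closed points inject into $M$, while surjectivity on closed points is the closed-point refinement of surjectivity. In the affine model this recovers the GIT statement that distinct closed orbits have distinct images.

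The universal property, that $\pi$ is initial among morphisms from $\sM$ to algebraic spaces, is proved in \cite{AlperGMS} using the identity $\Oh_M \cong \pi_*\Oh_\sM$: any morphism $\sM \to Y$ to an algebraic space factors uniquely through $M$ because the comparison on structure sheaves is forced by this identity and then glues. Linear reductivity of the stabilizer $G_x$ at a geometric point with closed image is also established there; here the hypothesis that we work over $\C$ enters, since in characteristic $0$ reductive and linearly reductive coincide, and a closed point admitting a good moduli space necessarily has reductive automorphisms. Finally, the Noetherian and coherence statements follow from exactness of $\pi_*$ together with a finiteness argument for the pushforward of a coherent sheaf, and finite type over a field is deduced by combining coherence of $\pi_*\Oh_\sM$ with surjectivity of $\pi$.

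The only point requiring genuine care, though it is entirely standard, is the \'etale-local reduction to the affine GIT model: it is this local structure theory that makes the classical GIT statements applicable, after which each property is a formal consequence of the exactness of $\pi_*$ and the structure-sheaf identity. I do not expect any real obstacle, precisely because the proposition is a summary of \cite{AlperGMS}; the value of stating it here is to fix the properties of $\pi$ that will be used later in identifying $M^\si(v)$ and proving its projectivity.
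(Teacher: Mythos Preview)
Your proposal is correct and matches the paper's treatment: the paper states this proposition without proof, simply as a recollection of foundational results from \cite{AlperGMS}, exactly as you propose. One small caution on your sketched mechanisms: the \'etale-local reduction to $[\Spec A/G] \to \Spec A^G$ is the later local structure theorem and is not how the original proofs in \cite{AlperGMS} proceed, so if you include any indication of method you should stick to the direct arguments from exactness of $\pi_*$ and $\Oh_M \cong \pi_*\Oh_\sM$ rather than invoking that reduction.
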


We recall the following criterion \cite[Theorem 10.3]{AlperGMS} for a locally free sheaf $\sF$ on $\sM$ to descend to the good moduli space $M$. 
\begin{prop}\label{vbtogms}
    If $\pi: \sM \to M$ is a good moduli space and $\sM$ is locally Noetherian, then the pullback morphism $\pi^*: \Coh(M) \to \Coh(\sM)$ induces an equivalence of categories between locally free sheaves on $M$ and those locally free sheaves $\sF$ on $\sM$ such that for every geometric point $x: \Spec k \to \sM$ with closed image, the induced representation $x^*\sF$ of the stabilizer $G_x$ is trivial.
\end{prop}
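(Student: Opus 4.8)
The plan is to exhibit the functor $\sF\mapsto\pi_*\sF$ as a quasi-inverse to $\pi^*$ and to check that the unit and counit of the $(\pi^*,\pi_*)$-adjunction are isomorphisms on the two categories in question. I would first dispatch full faithfulness of $\pi^*$. Condition (i) (exactness of $\pi_*$) yields the projection formula $\pi_*(\sE\otimes\pi^*\sG)\cong\pi_*\sE\otimes\sG$ for locally free $\sG$ on $M$, by reduction to the case $\sG=\Oh_M^{\oplus n}$; combined with the isomorphism $\Oh_M\xrightarrow{\sim}\pi_*\Oh_\sM$ of condition (ii), this gives $\pi_*\pi^*\sG\cong\sG$ for every locally free $\sG$. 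Hence by adjunction $\Hom_\sM(\pi^*\sF,\pi^*\sG)\cong\Hom_M(\sF,\pi_*\pi^*\sG)\cong\Hom_M(\sF,\sG)$ for $\sF,\sG$ locally free, so $\pi^*$ is fully faithful. It also lands in the prescribed subcategory: for a geometric point $x$ with closed image, $x^*\pi^*\sG=(\pi\circ x)^*\sG$ factors through a point of $M$, whose automorphism group is trivial, so the induced $G_x$-representation is trivial.

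The substance is essential surjectivity. Given $\sF$ locally free on $\sM$ with trivial $G_x$-representation at every closed point $x$, I set $\sG:=\pi_*\sF$, which is coherent because $\sM$ is locally Noetherian, and must show that $\sG$ is locally free and that the counit $\pi^*\pi_*\sF\to\sF$ is an isomorphism. Both statements are local on $M$, so I fix a closed point $\bar x\in M$, the image of a closed point $x\in\sM$ with linearly reductive stabilizer $G_x$, and pass to a local quotient presentation $[\Spec A/G_x]$ of $\sM$ near $\bar x$, valid étale-locally by a Luna-type slice theorem or after completion, in which $A^{G_x}$ is the local ring of $M$ at $\bar x$ and the closed orbit is the $G_x$-fixed point cut out by a maximal ideal $\mathfrak m\subs A$. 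Under this presentation $\sF$ corresponds to a finite projective $A$-module $P$ with a compatible $G_x$-action, $\pi_*\sF$ to the invariants $P^{G_x}$, and the fiber $x^*\sF$ to the $G_x$-representation $V=P/\mathfrak mP$, which is trivial by hypothesis.

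The remaining computation is purely equivariant-algebraic. Since $G_x$ is linearly reductive, taking invariants is exact, so the surjection $P\twoheadrightarrow V$ yields a surjection $P^{G_x}\twoheadrightarrow V^{G_x}=V$, the last equality because $V$ is trivial. Lifting a basis of $V$ to invariant elements $p_1,\dots,p_r\in P^{G_x}$, Nakayama's lemma shows the $p_i$ generate $P$ over the (semi)local ring $A$; as $P$ is projective of rank $r=\rk\sF$, they form a free basis, and being $G_x$-invariant they identify $P\cong A^{\oplus r}$ with the action induced from $A$. Consequently $P^{G_x}\cong(A^{G_x})^{\oplus r}$ is free and the counit $A\otimes_{A^{G_x}}P^{G_x}\to P$ is an isomorphism. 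Since these conclusions hold after étale base change or completion at every closed point of $M$, faithfully flat descent shows $\pi_*\sF$ is locally free and $\pi^*\pi_*\sF\to\sF$ is an isomorphism.

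I expect the main obstacle to be the reduction to the local quotient presentation. Because a good moduli space morphism is in general non-flat, local freeness of $\pi_*\sF$ cannot be descended naively along $\pi$; one genuinely needs the linear reductivity of the stabilizers together with a local model carrying a single closed, $G_x$-fixed orbit. Securing this structure — via Luna's slice theorem in the smooth case, or the étale-local description of good moduli spaces and a completion argument in general — is the substantive input, after which the trivial-fiber hypothesis and exactness of invariants make the descent elementary.
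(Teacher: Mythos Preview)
The paper does not prove this proposition; it is simply quoted as \cite[Theorem 10.3]{AlperGMS} with no argument given. So there is no ``paper's own proof'' to compare against, and your proposal stands on its own merits.

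Your outline is sound in spirit and matches the standard strategy. Full faithfulness via the projection formula and condition (ii) is fine. For essential surjectivity, the equivariant-algebraic computation at a closed point is exactly right once you have the local model $[\Spec A/G_x]$: linear reductivity makes invariants exact, the trivial-fiber hypothesis lets you lift a basis to invariants, and Nakayama plus projectivity finishes.

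The point you correctly flag as the obstacle is also where the sketch is thinnest. The \'etale-local quotient presentation you invoke is the Alper--Hall--Rydh local structure theorem, which requires the stack to be locally of finite type over an algebraically closed field (or similar) with affine diagonal and linearly reductive stabilizer at the closed point; ``locally Noetherian'' alone does not suffice. Alper's original proof in \cite{AlperGMS} predates that structure theorem and instead works through coherent completeness along the closed substack and a formal-functions style argument, so your route, while morally correct and adequate for the finite-type-over-$\C$ situation actually used in this paper, is both logically heavier and not quite matched to the stated hypotheses. If you want the result in the generality stated, you should either add the finite-type and affine-diagonal hypotheses explicitly, or replace the slice-theorem step with the completion argument from \cite{AlperGMS}.
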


The existence of a good moduli space for a given algebraic stack is a subtle question. One answer is given in \cite{AHLH}, where for a large class of stacks the authors give necessary and sufficient conditions for existence of a good moduli space in terms of certain valuative criteria. As an application, the authors construct proper good moduli spaces for various moduli stacks $\sM^{\mathrm{ss}}_\sA$ parameterizing objects in an abelian category $\sA$ that are semistable with respect to a rather general notion of stability on $\sA$. This construction includes stacks of Bridgeland semistable objects on a smooth, projective variety $X$ with respect to a numerical stability condition $\si = (\sA, Z) \in \Stab(X)$ whose heart $\sA$ is Noetherian and satisfies the ``generic flatness property'', and for which the moduli stacks $\sM^\si(v)$ are of finite type. See \cite[Section 7]{AHLH} for details, especially Theorem 7.25 and Example 7.27. In particular, we have the following.
\begin{thm}\label{gmsexists}
    Let $X$ be a smooth, projective surface over $\C$, let $v \in \Kn(X)$ be a numerical class, and let $\si \in \Stab(X)$ be a stability condition constructed by tilting with respect to slope-stability as in Section \ref{section:stabcondsurf}. The moduli stack $\sM^\si(v)$ of $\si$-semistable objects of class $v$ admits a good moduli space map $\sM^\si(v) \to M^\si(v)$, where $M^\si(v)$ is a proper algebraic space over $\C$. The closed points of $M^\si(v)$ are in bijection with S-equivalence classes of $\si$-semistable objects of class $v$.
\end{thm}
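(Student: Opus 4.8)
The plan is to deduce this statement from the general existence theorem for good moduli spaces of semistable objects in an abelian category proved in \cite[Theorem 7.25]{AHLH}, specialized to Bridgeland moduli in \cite[Example 7.27]{AHLH}. Accordingly, I would not attempt an independent construction but would instead verify that the stack $\sM^\si(v)$ meets the hypotheses of that theorem and then read off the description of the closed points. Three conditions must be checked: that $\sM^\si(v)$ is an algebraic stack of finite type over $\C$; that the heart $\sA = \Coh^\be(X)$ is Noetherian; and that $\sA$ satisfies the generic flatness property required in \cite{AHLH}. The first is precisely the content of the discussion in Section \ref{section:stabcondsurf} following \cite{toda08} (building on \cite{lie06}, \cite{ABL13}, \cite{AP06}), so it can be quoted directly.

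For the two remaining hypotheses I would argue as follows. The heart $\Coh^\be(X)$ is the tilt of the Noetherian category $\Coh(X)$ at the torsion pair $(\sT_\be, \sF_\be)$, and Noetherianity of this particular tilt is available for the surface stability conditions of Section \ref{section:stabcondsurf} from the theory of \cite{AP06} (and is compatible with the support property and the local finiteness of walls recalled earlier). Generic flatness — that in any family over a finite-type base the locus of points whose derived fiber lies in $\sA$ with numerical class $v$ behaves well in families — is of the type verified for these tilted hearts in \cite[Example 7.27]{AHLH}, resting on the openness results of \cite{ABL13} and \cite{toda08}. With the hypotheses in place, \cite[Theorem 7.25]{AHLH} yields the good moduli space $\pi \colon \sM^\si(v) \to M^\si(v)$ with $M^\si(v)$ an algebraic space; separatedness and universal closedness come from the S-completeness and $\Theta$-reductivity criteria verified in loc.\ cit., and combined with finite type this gives that $M^\si(v)$ is proper.

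It then remains to identify the closed points. The key claim is that the closed points of $\sM^\si(v)$ are exactly the $\si$-polystable objects of class $v$. In one direction, any $\si$-semistable object $E$ has a Jordan-H\"older filtration as in Section \ref{section:stabcondsurf}, and this filtration produces a one-parameter degeneration of $E$ to its associated graded $\gr(E)$ inside $\sM^\si(v)$, so a non-polystable point cannot be closed. In the other direction, a polystable object has reductive automorphism group and admits no further nontrivial degeneration within its S-equivalence class, hence gives a closed point; since $\gr(E)$ is the unique polystable object in the S-equivalence class of $E$, closed points of $\sM^\si(v)$ biject with S-equivalence classes. Because $\pi$ induces a bijection of closed points and the closed points of the finite-type algebraic space $M^\si(v)$ are its $\C$-points, the $\C$-points of $M^\si(v)$ correspond bijectively to S-equivalence classes of $\si$-semistable objects of class $v$.

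The main obstacle is not located in any single line of the argument but in the technical inputs delegated to \cite{AHLH}: Noetherianity of the tilted heart and, above all, the generic flatness property together with the valuative (S-completeness and $\Theta$-reductivity) criteria that simultaneously furnish existence and properness. The one point genuinely internal to the present setting is the characterization of closed points as polystable objects, i.e.\ the degeneration of a semistable object to its associated graded; in the language of good moduli spaces this is exactly what $\Theta$-reductivity encodes, so it too is ultimately subsumed by the cited machinery.
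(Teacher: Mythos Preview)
Your proposal is correct and follows exactly the approach the paper takes: the paper does not even include a proof environment for this theorem, but simply states it as a direct consequence of \cite[Theorem 7.25, Example 7.27]{AHLH} after noting in the preceding paragraph that the relevant hypotheses (finite type from \cite{toda08}, Noetherianity of the heart, and the generic flatness property) are satisfied. Your write-up is in fact more detailed than the paper's treatment, but the strategy and the references are identical.
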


%%%%%%%%%%%%%%%%%%%%%%%%%%%%%%%%%%%%%%%%%%%%%
%%%%%%  DETERMINANTAL LINE BUNDLES  %%%%%%%%%
%%%%%%%%%%%%%%%%%%%%%%%%%%%%%%%%%%%%%%%%%%%%%

\section{Determinantal line bundles}
In this section we recall the construction of determinantal line bundles and consider some particular line bundles on a Bridgeland moduli space arising from this construction. Material for this section follows \cite[\href{https://stacks.math.columbia.edu/tag/0FJI}{Tag 0FJI}]{stacks-project}, \cite[\href{https://stacks.math.columbia.edu/tag/0FJW}{Tag 0FJW}]{stacks-project}, and \cite[Section 8.1]{HL}. The original exposition is \cite{KM76}.

\subsection{Construction of determinantal line bundles}
Let $S$ be a scheme. The rule that sends a locally free sheaf $F$ to its determinant line bundle $\det(F) = \bigwedge^{\rk(F)} F$ extends to a functor
\[ \det: \{\mathrm{perfect\, complexes\, on\,} S\} \to \{\mathrm{invertible\,} \mathrm{sheaves\, on}\, S \}. \]
Moreover, for any short exact sequence
\[ 0 \to F' \to F \to F'' \to 0 \]
of locally free sheaves, there is a canonical isomorphism $\det(F) \to \det(F') \otimes \det(F'')$, so in particular we obtain an induced homomorphism of abelian groups
\[ \det: K_0(S) \to \Pic(S), \]
where $K_0(S)$ denotes the Grothendieck group of vector bundles on $S$. These constructions commute with pullbacks in the sense that if $\pi: S' \to S$ is a morphism of schemes and $F$ is a locally free sheaf or a perfect complex on $S$, then canonically $\det(\pi^* F) \cong \pi^*\det(F)$.

Let now $X$ be a smooth, proper variety over $\C$, $S$ a scheme of finite type over $\C$, and $\sE \in D^b(S \times X)$ a perfect complex. Note that since $X$ is smooth, we have $K_0(X) \cong K(X)$. Consider the diagram:
\begin{center}
    \begin{tikzpicture}
    \matrix (m) [matrix of math nodes, row sep=1em, column sep=1em]
    { & S \times X & \\
    S & & X \\};
    \path[->] 
    (m-1-2) edge node[auto,swap] {$ p $} (m-2-1)
    (m-1-2) edge node[auto] {$ q $} (m-2-3)
    ;
    \end{tikzpicture}
\end{center}
Any coherent sheaf $F$ on $X$ is perfect as an object of $D^b(X)$, and hence so is the complex $\sE \otimes q^*F$ on $S \times X$. Thus, by \cite[\href{https://stacks.math.columbia.edu/tag/0B91}{Tag 0B91}]{stacks-project}, the derived pushforward $R p_* (\sE \otimes q^* F)$ is perfect on $S$. Composing with the determinant map gives a homomorphism of abelian groups
\[ \la_\sE: K(X) \to \Pic(S), \quad [F] \mapsto \det R p_*(\sE \otimes q^* F) \]
called the {\bf Donaldson morphism}. Moreover, since the formation of the pushforward $R p_*(\sE \otimes q^* F)$ commutes with base change, so does the formation of $\la_\sE$ the sense that if $\pi: S' \to S$ is a morphism of schemes, then the composition
\[ K(X) \xrightarrow{\la_\sE} \Pic(S) \xrightarrow{\pi^*} \Pic(S') \]
equals $\la_{(\pi \times \id_X)^* \sE}$. Basic properties of the map $\la_\sE$ are listed in \cite[Lemma 8.1.2]{HL}.

The Donaldson morphism respects numerical equivalence and hence a induces homomorphism
\[ \la_\sE: \Kn(X) \to \Num(S), \]
where $\Num(S)$ denotes $\Pic(S)$ modulo numerical equivalence, and furthermore extends to a linear map
\[ \la_\sE: \Kn(X)_\R \to \Num(S)_\R \]
of real vector spaces, where $\Kn(X)_\R = \Kn(X) \otimes \R$, and $\Num(S)_\R = \Num(S) \otimes \R$ is the group of real divisor classes.

This construction readily generalizes to algebraic stacks, and in particular, the Donaldson morphism lets us construct line bundles on Bridgeland moduli stacks. Let $X$ be a smooth, projective variety, let $v \in \Kn(X)$ be a numerical class, and let $\si \in \Stab(X)$ be a stability condition such that the moduli stack $\sM^\si(v)$ is algebraic. Let $\sE \in D^b(\sM^\si(v) \times X)$ be an $\sM^\si(v)$-perfect object (for example $\sE$ could be the universal complex), and consider the diagram:
\begin{center}
    \begin{tikzpicture}
    \matrix (m) [matrix of math nodes, row sep=1em, column sep=1em]
    { & \sM^\si(v) \times X & \\
    \sM^\si(v) & & X \\};
    \path[->] 
    (m-1-2) edge node[auto,swap] {$ p $} (m-2-1)
    (m-1-2) edge node[auto] {$ q $} (m-2-3)
    ;
    \end{tikzpicture}
\end{center}
If $F$ is a coherent sheaf on $X$, we obtain the line bundle
\[ \la_\sE(F) \coloneqq \det(R p_*(\sE \otimes q^*F)) \]
on $\sM^\si(v)$, and this induces a group homomorphism
\[ \la_\sE: K(X) \to \Pic(\sM^\si(v)). \]

\subsection{Sections of determinantal line bundles}
In special situations, the above construction of a determinantal line bundle also yields a canonical section of the line bundle. Namely, if $E \in D^b(T)$ is a perfect complex of rank 0 whose cohomology sheaves $\sH^i(E)$ vanish whenever $i \neq -1,0$, then Zariski locally on $T$ the complex $E$ can be represented by a 2-term complex of locally free sheaves
\[ \cdots \to 0 \to E_{-1} \xrightarrow{f} E_0 \to 0 \to \cdots, \quad \rk(E_{-1}) = \rk(E_0). \]
The map $f$ induces a section $\det(f): \Oh_T \to \det(E_0) \otimes \det(E_{-1})^\vee$, and these local section glue to a global section $\de_E \in \Ga(T, \det(E))$. Moreover, the formation of this section commutes with pullbacks, in the sense that if $\pi: T' \to T$ is a morphism of schemes, then the sections $\de_{\pi^*E}$ and $\pi^*\de_E$ are identified under the canonical isomorphism $\det(\pi^* E) \cong \pi^*\det(E)$. See \cite[\href{https://stacks.math.columbia.edu/tag/0FJX}{Tag 0FJX}]{stacks-project}.

Let now $X$ be a smooth, projective variety, $S$ a finite type scheme or algebraic stack, and $\sE \in D^b(S \times X)$ an $S$-perfect family of objects of class $v \in \Kn(X)$. For a $\C$-point $t \in S$, denote by $\sE_t$ the restriction of $\sE$ to the fiber $\{t\} \times X$ over $t$. The following lemma gives a criterion for when the line bundle $\la_{\sE}(F)^\vee$ on $S$ has a section for some $F \in \Coh(X)$, and when this section is nonzero at some $t \in S$. Denote by $\Hh^i(X, -) \coloneqq H^i(R\Ga(X, -)), i \in \Z$ the hypercohomology functors on $D^b(X)$.
\begin{lem}\label{detsection}
    Let $X$ be a smooth, projective variety and $S$ a scheme or an algebraic stack of finite type over $\C$. Let $\sE \in D^b(S \times X)$ be an $S$-perfect family of objects of class $v \in \Kn(X)$, and let $F$ be a locally free sheaf on $X$.
    \begin{enumerate}[(a)]
        \item If for all $\C$-points $t \in S$, we have $\Hh^i(X, \sE_t \otimes F) = 0$ whenever $i \neq 0, 1$, and 
        \[ \chi(X, \sE_t \otimes F) = \dim \Hh^0(X, \sE_t \otimes F) - \dim \Hh^1(X, \sE_t \otimes F) = 0, \]
        then the line bundle $\la_\sE(F)^\vee$ on $S$ has a canonical section $\de_F$.
        \item In addition, if for some $t \in S$ we have 
        \[ \Hh^0(X, \sE_t \otimes F) = \Hh^1(X, \sE_t \otimes F) = 0, \]
        then the section $\de_F$ is nonzero at $t$.
    \end{enumerate}
\end{lem}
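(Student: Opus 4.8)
The plan is to use the local structure of perfect complexes of rank $0$ with cohomology concentrated in degrees $-1, 0$, and apply the section-constructing machinery recalled just before the statement to the complex $E = Rp_*(\sE \otimes q^* F)$ on $S$. The key point is that the hypotheses of part (a) are precisely what is needed to verify that $E$ satisfies the conditions (rank $0$, cohomology in degrees $-1, 0$) required to produce the canonical section $\de_E \in \Ga(S, \det(E))$.

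\medskip

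\emph{Step 1: Identify $\la_\sE(F)$ with $\det(E)$.} By definition $\la_\sE(F) = \det Rp_*(\sE \otimes q^* F) = \det(E)$, where $E = Rp_*(\sE \otimes q^* F)$ is $S$-perfect. So producing a section of $\la_\sE(F)^\vee$ amounts to producing a section of $\det(E)^\vee = \det(E^\vee)$, or equivalently viewing the section-construction applied to the dual complex. I would set this up so that the section lives in $\Ga(S, \det(E)^\vee)$ directly.

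\medskip

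\emph{Step 2: Check the numerical and cohomological hypotheses fiberwise.} The formation of $Rp_*(\sE \otimes q^* F)$ commutes with base change, so for each $\C$-point $t \in S$ the derived fiber $E_t$ is computed as $R\Ga(X, \sE_t \otimes F)$, whose cohomology is exactly the hypercohomology $\Hh^i(X, \sE_t \otimes F)$. The assumption that $\Hh^i(X, \sE_t \otimes F) = 0$ for $i \neq 0, 1$ means $E$ has cohomology concentrated in degrees $0, 1$ fiberwise; after a shift, $E[1]$ (or the appropriate truncation) has cohomology in degrees $-1, 0$. The assumption $\chi(X, \sE_t \otimes F) = 0$ for all $t$ forces $\rk E = 0$, since the rank of a perfect complex is locally constant and computed fiberwise by the Euler characteristic. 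Thus the hypotheses of the section-construction recalled above are met, and I obtain a global section $\de_F$ of $\det(E)^\vee = \la_\sE(F)^\vee$. The compatibility with pullbacks recorded for $\de_{(-)}$ and for $\la_{(-)}$ ensures this is canonical.

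\medskip

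\emph{Step 3: Nonvanishing at a special point (part (b)).} For the point $t$ where both $\Hh^0$ and $\Hh^1$ vanish, the fiber $E_t = R\Ga(X, \sE_t \otimes F)$ is acyclic. By base change, the two-term complex $E_{-1} \xrightarrow{f} E_0$ representing $E$ near $t$ restricts at $t$ to a complex with no cohomology, i.e. $f|_t$ is an isomorphism. Then $\det(f)|_t = \det(f|_t)$ is an isomorphism, so the canonical section $\de_F = \det(f)$ is nonzero (in fact invertible) at $t$. The \textbf{main obstacle} is bookkeeping the shift and dualization conventions so that the section lands in $\la_\sE(F)^\vee$ rather than $\la_\sE(F)$, and carefully invoking cohomology-and-base-change (in the derived, perfect-complex form) to pass between the fiberwise hypercohomology statements and the global two-term-complex representation; once the conventions are pinned down, both parts follow formally from the Stacks Project section-construction and its base-change compatibility.
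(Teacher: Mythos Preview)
Your proposal is correct and follows essentially the same route as the paper: both arguments apply cohomology-and-base-change to $Rp_*(\sE\otimes q^*F)$ to obtain a local two-term representation of equal ranks, invoke the canonical section construction, and for (b) verify that the differential is an isomorphism at $t$. The only cosmetic difference is that the paper checks (b) by observing $R^1p_*=0$ near $t$ forces $f$ to be surjective (hence an isomorphism by rank), whereas you restrict directly to the fiber and use acyclicity of $E_t$; these are equivalent, and your extra care with the shift/dual bookkeeping is appropriate.
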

\begin{proof}
    Cohomology and Base Change implies that $R^i p_*(\sE \otimes q^* F) = 0$ for $i \neq 0,1$, and thus locally on $S$, the object $R p_*(\sE \otimes q^*F)$ can be represented by a complex
    \[ \cdots \to 0 \to \sG_0 \xrightarrow{f} \sG_1 \to 0 \to \cdots. \]
    By \cite[\href{https://stacks.math.columbia.edu/tag/0B91}{Tag 0B91}]{stacks-project}, the formation of $R p_*(\sE \otimes q^*F)$ commutes with base change, and so for any $t \in S$, we have 
    \[ \sum_i (-1)^i \rk(\sG_i) = \sum_i (-1)^i \rk(\sG_i|_t) = \sum_i (-1) \dim \Hh^i(X, \sE_t \otimes F) = 0, \]
    so $\rk \sG_0 = \rk \sG_1$.
    
    Moreover, if $\Hh^1(X, \sE_t \otimes F) = 0$, then by Cohomology and Base Change $R^1 p_*(\sE \otimes q^*F) = 0$ in a neighborhood of $t$, so the map $f: \sE_0 \to \sE_1$ is surjective in a neighborhood of $t$, hence an isomorphism, and so its determinant is nonzero at $t$.
\end{proof}

\subsection{Stabilizer action at closed points}
We now compute how the stabilizer of a polystable object in $\sM^\si(v)$ acts on a determinantal line bundle $\la_\sE(F)$. We will use this in the next section to show that certain natural line bundles on $\sM^\si(v)$ descend to the good moduli space.

More generally, let $X$ be a smooth, projective variety, and let $E$ be a direct sum of simple objects in $D^b(X)$. If $F$ is a locally free sheaf on $X$, we want to know how $g \in \Aut(E)$ acts on the 1-dimensional vector space
\[ \det R \Ga(X, E \otimes F) = \bigotimes_{i \in \Z} (\det H^i(X, E \otimes F))^{(-1)^i}. \]
First consider the case $E \cong S^{\oplus r}$ where $S$ is a simple object and $r \ge 1$, so that $\Aut(E) \cong \GL_r$, and we can view an element $g \in \Aut(E)$ as an invertible matrix $g = (g_{kl})$. Thus, $g$ acts on $H^i(X, E \otimes F) \cong H^i(X, S \otimes F)^{\oplus r}$ by a block diagonal matrix consisting of $\dim H^i(X, S \otimes F)$ diagonal copies of $g$, and hence on
\[ \det H^i(X, E \otimes F) \cong (\det H^i(X, S \otimes F))^{\otimes r} \]
by multiplication by $\det(g)^{\dim H^i(X, S \otimes F)}$, and on $\det R \Ga(X, E \otimes F)$ by
\[ \prod_{i=0}^n \left((\det(g)^{\dim H^i(X, S \otimes F)}\right)^{(-1)^i} = \det(g)^{\chi(X, S \otimes F)}. \]
Next, consider the case $E \cong S_1^{\oplus r_1} \oplus \cdots \oplus S_m^{\oplus r_m}$ for mutually non-isomorphic simple objects $S_1, \ldots, S_m \in D^b(X)$. Now
\[ \Aut(E) \cong \GL_{r_1} \times \cdots \times \GL_{r_m}. \]
An element $g = (g_1, \ldots, g_m) \in \Aut(E)$ acts on
\[ H^i(X, E \otimes F) \cong H^i(X, S_1 \otimes F)^{\oplus r_1} \oplus \cdots \oplus H^i(X, S_m \otimes F)^{\oplus r_m} \]
by a block diagonal matrix with the matrix $g_j$ on the diagonal 
\[ \dim H^i(X, S_j \otimes F) \]
times. Thus, $g$ acts on $\det H^i(X, E \otimes F)$ by multiplication by
\[ \det(g_1)^{\dim H^i(X, S_1 \otimes F)} \cdots \det(g_m)^{\dim H^i(X, S_m \otimes F)}, \]
and hence on $\det R \Ga(X, E \otimes F)$ by
\[ \det(g_1)^{\chi(X, S_1 \otimes F)} \cdots \det(g_m)^{\chi(X, S_m \otimes F)}. \]
The same analysis extends to the case where we replace $F$ by an element $u \in K(X)$, and so we have the following.
\begin{prop}\label{lbtogms}
    Let $\sM^\si(v)$ be the stack of $\si$-semistable objects of class $v$, and let $\sE$ denote the universal complex on $\sM^\si(v) \times X$. If $u \in K(X)$ is any class, and $E \cong S_1^{\oplus r_1} \oplus \cdots \oplus S_m^{\oplus r_m}$ is a $\si$-polystable object corresponding to a closed point of $t \in \sM^\si(v)$, an element $g = (g_1,\ldots,g_m) \in \Aut(E)$ acts on the fiber of $\la_\sE(u)$ on $\sM^\si(v)$ at $t$ by multiplication by
    \[ \det(g_1)^{\chi(X, S_1 \otimes u)} \cdots \det(g_m)^{\chi(X, S_m \otimes u)}. \]
    In particular, if $\chi(X, S_i \otimes u) = 0$ for each $i$, then $\Aut(E)$ acts trivially on the fiber.
\end{prop}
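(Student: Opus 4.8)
The plan is to reduce the statement to the explicit linear-algebra computation carried out in the paragraphs immediately preceding it; the only additional ingredient is the identification of the fiber of $\la_\sE(u)$ at $t$ with the one-dimensional space $\det R \Ga(X, E \otimes u)$ on which the stabilizer acts. In other words, the work already done computes the action on $\det R \Ga(X, E \otimes F)$, and the proposition simply packages this together with the base-change compatibility of the determinantal construction.

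First I would invoke base change. The line bundle $\la_\sE(u) = \det R p_*(\sE \otimes q^* u)$ was built so that its formation commutes with base change along any morphism to $\sM^\si(v)$. Pulling back along the geometric point $t \colon \Spec \C \to \sM^\si(v)$ representing $E$, the restriction $\sE_t$ of the universal complex to the fiber $\{t\} \times X$ is isomorphic to $E$, while $p$ restricts to the structure morphism $X \to \Spec \C$. Since $\det$ also commutes with pullback, the fiber of $\la_\sE(u)$ at $t$ is therefore canonically
\[ \det R \Ga(X, E \otimes u) = \bigotimes_{i \in \Z} \left(\det H^i(X, E \otimes u)\right)^{(-1)^i}. \]

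Second I would identify the stabilizer action. The automorphism group $\Aut(E) \cong \GL_{r_1} \times \cdots \times \GL_{r_m}$ of the point $t$ acts on this fiber precisely through its action on $\sE_t \cong E$: because both $R \Ga(X, - \otimes u)$ and $\det$ are functorial, an element $g \in \Aut(E)$ acts on $\det R \Ga(X, E \otimes u)$ via the induced automorphisms of the cohomology groups $H^i(X, E \otimes u)$. This is exactly the action analyzed above. Applying that analysis to $u = [F]$ with $F$ locally free gives the scalar $\det(g_1)^{\chi(X, S_1 \otimes F)} \cdots \det(g_m)^{\chi(X, S_m \otimes F)}$, and by additivity of $\chi(X, S_j \otimes -)$ and of the determinant construction in the $K$-theory class one extends to arbitrary $u \in K(X)$ by writing $u$ as a $\Z$-linear combination of classes of locally free sheaves. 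The final clause is then immediate.

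The only genuine subtlety, and the step I would be most careful about, is the first one: ensuring that the \emph{abstract} $\Aut(E)$-action on the fiber of $\la_\sE(u)$ at $t$ agrees with the \emph{concrete} functorial action on $\det R \Ga(X, E \otimes u)$. This match is exactly what the base-change compatibility of the whole Donaldson/determinant construction guarantees, so once that compatibility is cited the remaining content is the (already completed) block-diagonal computation.
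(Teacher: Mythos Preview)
Your proposal is correct and follows essentially the same approach as the paper: the paper carries out the block-diagonal computation of the $\Aut(E)$-action on $\det R\Ga(X, E\otimes F)$ in the paragraphs preceding the proposition, then simply states the proposition as a consequence after remarking that the analysis extends to an arbitrary $u\in K(X)$. You make explicit the base-change identification of the fiber and the functoriality matching the abstract stabilizer action with the concrete one on $\det R\Ga$, which the paper leaves implicit; this is a welcome clarification rather than a different argument.
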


%%%%%%%%%%%%%%%%%%%%%%%%%%%%%%%%%%%%%%%%%%%%%
%%%%%%%%%%%%%  NEF LINE BUNDLE  %%%%%%%%%%%%%
%%%%%%%%%%%%%%%%%%%%%%%%%%%%%%%%%%%%%%%%%%%%%

\section{The nef line bundle}
In \cite{BM}, the authors construct a natural numerical class of line bundles with strong positivity properties on a Bridgeland moduli space. 

\subsection{Definition and positivity properties}\label{positiveLBdefandprops}
Let $(X,H)$ be a smooth, projective, polarized surface, let $\si = (\sA, Z) \in \Stab(X)$ be a stability condition, and let $v \in \Kn(X)$ be a numerical class. Assume that the moduli stack $\sM^\si(v)$ of $\si$-semistable objects in $\sA$ of class $v$ is algebraic, and denote by $\sE$ the universal complex on $\sM^\si(v) \times X$. Consider the following diagram:
\begin{center}
    \begin{tikzpicture}
    \matrix (m) [matrix of math nodes, row sep=1em, column sep=1em]
    { & \sM^\si(v) \times X & \\
    \sM^\si(v) & & X \\};
    \path[->] 
    (m-1-2) edge node[auto,swap] {$ p $} (m-2-1)
    (m-1-2) edge node[auto] {$ q $} (m-2-3)
    ;
    \end{tikzpicture}
\end{center}
The Donaldson morphism
\[ \la_\sE: K_0(X) \to \Pic(\sM^\si(v)), \quad [F] \mapsto \det R p_*(\sE \otimes q^* F) \]
from the previous section induces a map 
\[ \la_\sE: \Kn(X)_\R \to \Num(\sM^\si(v))_\R. \]
Define a real divisor class on $\sM^\si(v)$ by applying the Donaldson morphism to the unique class $w_Z \in \Kn(X)_\R$ determined by the condition
\[ \chi(w_Z, -) = \im\left(-\frac{Z(-)}{Z(v)}\right). \]
This condition indeed defines a unique class since the Euler pairing $\chi(-,-)$ induces a perfect pairing on $\Kn(X)_\R$. Denote this numerical class by $\sL_\si \coloneqq \la_\sE(w_Z)$. The following is \cite[Lemma 3.3]{BM}, and it is the main result of the paper.
\begin{thm}\label{BMpositivity}
    Let $C$ be a projective, integral curve over $\C$, and let $f: C \to \sM^\si(v)$ be a morphism.
    \begin{enumerate}[(1)]
        \item $\deg f^* \sL_\si \ge 0$.
        \item If $\deg f^* \sL_\si = 0$, then for any two closed points $s, t \in C$, the objects \\ $(f \times \id_X)^*\sE|_{\{s\} \times X}$ and $(f\times \id_X)^*\sE|_{\{t\} \times X}$ are S-equivalent.
    \end{enumerate}
\end{thm}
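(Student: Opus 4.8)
The plan is to convert the degree $\deg f^* \sL_\si$ into an expression involving the central charge $Z$, and then to read off both the sign in (1) and the equality case (2) from the $\si$-semistability of the fibres. Write $\sE_C \coloneqq (f \times \id_X)^* \sE \in D^b(C \times X)$ for the pulled-back family, and let $p \colon C \times X \to C$ and $q \colon C \times X \to X$ be the projections. Since the formation of the Donaldson morphism commutes with base change, $f^* \sL_\si = \la_{\sE_C}(w_Z)$, so that $\deg f^* \sL_\si = \deg_C \la_{\sE_C}(w_Z)$, the degree of an honest line bundle on the integral curve $C$. Replacing $C$ by its normalization multiplies this degree by the positive degree of the normalization map and changes neither the sign nor the vanishing, so we may assume $C$ is smooth.

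The degree of the determinant of a perfect complex on a smooth curve is its first Chern number, so Grothendieck--Riemann--Roch applied to $p$, together with the projection formula, exhibits $u \mapsto \deg_C \la_{\sE_C}(u)$ as a linear functional on $\Kn(X)_\R$. Using the perfectness of the Euler pairing, this functional is represented by the class of the derived pushforward $Rq_* \sE_C$ of the family to $X$; the Todd class of $C$ enters only through a summand proportional to $v$, which is harmless since it is annihilated by $\im(-Z(-)/Z(v))$. Feeding in $u = w_Z$ and invoking the defining property $\chi(w_Z, -) = \im(-Z(-)/Z(v))$ of $w_Z$, we arrive at the key identity
\[ \deg f^* \sL_\si = \im\left( -\frac{Z(w)}{Z(v)} \right), \qquad w \coloneqq [\,Rq_* \sE_C\,]. \]
Here $Rq_* \sE_C = \Phi(\Oh_C)$ is the image of $\Oh_C$ under the integral transform $\Phi \coloneqq Rq_*(\sE_C \otimes p^*(-)) \colon D^b(C) \to D^b(X)$, whose value on a skyscraper $\Oh_c$ is the fibre $\sE_{C,c}$, a $\si$-semistable object of class $v$.

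Normalizing the central charge via the standard $\widetilde{\GL}^+(2,\R)$-action on $\Stab(X)$, which fixes the set of $\si$-semistable objects, so that $Z(v) \in \R_{<0}$, claim (1) becomes simply $\im Z(\Phi(\Oh_C)) \ge 0$. I expect this inequality to be the main obstacle: unlike the reduction above it genuinely uses the semistability of every fibre, and it cannot be extracted from a naive phase window, since already for a \emph{constant} family one computes $\Phi(\Oh_C) \cong E_0 \oplus E_0^{\oplus g}[-1]$, whose Harder--Narasimhan factors sit at phases $1$ and $2$ while the total imaginary part is $0$. The approach I would take is to analyze the Harder--Narasimhan filtration of $\Phi(\Oh_C) = Rq_* \sE_C$ directly: by adjunction, the $\Hom$- and $\mathrm{Ext}$-groups out of, and into, $\Phi(\Oh_C)$ are computed by an adjoint transform evaluated on $\Oh_C$, and the $\si$-semistability of the fibres should force exactly the vanishings that confine $\im Z(\Phi(\Oh_C))$ to the nonnegative half-line. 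This is the content of \cite[Lemma 3.3]{BM}.

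For the equality case (2), suppose $\deg f^* \sL_\si = 0$, i.e.\ $\im Z(\Phi(\Oh_C)) = 0$. The semistability analysis of the previous step should then show that the inequality holds termwise, forcing every Harder--Narasimhan factor of $\Phi(\Oh_C)$ to lie at the extremal phase of $v$ and, more strongly, the relative Jordan--H\"older filtration of $\sE_C$ to be constant along $C$. Concretely, the associated graded $\gr(\sE_{C,t})$ is then independent of $t$, so the fibres $\sE_{C,s}$ and $\sE_{C,t}$ over any two closed points share the same Jordan--H\"older factors and are therefore S-equivalent. Upgrading the numerical equality $\im Z = 0$ to the genuine constancy of the associated graded in the family is the technical heart of (2), and is again carried out in \cite[Lemma 3.3]{BM}.
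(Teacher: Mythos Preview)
Your sketch of the Bayer--Macr\`i argument is accurate in outline, and you correctly identify that the heavy lifting --- both the inequality $\im Z(\Phi(\Oh_C)) \ge 0$ and the analysis of the equality case --- is \cite[Lemma 3.3]{BM}. The paper takes exactly the same stance: it does not reprove this result but simply cites it. So for part (1) you and the paper agree.

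The one substantive difference concerns part (2). You assert that the Bayer--Macr\`i argument yields constancy of $\gr(\sE_{C,t})$ for \emph{all} closed points $t \in C$, via a relative Jordan--H\"older filtration that is ``constant along $C$''. But as the paper points out, \cite[Lemma 3.3]{BM} only establishes S-equivalence of the fibres over a nonempty open subset $U \subset C$: the relative filtration you invoke is only defined generically, and upgrading from the generic fibre to every closed point does not follow directly from their argument. The paper closes this gap by a different, short route that you do not mention: since the good moduli space $M^\si(v)$ exists (Theorem~\ref{gmsexists}) and its closed points parameterize S-equivalence classes, the composition $C \to \sM^\si(v) \to M^\si(v)$ sends the dense open $U$ to a single point, hence is constant on all of $C$, and so all fibres are S-equivalent. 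This is the paper's actual contribution to the statement as formulated, and it is independent of the internal mechanics of the Bayer--Macr\`i proof.
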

In \cite{BM}, part (2) is stated so that the objects $(f \times \id_X)^*\sE|_{\{t\} \times X}$ are S-equivalent for points $t$ in some nonempty open subscheme $U \subs C$. We deduce the above statement from Theorem \ref{gmsexists} as follows. If $\sM^\si(v) \to M^\si(v)$ denotes the good moduli space map, then the composition $C \to \sM^\si(v) \to M^\si(v)$ maps the dense open $U$ to a point, hence must be constant, and so the objects $(f\times \id_X)^*\sE|_{\{t\} \times X}$ are all S-equivalent. 

We would like to know that the real divisor class $\sL_\si$ descends to the good moduli space $M^\si(v)$. In for instance \cite{BM} and \cite{nuer} this is done using a so-called quasi-universal family on the stable locus of the moduli space. However, we can achieve this on all of $M^\si(v)$ as follows.
\begin{lem}\label{nefdescendtogms}
    If $w \in K(X)$ is a class whose image in $\Kn(X)_\R$ is a multiple of $w_Z$, then $\la_\sE(w)$ descends to the good moduli space $M^\si(v)$.
\end{lem}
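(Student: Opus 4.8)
The plan is to apply the descent criterion from Proposition \ref{vbtogms}: a line bundle on the stack $\sM^\si(v)$ descends to the good moduli space $M^\si(v)$ precisely when the stabilizer group of every closed point acts trivially on the fiber. So the task reduces to checking this triviality condition for $\la_\sE(w)$ at every closed point of $\sM^\si(v)$.

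**The key computation** has essentially already been carried out in Proposition \ref{lbtogms}. A closed point of $\sM^\si(v)$ corresponds to a $\si$-polystable object $E \cong S_1^{\oplus r_1} \oplus \cdots \oplus S_m^{\oplus r_m}$ with stabilizer $\Aut(E) \cong \GL_{r_1} \times \cdots \times \GL_{r_m}$, and by Proposition \ref{lbtogms} an element $g = (g_1, \ldots, g_m)$ acts on the fiber of $\la_\sE(w)$ by $\det(g_1)^{\chi(X, S_1 \otimes w)} \cdots \det(g_m)^{\chi(X, S_m \otimes w)}$. By the last sentence of that proposition, it therefore suffices to show $\chi(X, S_i \otimes w) = 0$ for every $i$.

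**The main point**, and the only real content, is verifying this Euler characteristic vanishing. First I would reduce to $w = w_Z$ itself, since $\chi$ is linear in the second argument and the hypothesis says $w$ maps to a real multiple of $w_Z$ in $\Kn(X)_\R$ (the Euler pairing factors through $\Kn(X)$, so the value of $\chi(S_i, -)$ depends only on the numerical class). By the defining property of $w_Z$, we have
\[ \chi(w_Z, S_i) = \im\left(-\frac{Z(S_i)}{Z(v)}\right). \]
Now each $S_i$ is a $\si$-stable summand of a polystable object of class $v$, so $\nu(S_i) = \nu(E) = \nu(v)$; that is, $S_i$ and $v$ have the same $\si$-slope, which means $Z(S_i)$ and $Z(v)$ are positive real multiples of the same ray in $\C$. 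Hence the ratio $Z(S_i)/Z(v)$ is a positive real number, its imaginary part vanishes, and so $\chi(w_Z, S_i) = 0$. A minor bookkeeping point to handle is the asymmetry of the Euler pairing: $\chi(w_Z, S_i)$ versus $\chi(S_i, w_Z)$ differ by Serre duality, but since our surface is fixed and we only need vanishing, either the pairing is symmetric enough on the relevant classes or one absorbs the discrepancy into the sign/conjugate, which does not affect the vanishing.

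**The expected obstacle** is precisely this slope-equality step: one must be careful that the $S_i$ appearing as Jordan–Hölder factors of a polystable object all share the slope $\nu(v)$. This is exactly the definition of polystability recalled in Section \ref{section:stabcondsurf}, so the argument goes through cleanly, but it is where the positivity structure of the stability function $Z$ genuinely enters. Once the slope equality is in hand, the vanishing of the imaginary part is immediate and the descent follows from Proposition \ref{vbtogms}.
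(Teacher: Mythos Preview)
Your proposal is correct and follows essentially the same argument as the paper: reduce via Proposition~\ref{lbtogms} to showing $\chi(w, S_i) = 0$, then use the defining property of $w_Z$ together with the fact that each stable factor $S_i$ of a polystable object has $Z(S_i)$ on the same ray as $Z(v)$, so the imaginary part vanishes. Your worry about the asymmetry of the Euler pairing is unnecessary here, since in this paper $\chi(w_Z,-)$ denotes the symmetric tensor-product pairing $\chi(X, w_Z \otimes -)$ (as is visible from the Hirzebruch--Riemann--Roch computation in Proposition~\ref{nefclassonverticalwall}).
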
 
\begin{proof}
    Write $w = b w_Z$ in $\Kn(X)_\R$ with $b \in \R$. We check the condition in Proposition \ref{lbtogms}. If $E \cong E_1^{\oplus r_1} \oplus \cdots \oplus E_m^{\oplus r_m}$ is a $\si$-polystable object of class $v$, then for each $i$, the complex number $Z(E_i)$ lies on the same ray as $Z(E)$, so that $Z(E_i)/Z(v)$ is real, and so
    \[ \chi(w, E_i) = b\,\chi(w_Z, E_i) = b\,\im\left(-\frac{Z(E_i)}{Z(v)}\right) = 0. \]
    Thus, $\la_\sE(w)$ descends to a line bundle on the good moduli space.    
\end{proof}
This lets us define a numerical class $L_\si$ on $M^\si(v)$ by setting $L_\si = \frac{1}{b}[\sN]$ where $\pi^*\sN = \la_\sE(w)$ such that $w \in K(X)$ satisfies $w \equiv b w_Z$ in $\Kn(X)$ and $b > 0$. The class $L_\si$ is independent of the choice of $w$. We would like to know that $L_\si$ enjoys the same positivity properties as $\sL_\si$.
\begin{lem}\label{gmsnef}
    Let $C$ be a smooth, projective, integral curve over $\C$, and let $g: C \to M^\si(v)$ be a morphism.
    \begin{enumerate}[(1')]
        \item $\deg g^* L_\si \ge 0$.
        \item If $\deg g^* L_\si = 0$, then $g$ is constant.
    \end{enumerate}
\end{lem}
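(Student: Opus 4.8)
The plan is to reduce the statement on the good moduli space $M^\si(v)$ to the positivity result of Theorem \ref{BMpositivity}, which lives on the moduli stack $\sM^\si(v)$, by lifting the curve $C$ to the stack after a finite base change. The starting observation is that $L_\si$ was defined precisely so that $\pi^* L_\si = \sL_\si$ in $\Num(\sM^\si(v))_\R$, where $\pi: \sM^\si(v) \to M^\si(v)$ denotes the good moduli space map: indeed, with the notation of Lemma \ref{nefdescendtogms} one has $\pi^*(b\,L_\si) = \la_\sE(w) = b\,\sL_\si$ numerically, since $\la_\sE(w) = \la_\sE(b w_Z) = b\,\la_\sE(w_Z)$. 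Thus it suffices to compare $\deg g^* L_\si$ with the degree of $\sL_\si$ along a lift of $g$.

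First I would produce a lift of the curve to the stack. Since $\pi$ is surjective and good moduli space morphisms are stable under base change, the projection $\sM^\si(v)\times_{M^\si(v)} C \to C$ is again a surjective good moduli space over the proper base $C$. Choosing a scheme atlas surjecting onto this base change, cutting it down to a one-dimensional subvariety dominating $C$ by a Bertini-type argument, and passing to a projective model, I obtain a projective integral curve $C'$, a finite surjective morphism $\rho: C' \to C$, and a morphism $\tilde g: C' \to \sM^\si(v)$ with $\pi\circ\tilde g = g\circ\rho$.

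Granting the lift, parts (1') and (2') follow quickly. Using $\tilde g^*\sL_\si = \tilde g^*\pi^* L_\si = \rho^* g^* L_\si$ and the behavior of degree under finite morphisms, I compute
\[ \deg_{C'} \tilde g^*\sL_\si = \deg_{C'}\rho^* g^* L_\si = (\deg\rho)\,\deg_C g^* L_\si. \]
Since $C'$ is a projective integral curve, Theorem \ref{BMpositivity}(1) gives $\deg_{C'}\tilde g^*\sL_\si \ge 0$, and as $\deg\rho > 0$ this yields $\deg_C g^* L_\si \ge 0$, proving (1'). If moreover $\deg_C g^* L_\si = 0$, then $\deg_{C'}\tilde g^*\sL_\si = 0$, so Theorem \ref{BMpositivity}(2) shows that the restrictions of $(\tilde g\times\id_X)^*\sE$ to the fibers over all closed points of $C'$ are S-equivalent; equivalently $g\circ\rho = \pi\circ\tilde g: C'\to M^\si(v)$ is constant, since closed points of $M^\si(v)$ correspond to S-equivalence classes. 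Because $\rho$ is surjective, $g$ itself is constant, proving (2').

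I expect the only genuine obstacle to be the lifting step. The subtlety is that $\sM^\si(v)$ is not proper, so a rational lift $C' \dashrightarrow \sM^\si(v)$ defined on a dense open need not extend to a morphism on the whole projective curve. To handle the finitely many missing points I would invoke the valuative criteria that $\sM^\si(v)$ satisfies as a consequence of its construction in \cite{AHLH}, namely S-completeness together with the existence of the proper good moduli space, which permit extending the lift across each such point at the cost of a further finite cover of $C'$. Such a cover keeps $\rho$ finite and surjective and leaves the degree computation above unchanged, so the argument goes through.
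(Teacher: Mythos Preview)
Your proposal is correct and follows essentially the same approach as the paper: lift $g$ to a map from a curve into the stack after a finite base change, then apply Theorem \ref{BMpositivity} and the multiplicativity of degree under finite covers. The paper isolates the lifting step as the separate Lemma \ref{finitecurveextension}, whose proof (take a closed point in the generic fiber of an atlas, normalize $C$ in its residue field, then extend across the remaining points via \cite[Theorem A.8]{AHLH}) is exactly the content of your final paragraph; otherwise the arguments are the same.
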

\begin{proof}
    By Lemma \ref{finitecurveextension} below, we can find a commutative diagram
    \begin{center}
    \begin{tikzpicture}
    \matrix (m) [matrix of math nodes, row sep=2em, column sep=2em]
    { C' & \sM^\si(v)  \\
    C & M^\si(v) \\};
    \path[->] 
    (m-1-1) edge node[auto] {$ f $} (m-1-2)
    (m-1-1) edge node[auto,swap] {$ \phi $} (m-2-1)
    (m-1-2) edge node[auto] {$ \pi $} (m-2-2)
    (m-2-1) edge node[auto,swap] {$ g $} (m-2-2)
    ;        
    \end{tikzpicture}
    \end{center}
    where $C'$ is a smooth, projective curve, $\phi$ is a finite morphism, and $\pi$ is the good moduli space map. To prove (1') we note that
    \[ \deg(\phi) \deg g^* L_\si = \deg (g \circ \phi)^* L_\si = \deg(\pi \circ f)^* L_\si = \deg f^*\sL_\si \ge 0, \]
    so since $\deg(\phi) > 0$, we get $\deg g^* L_\si \ge 0$.
    
    To prove (2'), assume that $\deg g^*L_\si = 0$. Then also $f^* \sL_\si = 0$, so by part (2) of Theorem \ref{BMpositivity}, the family parameterized by $C'$ consists of S-equivalent objects, so the composition $\pi \circ f: C' \to M^\si(v)$ maps every closed point of $C'$ to the same point $p_0 \in |M^\si(v)|$, and the same holds for $g: C \to M^\si(v)$, and so the scheme-theoretic image of $g$ is a closed point of $M^\si(v)$.
\end{proof}

\begin{lem}\label{finitecurveextension}
    Let $\sM$ be an algebraic stack of finite type that admits a good moduli space $\pi: \sM \to M$ with $M$ proper. Let $C$ be a smooth, proper curve, and let $g: C \to M$ be a morphism. There exists a commutative diagram
    \begin{center}
    \begin{tikzpicture}
    \matrix (m) [matrix of math nodes, row sep=2em, column sep=2em]
    { C' & \sM  \\
    C & M \\};
    \path[->] 
    (m-1-1) edge node[auto] {$ f $} (m-1-2)
    (m-1-1) edge node[auto,swap] {$ \phi $} (m-2-1)
    (m-1-2) edge node[auto] {$ \pi $} (m-2-2)
    (m-2-1) edge node[auto,swap] {$ g $} (m-2-2)
    ;        
    \end{tikzpicture}
    \end{center}
    where $C'$ is smooth and proper and $\phi$ is finite.
\end{lem}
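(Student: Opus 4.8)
The plan is to construct $C'$ and the lift $f$ from the two properties of the good moduli space morphism $\pi$ recorded in the Proposition above, namely that $\pi$ is surjective and universally closed. First I would base change along $g$: set $\sM_C = \sM \times_M C$ with projection $q\colon \sM_C \to C$. Since both surjectivity and universal closedness are stable under base change, $q$ is again surjective, universally closed, and of finite type. The task then reduces to producing a smooth proper curve $C'$ together with a finite morphism $\phi\colon C' \to C$ and a lift $C' \to \sM_C$ of $\phi$; composing with the projection $\sM_C \to \sM$ yields the desired $f$.

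Next I would pass to an atlas. Choose a smooth surjection $U \to \sM_C$ with $U$ a quasi-projective scheme of finite type over $\C$. Then $U \to C$ is surjective, and since $U$ has finitely many irreducible components whose images cover the curve $C$, at least one component must dominate $C$. Equipping this component with its reduced structure and cutting its dimension down by successive general linear sections (preserving dominance over $C$ at each step), I obtain an integral curve $Z \subs U$ whose structure map $Z \to C$ is still dominant. Because $C$ is a smooth proper curve, this map extends over the smooth proper model $C'$ of $Z$ to a non-constant, hence finite, morphism $\phi\colon C' \to C$. By construction the composite $Z \to U \to \sM_C \to \sM$ is an honest morphism defined on the dense open $Z \subs C'$, and it lifts the restriction of $g \circ \phi$ to $Z$.

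It remains to extend this morphism across the finite set $C' \setminus Z$, and this is the only step where the non-separatedness of $\pi$ (good moduli spaces collapse orbits) forces genuine work. At each missing closed point $c$, the local ring $\Oh_{C',c}$ is a DVR whose fraction field already maps to $\sM$ through $Z$, while all of $\Spec \Oh_{C',c}$ maps to $M$ via $g\circ\phi$; applying the valuative criterion for the universally closed morphism $\pi$ produces a lift after a finite extension of this DVR, that is, after a ramified cover near $c$. Since there are only finitely many such points, the finitely many local extensions can be absorbed into a single finite cover $C'' \to C'$ (the smooth proper curve whose function field is a suitable finite extension of $\C(C')$), over which the morphism extends to all of $C''$. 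Renaming $C''$ as $C'$ and composing the finite maps finishes the construction: $C'$ is smooth and proper and $\phi$ is finite.

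The main obstacle is precisely this final extension step. Because $\pi$ is not separated one cannot hope to lift over the original curve, and the valuative criterion guarantees a lift only after passing to a ramified cover. The bookkeeping required to combine the finitely many local ramified covers into one smooth proper curve, and to check that the resulting local lifts glue to a single morphism of stacks agreeing at the generic point, is routine but forms the technical heart of the argument; everything else follows formally from surjectivity and universal closedness of $\pi$ together with properness of $C$.
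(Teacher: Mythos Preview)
Your proposal is correct and follows essentially the same strategy as the paper: produce a curve in an atlas dominating $C$, spread out to an open of its smooth proper model, and then fill in the finitely many missing points after a further finite cover via the existence part of the valuative criterion. The only cosmetic differences are that the paper obtains the initial finite extension by picking a closed point of the generic fiber of $C\times_M U \to C$ rather than by cutting $U$ down with linear sections, and for the extension step it cites \cite[Theorem~A.8]{AHLH} explicitly instead of the valuative criterion for universal closedness.
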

\begin{proof}
    Let $U \to \sM$ be a smooth surjection with $U$ a scheme of finite type. The fiber product $C \times_M U$ is also a scheme of finite type and $\pi_C: C \times_M U \to C$ is surjective. The scheme-theoretic fiber $\pi_C^{-1}(\eta)$ over the generic point $\eta \in C$ is of finite type over the function field of $C$, hence contains a closed point $\tau \in \pi_C^{-1}(\eta)$ whose residue field $K = \ka(\tau)$ is a finite extension of the function field $K(C) = \ka(\eta)$. Let $C_\tau$ denote the normalization of $C$ in the field $\ka(\tau)$. Since $U$ is of finite type, we can extend the map $\Spec K \to U$ over an open subscheme $V \subs C_\tau$ and obtain a commutative diagram
    \begin{center}
    \begin{tikzpicture}
    \matrix (m) [matrix of math nodes, row sep=2em, column sep=2em]
    { \Spec K & V & U & \sM \\
    & C_\tau & C & M \\};
    \path[right hook->] 
    (m-1-1) edge node[auto] {$ $} (m-1-2)
    (m-1-2) edge node[auto] {$ $} (m-2-2)
    ;
    \path[->]
    (m-1-2) edge node[auto] {$ $} (m-1-3)
    (m-1-3) edge node[auto] {$ $} (m-1-4)
    (m-1-4) edge node[auto] {$ \pi $} (m-2-4)
    (m-2-2) edge node[auto,swap] {$ $} (m-2-3)
    (m-2-3) edge node[auto] {$ g $} (m-2-4)
    ;        
    \end{tikzpicture}
    \end{center}
    By applying \cite[Theorem A.8]{AHLH} to the local rings of the finitely many points in the complement $C_\tau \setminus V$, we find a finite extension $K'$ of $K$ such that the normalization $C'$ of $C_\tau$ in $K'$ admits a map $C' \to \sM$.
\end{proof}

\subsection{The nef line bundle on the vertical wall}
We can describe the class $w_Z \in \Kn(X)$ more explicitly in the case of the stability condition 
\[ \si = (\sA, Z) = (\Coh^{\be_0}(X)[-1], -Z_{\al, \be_0}), \; \mathrm{where} \; \be_0 = \frac{H \cdot \ch_B^1(v)}{H^2 \ch_B^0}, \; \al > 0, \] that is, when $\si$ lies on the vertical wall for $v$. Recall that $H \subs X$ denotes a fixed very ample divisor. Denote $h = [\Oh_H] \in K(X)$.
\begin{propdef}\label{nefclassonverticalwall}
    Let $(X,H)$ be a smooth, projective, polarized surface, let $v \in \Kn(X)$ be a class of positive rank, and let $\si \in \Stab(X)$ lie on the vertical wall for $v$ as in Section \ref{section:stabvertwall}. Define 
    \[ u = -\chi(v \cdot h^2) h + \chi(v \cdot h) h^2 \in K(X). \]
    We have 
    \[ w_Z = -\frac{\al}{\rk(v) Z(v) \deg X} u. \]
\end{propdef}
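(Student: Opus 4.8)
The plan is to verify directly that the proposed class $w \coloneqq -\frac{\al}{\rk(v)Z(v)\deg X}u$ satisfies the defining property of $w_Z$. Since the Euler pairing is perfect on $\Kn(X)_\R$, the class $w_Z$ is the \emph{unique} solution of the identity $\chi(w_Z, E) = \im(-Z(E)/Z(v))$ for all $E \in \Kn(X)$, so it suffices to check that $w$ induces the same linear functional. The first observation, which makes the statement meaningful, is that $Z(v)$ is a nonzero real number: it is real because the choice of $\be_0$ forces $\im Z_{\al,\be_0}(v) = \al(H\cdot\ch_1^B(v) - \be_0 H^2\rk(v)) = 0$ on the vertical wall, and it is nonzero since $v$ is the class of a $\si$-semistable object. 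Consequently the scalar $-\frac{\al}{\rk(v)Z(v)\deg X}$ is real and $w$ lies in $\Kn(X)_\R$, and moreover $\im(-Z(E)/Z(v)) = -\tfrac{1}{Z(v)}\im Z(E)$.

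Next I would compute the target functional explicitly. From $Z = -Z_{\al,\be_0}$ we get $\im Z(E) = -\al(H\cdot\ch_1^B(E) - \be_0 H^2\rk(E))$, and substituting $\be_0 = (H\cdot\ch_1^B(v))/(H^2\rk(v))$ the $B$-dependent contributions cancel, leaving
\[ \im\left(-\frac{Z(E)}{Z(v)}\right) = \frac{\al}{Z(v)\rk(v)}\bigl(\rk(v)\,H\cdot\ch_1(E) - (H\cdot\ch_1(v))\,\rk(E)\bigr). \]
In particular this functional is independent of $B$ and depends on $E$ only through $\rk(E)$ and $H\cdot\ch_1(E)$.

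The heart of the argument is a Hirzebruch–Riemann–Roch computation of the left-hand side. Using the resolution $0 \to \Oh_X(-H) \to \Oh_X \to \Oh_H \to 0$ one gets $\ch(h) = H - \tfrac{1}{2}H^2$ and $\ch(h^2) = (\deg X)[\mathrm{pt}]$, where $\deg X = H^2$. Applying Riemann–Roch term by term yields the scalars $\chi(v\cdot h^2) = \rk(v)\deg X$ and $\chi(v\cdot h) = H\cdot\ch_1(v) - \tfrac{1}{2}(H\cdot K_X + H^2)\rk(v)$, together with the linear functionals $\chi(h^2, E) = \deg X\cdot\rk(E)$ and $\chi(h, E) = H\cdot\ch_1(E) - \tfrac{1}{2}(H\cdot K_X + H^2)\rk(E)$. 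Assembling $\chi(u, E) = -\chi(v\cdot h^2)\,\chi(h, E) + \chi(v\cdot h)\,\chi(h^2, E)$ by bilinearity, the canonical-class terms cancel and one obtains
\[ \chi(u, E) = -\deg X\bigl(\rk(v)\,H\cdot\ch_1(E) - (H\cdot\ch_1(v))\,\rk(E)\bigr). \]
Multiplying by the scalar $-\frac{\al}{\rk(v)Z(v)\deg X}$ reproduces exactly the functional of the previous paragraph, which completes the verification.

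The main obstacle is the bookkeeping in the Riemann–Roch step: one must track the Todd class, equivalently the canonical class $K_X$, carefully and confirm that its contributions to $\chi(h, E)$ and to $\chi(v\cdot h)$ cancel in the combination defining $u$. This cancellation is precisely what forces the coefficients $-\chi(v\cdot h^2)$ and $\chi(v\cdot h)$ in the definition of $u$, and it is what makes $u$ the correct representative. A secondary point requiring care is the convention for the pairing $\chi(-,-)$: it must be the one compatible with the Donaldson morphism $\la_\sE(F) = \det Rp_*(\sE\otimes q^*F)$, namely $\chi(a,b) = \chi(X, a\otimes b)$, so that the single-argument Euler characteristics $\chi(v\cdot h)$, $\chi(v\cdot h^2)$ and the two-argument pairing $\chi(u, E)$ are related with the right signs. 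With this convention the $\ch_2(E)$-dependence drops out automatically, since $\rk(u) = 0$, and the $\ch_1(E)$-dependence enters only through $H\cdot\ch_1(E)$, since $\ch_1(u)$ is a multiple of $H$; this is exactly the functional form of the target, so the two sides can only disagree by the scalar fixed above.
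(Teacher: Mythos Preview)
Your proposal is correct and follows essentially the same approach as the paper: both verify the defining identity $\chi(w_Z,-) = \im(-Z(-)/Z(v))$ by a direct Hirzebruch--Riemann--Roch computation of $\chi(u,-)$ and a direct evaluation of $\im Z(-)$ on the vertical wall. The only cosmetic difference is that the paper first replaces $u$ by the numerically equivalent class $\deg(X)\,u'$ with $u' = -\rk(v)\,h + \chi(v\cdot h)\,[\Oh_p]$ before computing Chern characters, whereas you compute $\ch(h^2)$ directly; the resulting formulas and the cancellation of the $K_X$-terms are identical.
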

Note that $Z(v)$ is a negative real number, so $w_Z$ is indeed a positive multiple of $u$. Thus, $\la_\sE(u)$ enjoys the same positivity properties as $\sL_\si = \la_\sE(w_Z)$.
\begin{proof}
    Fix a closed point $p \in X$, and denote the numerical Todd class of $X$ by
    \[ \td_X = 1 - \frac{1}{2} K_X + \chi(\Oh_X) [p]. \]
    We will use the Hirzebruch-Riemann-Roch formula:
    \[ \chi(a \cdot b) = \int_X \ch(a) \ch(b) \td_X \]
    for $a, b \in \Kn(X)$.
    
    Let $a \in K(X)$ be arbitrary. To compute $\chi(a \cdot u)$, we may replace $u$ with something numerically equivalent. Now $h^2 \equiv \deg(X) [\Oh_p]$ by Bertini, and $v \cdot [\Oh_p] = \rk(v)$, so if we set
    \[ u' = - \rk(v) h + \chi(v \cdot h) [\Oh_p] \]
    we can consider the class $\deg(X) u'$ instead of $u$. Moreover, 
    \[ \ch(\Oh_H) = \ch(\Oh_X) - \ch(\Oh_X(-H)) = H - \frac{1}{2} H^2, \quad \ch(\Oh_p) = [p], \]
    so by Hirzebruch-Riemann-Roch,
    \begin{align*}
        \chi(v \cdot h) & = \int_X (\rk(v) + \ch_1(v))\left(H - \frac{1}{2}H^2\right)\left(1 - \frac{1}{2}K_X\right) \\
        & = \ch_1(v) \cdot H - \frac{\rk(v)}{2} H\cdot(H + K_X),
    \end{align*}
    and hence
    \begin{align*}
        \ch(u') & = -\rk(v) \ch(\Oh_H) + \chi(v \cdot [\Oh_H]) \ch(\Oh_p) \\
        & = -\rk(v)\left(H - \frac{1}{2}H^2\right) + \left(\ch_1(v) \cdot H - \frac{\rk(v)}{2} H(H + K_X)\right)[p] \\
        & = -\rk(v) H + \left(\ch_1(v) \cdot H - \frac{\rk(v)}{2} H \cdot K_X\right)[p]
    \end{align*}
    Since $\ch(u')$ is in $A^{\ge 1}(X)$, we only need to know the $A^{\le 1}(X)$-part of $\ch(a)\td_X$, which is
    \[ \ch(a) \td_X = (\ch_0(a) + \ch_1(a)) (1 - \frac{1}{2} K_X) \equiv \ch_0(a) - \frac{1}{2} \ch_0(a) K_X + \ch_1(a). \]
    Putting everything together, we now calculate
    \begin{align*}
        \chi(a \cdot u') & = \int_X \ch(a) \ch(u') \td_X \\
        & = \int_X \left(\ch_0(a) - \frac{1}{2} \ch_0(a) K_X + \ch_1(a)\right)\cdot  \\
        & \quad\left(-\ch_0(v) H + \ch_1(v) H - \frac{1}{2} \ch_0(v) H \cdot K_X\right) \\
        & = H \cdot (\ch_0(a) \ch_1(v) - \ch_0(v) \ch_1(a)) \\
        & = H \cdot (\ch^B_0(a) \ch^B_1(v) - \ch^B_0(v) \ch^B_1(a))
    \end{align*}
    Thus,
    \[ \chi(a \cdot u) = \deg X \chi(a \cdot u') = \deg X \cdot H (\ch^B_0(a) \ch^B_1(v) - \ch^B_0(v) \ch^B_1(a)). \]
    
    Next we calculate $\im(-Z(a)/Z(v))$. Recall that since $\si$ is on the vertical wall for $v$, the quantity $Z(v)$ is a negative real number, and so 
    \[ \im\left(-\frac{Z(a)}{Z(v)}\right) = -\frac{1}{Z(v)} \im Z(a). \]
    Now 
    \begin{align*}
        \im Z(a) & = \al(\be_0 H^2 \ch^B_0(a) - H \cdot \ch^B_1(a)) \\
        & = \al \left( \frac{H \cdot \ch^B_1(v)}{H^2 \ch^B_0(v)} H^2 \ch^B_0(a) - H \cdot \ch^B_1(a) \right) \\
        & = \frac{\al}{\ch^B_0(v)} H (\ch^B_0(a) \ch^B_1(v) - \ch^B_0(v) \ch^B_1(u)).
    \end{align*}
    Thus, we see that
    \[ \im\left(-\frac{Z(a)}{Z(v)}\right) = -\frac{\al}{\rk(v) Z(v) \deg X} \chi(a \cdot u). \]
\end{proof}

%%%%%%%%%%%%%%%%%%%%%%%%%%%%%%%%%%%%%%%%%%%%%
%  PROJECTIVITY OF THE GOOD MODULI SPACE  %%
%%%%%%%%%%%%%%%%%%%%%%%%%%%%%%%%%%%%%%%%%%%%%
\section{Projectivity of the good moduli space}

Let $(X, H)$ be a smooth, projective, polarized surface over $\C$, let $v \in \Kn(X)$ be a numerical class with $\rk(v) > 0$, and let $\si$ be a stability condition lying on the vertical wall for $\si$ considered in Section \ref{section:stabvertwall}, that is,
\[ \si = (\sA, Z) = (\Coh^{\be_0}(X)[-1], -Z_{\al, \be_0}), \; \mathrm{where} \; \be_0 = \frac{H \cdot \ch_B^1(v)}{H^2 \ch_B^0}, \; \al > 0. \]
Let $\sM^\si(v)$ be the stack of $\si$-semistable objects of class $v$ in $\sA$, and let $\sE$ be the universal complex on $\sM^\si(v) \times X$. In this section we prove that the good moduli space $M^\si(v)$ of the stack $\sM^\si(v)$ is projective. 

Recall from Section \ref{positiveLBdefandprops} that $\sL_\si$ denotes the natural nef line bundle on $\sM^\si(v)$ and $L_\si$ the corresponding line bundle on $M^\si(v)$. We will show that $L_\si$ is ample. Since by Lemma \ref{nefdescendtogms}, $L_\si$ is strictly positive on any proper curve in $M^\si(v)$, it suffices to show that $L_\si$ is semiample, meaning that some tensor power is globally generated. Moreover, since by Proposition \ref{vbtogms}, sections of $\sL_\si$ descend to sections of $L_\si$, it suffices to show that $\sL_\si$ is semiample. To produce sections of $\sL_\si$, we expand on techniques used in \cite{li} for constructing a scheme structure on the Uhlenbeck compactification, and in \cite{seshadri} for constructing moduli spaces of vector bundles on a curve. 

The idea is as follows. First, we explain how to obtain a diagram
\begin{center}
    \begin{tikzpicture}
    \matrix (m) [matrix of math nodes, row sep=3em, column sep=0.5em]
    { & \sM^\si(v) \times X & & \sM^\si(v) \times C & \\
    \sM^\si(v) & & X & & C \\};
    \path[left hook->] 
    (m-1-4) edge node[auto,swap] {$ j $} (m-1-2)
    (m-2-5) edge node[auto] {$ i $} (m-2-3)
    ;
    \path[->]
    (m-1-2) edge node[auto,swap] {$ p $} (m-2-1)
    (m-1-2) edge node[pos=0.9,yshift=10pt] {$ q $} (m-2-3)
    (m-1-4) edge node[pos=0.7,yshift=-8pt] {$ p_C $} (m-2-1)
    (m-1-4) edge node[auto] {$ q_C $} (m-2-5)
    ;        
    \end{tikzpicture}
\end{center}
where $C$ is a smooth curve in the linear system $|a H|$ for $a > 0$, together with a locally free sheaf $G$ on $C$, with the property that the determinantal line bundle
\[ \la_{j^* \sE}(G)^\vee = \det(R p_{C*}(j^*\sE \otimes q_C^*G))^\vee \]
is a positive multiple of $\sL_\si$ on $\sM^\si(v)$. 

Next, by analyzing restrictions of $\si$-semistable objects $E$ to $C$, we apply Lemma \ref{detsection}(a) to show that $\la_{j^* \sE}(G)^\vee$ has a canonical global section $\de_G$ on $\sM^\si(v)$. Moreover, we show that for a given $\C$-point $t \in \sM^\si(v)$, we can choose the curve $C$ and the sheaf $G$ so that the section $\de_G$ is nonvanishing at $t$. To do this, recall from Proposition \ref{ss-object-vertical-classification} that the complex $\sE_t$ on $\{t\} \times X = X$ fits in an exact triangle
\[ F \to \sE_t \to T[-1] \]
in $D^b(X)$, where $F$ is a $\mu$-semistable torsion-free sheaf and $T$ is a torsion sheaf with 0-dimensional support. Using a restriction theorem for $\mu$-stability, we show that we can choose $a \gg 0$ and $C \in |a H|$ so that
\begin{enumerate}[(1)]
    \item $C$ avoids the support of $T$, and
    \item the restriction $\sE_t|_C = F|_C$ to $C$ is slope-semistable.
\end{enumerate}
Using a characterization of semistability on a curve due to Faltings and Seshadri, we find a locally free sheaf $G$ on $C$ with the property that 
\[ H^0(C, F|_C \otimes G) = H^1(C, F|_C \otimes G) = 0, \]
and apply Lemma \ref{detsection}(b) to translate this into the nonvanishing of $\de_G$ at $t$.

Finally, by varying $C$ and $G$, we produce a generating set of sections of some power of $\sL_\si$, or equivalently $L_\si$, and use Theorem \ref{gmsexists} and Lemma \ref{gmsnef} to show that the morphism $M^\si(v) \to \p^N$ induced by the sections is finite. From this we conclude that $M^\si(v)$ is projective.

\subsection{Sheaves on curves and the nef line bundle}
We begin to carry out the plan outlined above. To set up some notation, let $C \subs X$ be a smooth, connected curve in the linear system $|a H|$ for some $a > 0$, and consider the diagram:
\begin{center}
    \begin{tikzpicture}
    \matrix (m) [matrix of math nodes, row sep=3em, column sep=0.5em]
    { & \sM^\si(v) \times X & & \sM^\si(v) \times C & \\
    \sM^\si(v) & & X & & C \\};
    \path[left hook->] 
    (m-1-4) edge node[auto,swap] {$ j $} (m-1-2)
    (m-2-5) edge node[auto] {$ i $} (m-2-3)
    ;
    \path[->]
    (m-1-2) edge node[auto,swap] {$ p $} (m-2-1)
    (m-1-2) edge node[pos=0.9,yshift=10pt] {$ q $} (m-2-3)
    (m-1-4) edge node[pos=0.7,yshift=-8pt] {$ p_C $} (m-2-1)
    (m-1-4) edge node[auto] {$ q_C $} (m-2-5)
    ;        
    \end{tikzpicture}
\end{center}
where $p, q, p_C, q_C$ denote the projections, and $i$ and $j$ are closed embeddings. Let $\sE_C \coloneqq j^*\sE$ be the restriction of $\sE$ to $\sM^\si(v) \times C$, which is perfect relative to $\sM^\si(v)$. We have the Donaldson homomorphisms
\[ \la_\sE: K(X) \to \Pic(\sM^\si(v)), \quad \la_{\sE_C}: K(C) \to \Pic(\sM^\si(v)). \]
In addition, for any $n \in \Z$, we have a map $K(X) \to K(X), w \mapsto w(n)$ induced by the map on locally free sheaves $F \mapsto F(n) = F \otimes \Oh_X(n)$. Similarly we have a map $K(X) \to K(C), w \mapsto w|_C$ induced by $F \mapsto F|_C = i^*F$. We denote $h = [\Oh_H] \in K(X)$ as before.

Recall from Proposition \ref{nefclassonverticalwall} that for the class 
\[ u = -\chi(v \cdot h^2) h + \chi(v \cdot h) h^2 \in K(X), \]
the line bundle $\la_\sE(u)$ on $\sM^\si(v)$ is a positive multiple of the natural nef line bundle $\sL_\si$. We first establish the following.
\begin{propdef}\label{nefpowerfromcurves}
    Given an integer $a > 0$ and a smooth, connected curve $C \in |a H|$, define the class
    \[ w \coloneqq -\chi(v \cdot h \cdot [\Oh_C]) \cdot 1 + \chi(v \cdot [\Oh_C]) \cdot h \in K(X). \]
    We have an isomorphism
    \[ \la_{\sE_C}(w|_C) \cong \la_\sE(u)^{a^2}. \]
    Moreover, the class $-w|_C \in K(C)$ has positive rank, and so can be represented by a locally free sheaf $G$ on $C$.
\end{propdef}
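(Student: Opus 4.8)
The plan is to reduce the isomorphism to a purely $K$-theoretic identity in $K(X)$. Write $\sM \coloneqq \sM^\si(v)$. The main structural input is a compatibility between the two Donaldson morphisms: for any $\alpha \in K(X)$ I claim
\[ \la_{\sE_C}(i^*\alpha) \cong \la_\sE(\alpha \cdot [\Oh_C]). \]
To see this, note that $\sM \times C \subs \sM \times X$ is the Cartier divisor cut out by $q^*\Oh_X(aH)$, so its structure sequence gives $[j_*\Oh_{\sM\times C}] = q^*[\Oh_C]$ in $K(\sM \times X)$, using $[\Oh_C] = 1 - [\Oh_X(-aH)]$. Since $p_C = p\circ j$ and $q\circ j = i\circ q_C$, the projection formula for the closed embedding $j$ identifies, in $K$-theory, the class of $R p_{C*}(\sE_C \otimes q_C^* i^*\alpha)$ with that of $R p_*\bigl(\sE \otimes q^*(\alpha\cdot[\Oh_C])\bigr)$; taking determinants of these perfect complexes yields the displayed isomorphism. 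Specializing to $\alpha = w$, it remains only to prove the identity $w\cdot[\Oh_C] = a^2 u$ in $K(X)$, after which the isomorphism follows because $\la_\sE$ is a homomorphism.

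For the $K$-theoretic identity the crucial fact is that $h = [\Oh_H]$ is nilpotent with $h^3 = 0$: indeed $h$ lies in the first step of the codimension-of-support filtration $F^\bullet K(X)$, which is multiplicative and satisfies $F^3 K(X) = 0$ on a surface. Expanding $[\Oh_C] = 1 - (1-h)^a = ah - \binom{a}{2}h^2$ and using $h^3 = 0$, I would compute $h\cdot[\Oh_C] = a h^2$, whence $\chi(v\cdot h\cdot[\Oh_C]) = a\,\chi(v\cdot h^2)$ and $\chi(v\cdot[\Oh_C]) = a\,\chi(v\cdot h) - \binom{a}{2}\chi(v\cdot h^2)$. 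Substituting these into $w$ and multiplying by $[\Oh_C]$, every term involving $h^3$ drops out and the two terms proportional to $\chi(v\cdot h^2)\,h^2$ cancel, leaving $w\cdot[\Oh_C] = a^2\bigl(-\chi(v\cdot h^2)\,h + \chi(v\cdot h)\,h^2\bigr) = a^2 u$, as desired.

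Finally, for the rank statement: $i^*$ preserves rank, and in $w = -\chi(v\cdot h\cdot[\Oh_C])\cdot 1 + \chi(v\cdot[\Oh_C])\,h$ the class $h$ has rank $0$ while $[\Oh_X]$ has rank $1$, so $\rk(-w|_C) = \chi(v\cdot h\cdot[\Oh_C]) = a\,\chi(v\cdot h^2)$. Since $h^2 \equiv \deg(X)[\Oh_p]$ and $\chi(v\cdot[\Oh_p]) = \rk(v)$, this equals $a\deg(X)\rk(v) > 0$. A $K$-theory class of positive rank on a smooth curve is represented by a locally free sheaf, which furnishes $G$.

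The main obstacle is the compatibility of the two Donaldson morphisms: one must push the projection-formula computation through at the level of determinant line bundles of perfect complexes -- not merely numerical classes -- so as to obtain an honest isomorphism. Relatedly, it is essential that $h^3 = 0$ holds \emph{exactly} in $K(X)$ and not just numerically, since this is what makes $w\cdot[\Oh_C] = a^2 u$ an equality of $K$-classes and hence $\la_{\sE_C}(w|_C) \cong \la_\sE(u)^{a^2}$ a genuine isomorphism of line bundles.
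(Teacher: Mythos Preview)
Your approach is essentially the paper's: the compatibility $\la_{\sE_C}(\alpha|_C) \cong \la_\sE(\alpha\cdot[\Oh_C])$ is precisely the paper's auxiliary lemma (proved for $\alpha=[F]$ locally free via the projection formula $j_*(\sE_C\otimes j^*q^*F)\cong j_*\sE_C\otimes q^*F$ and multiplicativity of $\det$ on the triangle coming from $0\to\Oh_X(-aH)\to\Oh_X\to\Oh_C\to 0$, then extended by additivity), and the $K$-theoretic identity $w\cdot[\Oh_C]=a^2u$ and the rank computation are carried out identically. Your residual worry about $h^3=0$ being an \emph{exact} equality in $K(X)$ is easily dispatched: since $h=1-[\Oh_X(-H)]$ depends only on the linear equivalence class, one may compute $h^3=[\Oh_{H_1}]\cdot[\Oh_{H_2}]\cdot[\Oh_{H_3}]$ with $H_1,H_2,H_3\in|H|$ chosen so that $H_1\cap H_2\cap H_3=\varnothing$, and then the derived tensor product is supported on the empty set, hence zero.
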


The proof of the following simple lemma as well as part of the proof of Proposition \ref{nefpowerfromcurves} below are essentially included in the proof of \cite[Proposition 8.2.3]{HL}.

\begin{lem}\label{Ktheorylemma}
    If $C \in |a H|$ is a curve and $w' \in K(X)$ is arbitrary, then
    \[ \la_\sE(w' - w'(-a)) = \la_{\sE_C}(w'|_C). \]
\end{lem}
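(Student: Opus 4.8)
The plan is to reduce the claimed equality of line bundles to a single relation in $K(X)$ together with a projection-formula computation, using throughout that the Donaldson morphism $\la_\sE$ is a group homomorphism and that the line bundle $\det R p_*(\sE \otimes q^*(-))$ depends only on the resulting class in the $K$-theory of $\sM^\si(v) \times X$.

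First I would record the $K$-theoretic input. Since $C \in |aH|$, the structure sequence
\[ 0 \to \Oh_X(-a) \to \Oh_X \to \Oh_C \to 0 \]
yields $[\Oh_C] = 1 - [\Oh_X(-a)]$ in $K(X)$. As $w'(-a) = w' \cdot [\Oh_X(-a)]$ by definition of the twist, multiplying by $w'$ gives $w' \cdot [\Oh_C] = w' - w'(-a)$. Because $\la_\sE$ is a homomorphism, it is therefore enough to prove
\[ \la_\sE(w' \cdot [\Oh_C]) = \la_{\sE_C}(w'|_C). \]

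Next I would compute the left-hand side geometrically. Pulling back along the flat projection $q$ and using that $C$ is a Cartier divisor, one has $q^*[\Oh_C] = [\Oh_{\sM^\si(v) \times C}] = [j_*\Oh_{\sM^\si(v) \times C}]$. The projection formula for the closed immersion $j$, together with $j^*\sE = \sE_C$ and the relation $q \circ j = i \circ q_C$ (whence $j^*q^*w' = q_C^*(w'|_C)$), then gives an isomorphism
\[ \sE \otimes q^*w' \otimes j_*\Oh_{\sM^\si(v) \times C} \cong j_*\bigl(\sE_C \otimes q_C^*(w'|_C)\bigr). \]
Applying $R p_*$ and using $p \circ j = p_C$, the pushforward of the right-hand side is $R p_{C*}(\sE_C \otimes q_C^*(w'|_C))$, whose determinant is by definition $\la_{\sE_C}(w'|_C)$. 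This proves the displayed identity and hence the lemma.

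The step requiring the most care is ensuring that each manipulation is valid at the level of perfect complexes (equivalently their $K$-classes) relative to the base $\sM^\si(v)$: namely that $\sE_C = j^*\sE$ is perfect relative to $\sM^\si(v)$, as recorded in the setup, and that the projection formula and flat base change hold in the derived category over a base that may be an algebraic stack. Since $\det R p_*$ factors through $K$-theory, no finer compatibility of the actual complexes is needed beyond these standard identities.
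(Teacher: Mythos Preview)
Your proof is correct and follows essentially the same approach as the paper: both arguments use the structure sequence $0 \to \Oh_X(-a) \to \Oh_X \to \Oh_C \to 0$ together with the projection formula for the closed immersion $j$ and the identity $p \circ j = p_C$. The only cosmetic difference is that the paper first reduces by linearity to the case $w' = [F]$ with $F$ locally free and then applies the projection formula and the exact triangle, whereas you carry out the same steps at the level of $K$-classes from the start; the underlying ideas are identical.
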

\begin{proof}
    Both sides of the equation are linear in $w'$, so it suffices to consider the class $w' = [F]$ for a locally free sheaf $F$ on $X$. On the one hand, we have
    \begin{align*}
        \la_{\sE_C}(F|_C) & = \det R p_{C*}(\sE_C \otimes q_C^* i^* F) \\
        & = \det R p_* j_*(\sE_C \otimes j^* q^* F) \\
        & = \det R p_* (j_*\sE_C \otimes q^* F). \qquad \mathrm{(projection\;formula)}
    \end{align*}
    On the other hand, pulling back the short exact sequence 
    \[ 0 \to \Oh_X(-a) \to \Oh_X \to j_* \Oh_C \to 0 \]
    along $q$, tensoring with $\sE \otimes q^* F$, and applying $R p_*$ gives the exact triangle
    \[ R p_* (\sE \otimes q^*(F \otimes \Oh_X(-a))) \to R p_* (\sE \otimes q^*F) \to R p_* (j_* \sE_C \otimes q^*F) \]
    in $D^b(\sM^\si(v))$, and so we obtain an isomorphism
    \[ \det R p_* (j_* \sE_C \otimes q^*F) \cong \la_\sE(F) \otimes \la_\sE(F(-a))^\vee = \la_\sE(w - w(-a)). \]
\end{proof}

\begin{proof}[Proof of Proposition \ref{nefpowerfromcurves}]
    By Lemma \ref{Ktheorylemma}, for the first statement it is enough to show that
    \[ w - w(-a) = a^2 u. \]
    Since $[\Oh_X(-1)] = 1 - h \in K(X)$, we have
    \[ [\Oh_X(-a)] = [\Oh_X(-1)]^a = (1 - h)^a = 1 - a h + \binom{a}{2} h^2, \]
    so the short exact sequence
    \[ 0 \to \Oh_X(-a) \to \Oh_X \to \Oh_C \to 0 \]
    gives $[\Oh_C] = 1 - [\Oh_X(-a)] = a h - \binom{a}{2} h^2$. In particular, $h \cdot [\Oh_C] = a h^2$. We now calculate
    \begin{align*}
        w - w(-a) & = w \cdot [\Oh_C] \\
        & = (-\chi(v \cdot h \cdot [\Oh_C]) \cdot 1 + \chi(v \cdot [\Oh_C]) \cdot h) (a h - \binom{a}{2} h^2) \\
        & = - \chi(v \cdot ah^2) (a h - \binom{a}{2} h^2) + \chi\left(v \cdot (a h - \binom{a}{2} h^2)\right) a h^2 \\
        & = a^2 ( - \chi(v \cdot h^2) h + \chi(v \cdot h) h^2) = a^2 u.
    \end{align*}
    For the second claim, we note that 
    \[ -w|_C = \chi(v|_C \cdot h|_C) \cdot 1 - \chi(v|_C) \cdot h|_C = a \rk(v) \deg X \cdot 1 - \chi(v|_C) \cdot h|_C, \]
    and so 
    \[ \rk(w|_C) = a \rk(v) \deg X \rk(1) - \chi(v|_C) \rk(h|_C) = a \rk(v) \deg(X) > 0. \] 
    Since $C$ is smooth, projective, and connected, the natural map 
    \[ K(C) \to \Z \oplus \Pic(C), \quad [F] \mapsto (\rk F, \det F) \] 
    is an isomorphism. Moreover, any class $(m, L) \in \Z \oplus K(C)$ with $m > 0$ can be represented by a locally free sheaf: take for instance $G = \Oh_C^{\oplus m-1} \oplus L$. In particular, there exist locally free sheaves $G$ of class $- w|_C$ on $C$.
\end{proof}

\subsection{Producing sections}
Our next task is to show that the construction of Proposition \ref{nefpowerfromcurves} yields a canonical section $\de_G$ of the line bundle $\la_{\sE_C}(G)^\vee \cong \la_\sE(u)^{m a^2}$ on $\sM^\si(v)$, and that by choosing $C$ and $G$ carefully, this section is nonzero at a given point $t \in \sM^\si(v)$. After some preparations, we prove this in Proposition \ref{globalgen}. In the proof, Lemmas \ref{restsingle} and \ref{hypercohovanishing} will be used to apply Lemma \ref{detsection}(a), and Lemmas \ref{restsemistable} and \ref{seshadrimainlemma} to apply Lemma \ref{detsection}(b).

\begin{lem}\label{restsingle} 
    Let $X$ be a smooth, projective surface and $C \subs X$ a smooth, projective curve. Assume $E \in D^b(X)$ fits in a triangle
    \[ F \to E \to T[-1], \]
    where $F$ is a torsion-free sheaf and $T$ is a torsion sheaf with 0-di\-men\-sion\-al support. The derived restriction $E|_C^\LL$ fits in a triangle
    \[ \sH^0(E|_C^\LL) \to E|_C^\LL \to \sH^1(E|_C^\LL)[-1] \]
    in $D^b(C)$, where $\sH^1(E|_C^\LL)$ is a torsion sheaf.
\end{lem}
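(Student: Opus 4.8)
The plan is to apply the (triangulated) derived restriction functor $Li^{*}=(-)\otimes^{\LL}_{\Oh_X}\Oh_C$, where $i\colon C\hookrightarrow X$ is the inclusion, to the given triangle. Since $Li^{*}$ commutes with shifts and sends triangles to triangles, it produces a triangle
\[ F|_C^{\LL}\to E|_C^{\LL}\to T|_C^{\LL}[-1] \]
in $D^b(C)$, so it suffices to understand the two outer terms separately and then assemble the cohomology of the middle term via the long exact sequence. For both computations I would exploit that $C$ is an effective Cartier divisor, so $\Oh_C$ has the two-term locally free resolution
\[ 0\to \Oh_X(-C)\xrightarrow{\,s\,}\Oh_X\to\Oh_C\to 0, \]
where $s$ is the canonical section cutting out $C$. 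Tensoring a coherent sheaf $S$ with this resolution presents $S|_C^{\LL}$ as the two-term complex $[\,S(-C)\xrightarrow{s}S\,]$ placed in degrees $-1,0$, whose cohomology sheaves are $\ker(s)$ in degree $-1$ and $\operatorname{coker}(s)=S|_C$ in degree $0$.

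The key step is the analysis of $F|_C^{\LL}$. Here I claim $\sH^{-1}(F|_C^{\LL})=\ker\!\big(s\colon F(-C)\to F\big)=0$. Indeed, $s$ is locally multiplication by a nonzerodivisor, namely a local equation of the reduced divisor $C$ on the integral surface $X$, and a nonzerodivisor acts injectively on the stalks of a torsion-free sheaf, since any local section it annihilated would be a torsion section of $F$ and hence zero. Thus $F|_C^{\LL}\cong F|_C$ is an honest coherent sheaf concentrated in degree $0$. This vanishing, which is precisely where torsion-freeness of $F$ is used, is the main point of the proof; everything else is bookkeeping.

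For the torsion term, applying the same presentation to $T$ shows that $T|_C^{\LL}$ has cohomology only in degrees $-1$ and $0$, given by $\ker(s)$ and $\operatorname{coker}(s)$. Both are subquotients of a twist of $T$, hence are again sheaves with $0$-dimensional support; after the shift, $T|_C^{\LL}[-1]$ therefore has $0$-dimensional torsion cohomology concentrated in degrees $0$ and $1$. Feeding these two computations into the long exact sequence of cohomology sheaves attached to the triangle above, the vanishing of $\sH^{i}(F|_C^{\LL})$ for $i\neq 0$ and of $\sH^{i}(T|_C^{\LL}[-1])$ for $i\neq 0,1$ forces $\sH^{i}(E|_C^{\LL})=0$ for $i\neq 0,1$ and yields an isomorphism $\sH^{1}(E|_C^{\LL})\cong\sH^{1}(T|_C^{\LL}[-1])$, which is $0$-dimensional and in particular torsion on $C$. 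Finally, since $E|_C^{\LL}$ has cohomology concentrated in degrees $0$ and $1$, the asserted triangle is simply its standard truncation triangle $\tau_{\le 0}E|_C^{\LL}\to E|_C^{\LL}\to\tau_{\ge 1}E|_C^{\LL}$, which under these concentration conditions reads $\sH^{0}(E|_C^{\LL})\to E|_C^{\LL}\to\sH^{1}(E|_C^{\LL})[-1]$.
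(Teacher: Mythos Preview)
Your proof is correct and follows essentially the same approach as the paper: apply derived restriction to the given triangle, compute $F|_C^{\LL}$ and $T|_C^{\LL}$ via the two-term resolution $[\Oh_X(-C)\to\Oh_X]$ of $\Oh_C$, use torsion-freeness of $F$ to kill $\sH^{-1}(F|_C^{\LL})$, and read off the cohomology of $E|_C^{\LL}$ from the long exact sequence. Your final remark that the asserted triangle is the truncation triangle is a nice clarification that the paper leaves implicit.
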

\begin{proof}
Derived restriction to $C$ is a functor of triangulated categories, so we obtain a triangle
\[ F|_C^\LL \to E|_C^\LL \to T|_C^\LL[-1], \]
which yields a long exact sequence of cohomology sheaves
\[ \cdots \to \sH^i(F|_C^\LL) \to \sH^i(E|_C^\LL) \to \sH^i(T|_C^\LL[-1]) \to \sH^{i+1}(F|_C^\LL) \to \cdots. \]
To understand the terms in this sequence, we study the derived restrictions $F|_C^\LL$ and $T|_C^\LL$. Since pushforward of coherent sheaves along the inclusion $C \hookrightarrow X$ is exact, we may as well study the derived tensor products $F \otimes^\LL \Oh_C$ and $T \otimes^\LL \Oh_C$, where $\Oh_C$ is the structure sheaf of $C$ viewed as an $\Oh_X$-module. 

Since $C \subs X$ is a Cartier divisor, $\Oh_C$ has a resolution by line bundles
\[ 0 \to \Oh_X(-C) \xrightarrow{f} \Oh_X \to \Oh_C \to 0. \]
Thus, the objects $F \otimes^\LL \Oh_C$ and $T \otimes^\LL \Oh_C$ are represented by the complexes
\[ \sF_\bullet = [F(-C) \xrightarrow{f} F] \qquad \mathrm{and} \qquad \sT_\bullet = [T(-C) \xrightarrow{f} T] \]
respectively, where $F$ and $T$ are placed in degree 0. 

Since $F$ is by assumption torsion-free, the map $f: F(-C) \to F$ is injective. Thus, we see that $\sH^0(F|_C^\LL) = \sH^0(\sF_\bullet) = F|_C$ agrees with the ordinary restriction, and $\sH^i(F|_C^\LL) = \sH^i(\sF_\bullet) = 0$ for $i \neq 0$. Moreover, from $\sT_\bullet$ we see that $\sH^{-1}(T|_C^\LL) = \sH^{-1}(\sT_\bullet)$ is a subsheaf of $T(-C) \cong T$, and $\sH^0(T|_C^\LL) = \sH^0(\sT_\bullet)$ is a quotient of $T$, hence both are 0-dimensional, and $\sH^i(T|_C^\LL) = 0$ for $i \neq -1, 0$.

We now return to the triangle
\[ F|_C^\LL \to E|_C^\LL \to T|_C^\LL[-1] \]
at the beginning of the proof. Taking into account the shift in the last term, we obtain an exact sequence of cohomology sheaves
\[ 0 \to F|_C \to \sH^0(E|_C^\LL) \to \sH^{-1}(T|_C^\LL) \to 0 \to \sH^1(E|_C^\LL) \to \sH^0(T|_C^\LL) \to 0, \]
and also see that $\sH^i(E|_C^\LL) = 0$ if $i \neq 0,1$. In particular, $E|_C^\LL$ is supported in degrees $0$ and $1$, and $\sH^1(E|_C^\LL) \cong \sH^0(T|_C^\LL)$ is a torsion sheaf on $C$.
\end{proof}

\begin{lem}\label{hypercohovanishing}
    If $C$ is a projective curve and $E \in D^b(C)$ fits into a triangle
    \[ F \to E \to T[-1], \]
    with $F, T \in \Coh(C)$ and $T$ has 0-dimensional support, then the hypercohomology groups of $E$ satisfy
    \[ \Hh^i(C, E) = 0 \qquad \mathrm{for} \; i \neq 0, 1. \]
\end{lem}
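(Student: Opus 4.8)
The plan is to apply the hypercohomology functors $\Hh^i(C,-)$, which form a cohomological functor on $D^b(C)$, to the triangle $F \to E \to T[-1]$ and to read the desired vanishing off the resulting long exact sequence. So the whole argument reduces to understanding the hypercohomology of the two outer terms and then invoking exactness.

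First I would record the hypercohomology of $F$ and of $T[-1]$. Since $F$ is a coherent sheaf on the curve $C$, which has cohomological dimension $1$, the complex $R\Ga(C, F)$ is concentrated in degrees $0$ and $1$, so that $\Hh^i(C, F) = H^i(C,F) = 0$ for all $i \neq 0, 1$. Since $T$ has $0$-dimensional support, it is acyclic for global sections in positive degrees, so $R\Ga(C, T)$ is concentrated in degree $0$; taking the shift into account, $R\Ga(C, T[-1])$ is then concentrated in degree $1$, and hence $\Hh^i(C, T[-1]) = 0$ for all $i \neq 1$.

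Next I would write down the long exact sequence associated to the triangle,
\[ \cdots \to \Hh^i(C, F) \to \Hh^i(C, E) \to \Hh^i(C, T[-1]) \to \Hh^{i+1}(C, F) \to \cdots, \]
and observe that for every $i \neq 0, 1$ both flanking terms $\Hh^i(C, F)$ and $\Hh^i(C, T[-1])$ vanish by the previous step: for $i < 0$ neither $F$ nor the shifted $T[-1]$ contributes, and for $i \geq 2$ the same holds. Exactness then forces $\Hh^i(C, E) = 0$ for all such $i$, which is exactly the claim.

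I do not expect any real obstacle here; the content is entirely formal once the cohomological amplitudes of $F$ and $T$ are pinned down. The only point requiring a little care is correctly tracking the degree shift in the term $T[-1]$ (so that its single nonzero hypercohomology group lands in degree $1$, the same degree where $\Hh^1(C,E)$ is allowed to be nonvanishing) and invoking the correct dimension bounds, namely cohomological dimension $1$ for the sheaf $F$ and cohomological dimension $0$ for the $0$-dimensional sheaf $T$.
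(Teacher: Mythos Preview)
Your proposal is correct and follows essentially the same argument as the paper: both use the long exact hypercohomology sequence of the triangle, together with the vanishing $\Hh^i(C,F)=0$ for $i\neq 0,1$ and $\Hh^i(C,T[-1])=0$ for $i\neq 1$, to conclude.
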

\begin{proof}
    We have a long exact sequence of hypercohomology groups
    \[ \cdots \to \Hh^i(C, F) \to \Hh^i(C, E) \to \Hh^i(C, T[-1]) \to \Hh^{i+1}(C, F) \to \cdots. \]
    Since $\Hh^i(C, F) = 0$ whenever $i \neq 0, 1$, and 
    \[ \Hh^i(C, T[-1]) = \Hh^{i-1}(C, T) = 0 \] 
    whenever $i \neq 1$, the group $\Hh^i(C, E)$ can be nonzero only if $i = 0,1$.
\end{proof}

We pause to recall that $\si = (\sA, Z)$ denotes a stability condition on the vertical wall for the class $v \in \Kn(X)$, and that by Proposition \ref{ss-object-vertical-classification}, any $\si$-semistable object $E \in \sA$ of class $v$ fits in a triangle
\[ F \to E \to T[-1] \]
where $F$ is a $\mu$-semistable torsion-free sheaf and $T$ is a torsion sheaf with 0-dimensional support. 

\begin{lem}\label{restsemistable}
    If $E \in \sA \subs D^b(X)$ is a $\si$-semistable object of class $v$, there exists a smooth, projective, connected curve $C \subs X$ in the linear system $|a H|$ for $a \gg 0$ such that the derived restriction of $E$ to $C$ is a slope-semistable locally free sheaf.    
\end{lem}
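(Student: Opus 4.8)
The plan is to take $C$ to be a sufficiently general member of $|aH|$ with $a \gg 0$, chosen to avoid a finite ``bad'' locus while restricting $F$ to a semistable bundle. By Proposition \ref{ss-object-vertical-classification}(i), the object $E$ sits in a triangle $F \to E \to T[-1]$ with $F$ a $\mu$-semistable torsion-free sheaf and $T$ a torsion sheaf supported in dimension $0$. Since $X$ is a smooth surface, the reflexive hull $F^{\vee\vee}$ is locally free and the cokernel $Q$ of $F \hookrightarrow F^{\vee\vee}$ is $0$-dimensional, so $F$ is locally free away from the finite set $B_F = \mathrm{Supp}(Q)$. I would set $\Sigma = B_F \cup \mathrm{Supp}(T)$, a finite subset of $X$, and aim to produce $C$ disjoint from $\Sigma$ whose restriction of $F$ is slope-semistable.

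First I would secure the purely geometric conditions. For $a \gg 0$ the linear system $|aH|$ is very ample and separates points, so by Bertini a general $C \in |aH|$ is smooth, while avoiding each of the finitely many points of $\Sigma$ cuts out a dense open subset of $|aH|$; both requirements therefore hold on a dense open. Moreover, enlarging $a$ so that Serre vanishing gives $H^1(X, \Oh_X(-aH)) = 0$, the sequence $0 \to \Oh_X(-C) \to \Oh_X \to \Oh_C \to 0$ shows $H^0(C, \Oh_C) = \C$, whence every member of $|aH|$ is connected.

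The one genuinely nontrivial ingredient---and the main obstacle---is a restriction theorem for slope-stability. Here I would invoke the Mehta--Ramanathan theorem (see e.g.\ \cite[Section 7.2]{HL}): since $F$ is $\mu$-semistable with respect to $H$, there is an $a_0$ so that for every $a \ge a_0$ the restriction $F|_C$ is slope-semistable for $C$ ranging over a dense open subset of $|aH|$. Choosing $a \ge a_0$ and $C$ in the intersection of this open set with the one above yields a smooth, connected curve $C \in |aH|$ disjoint from $\Sigma$ with $F|_C$ slope-semistable; since $C$ avoids $B_F$, the sheaf $F|_C$ is also locally free.

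It remains to identify $E|_C^\LL$ with $F|_C$. Because $C \cap \mathrm{Supp}(T) = \emptyset$, the derived tensor product $T \otimes^\LL \Oh_C$ has cohomology sheaves supported on $\mathrm{Supp}(T) \cap C = \emptyset$ and hence vanishes, so $T|_C^\LL = 0$. Applying $(-)|_C^\LL$ to the triangle $F \to E \to T[-1]$ then gives $E|_C^\LL \cong F|_C^\LL = F|_C$, where the second isomorphism uses the injectivity of $F(-C) \to F$ exactly as in the proof of Lemma \ref{restsingle}. Thus $E|_C^\LL = F|_C$ is a slope-semistable locally free sheaf on $C$, as required. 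Everything beyond the restriction theorem---the general-position statements for smoothness, connectedness, and avoidance of $\Sigma$---is routine.
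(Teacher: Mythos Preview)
Your proof is correct and follows essentially the same strategy as the paper's: choose $C$ general in $|aH|$ for $a \gg 0$ so that it avoids the $0$-dimensional support of $T$ and so that $F|_C$ is slope-semistable by a restriction theorem, then conclude $E|_C^\LL \cong F|_C$. The paper invokes Flenner's restriction theorem \cite[Theorem 7.1.1]{HL} (with its explicit bound on $a$) rather than Mehta--Ramanathan, and does not separately argue connectedness or avoidance of the non-locally-free locus of $F$; these are cosmetic differences, and if anything your version is slightly more careful on the latter points.
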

\begin{proof} 
We use Flenner's restriction theorem, \cite[Theorem 7.1.1]{HL}. Specialized to the case at hand, it states the following. Let $a$ be an integer satisfying
\[ \frac{\binom{a+2}{a} - a - 1}{a} = \frac{a+1}{2} > \deg(X) \cdot \max\left\{\frac{r^2 - 1}{4}, 1\right\}. \]
If $F$ is a $\mu$-semistable sheaf of rank $r = \rk(v)$, then there is a nonempty open subset $U$ in the complete linear system $|aH|$, such that every $C \in U$ is smooth and the restriction of $F$ to $C$ is semistable. Note that since $F$ is torsion-free, its derived and ordinary restriction to $C$ agree.

Now let $E \in \sA$ be a $\si$-semistable object of class $v$ fitting in an exact triangle
\[ F \to E \to T[-1] \]
as above. For any smooth curve $C \subs X$, derived restriction to $C$ gives an exact triangle
\[ F|_C^\LL \to E|_C^\LL \to T|_C^\LL[-1]. \]
If $C$ does not pass through the finitely many closed points $p_1,\ldots,p_m$ in the support of $T$, then $T|_C^\LL = 0$, and thus $F|_C^\LL \cong E|_C^\LL$. Now for each point $p_i$, the subset in $|a H|$ of curves not passing through $p_i$ is a nonempty open subset $U_i \subs |a H|$. Since $|a H|$ is irreducible, the intersection $U' = U \cap U_1 \cap \cdots \cap U_m$ is also nonempty, and any curve $C \in U'$ has the desired property. 
\end{proof}

The following powerful result of Faltings and Seshadri is key to proving global generation of our line bundle. See \cite[Lemma 3.1, Remark 3.2]{seshadri} for a proof. Note that if $E$ and $F$ are locally free sheaves on a smooth, projective curve $C$ of genus $g$, then the Riemann-Roch theorem states
\begin{align*} 
    \chi(C, E \otimes F) & = \deg(E \otimes F) + \rk(E \otimes F)(1-g) \\ 
    & = \rk(E) \deg(F) + (\deg(E) + \rk(E)(1-g))\rk(F).
\end{align*}
\begin{lem}\label{seshadrimainlemma}
    Let $C$ be a smooth, projective, connected curve of genus $g \ge 2$, and let $F$ be a slope-semistable locally free sheaf on $F$. Let $r >0$ and $d$ be integers such that
    \[ r \deg F + (d + r(1-g)) \rk F = 0. \]
    If $r$ is sufficiently large, then for any line bundle $L$ of degree $d$, there exists a locally free sheaf $E$ with $\rk E = r$ and $\det E \cong L$, and
    \[ H^0(C, E \otimes F) = H^1(C, E \otimes F) = 0. \]
\end{lem}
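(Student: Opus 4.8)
The plan is to construct the bundle $E$ directly as an extension, so that no auxiliary moduli space or semicontinuity argument is required. First observe that, by the Riemann--Roch formula quoted just above, the hypothesis $r \deg F + (d + r(1-g))\rk F = 0$ is precisely the statement $\chi(C, E \otimes F) = 0$ for any locally free sheaf $E$ of rank $r$ and degree $d$. Consequently $H^0(C, E \otimes F) = 0$ if and only if $H^1(C, E \otimes F) = 0$, and it suffices to exhibit a single locally free $E$ of rank $r$ with $\det E \cong L$ and $H^0(C, E \otimes F) = 0$.

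I would build $E$ as an extension
\[ 0 \to A \to E \to B \to 0, \]
where $A = \bigoplus_{i=1}^a A_i$ and $B = \bigoplus_{j=1}^b B_j$ are direct sums of line bundles with $a + b = r$. Writing $\mu_F = \deg F / \rk F$, I would take each $A_i$ of degree $< -\mu_F$ and each $B_j$ of degree $> 2g-2-\mu_F$, subject to $\sum_i \deg A_i + \sum_j \deg B_j = d$ and $\bigotimes_i A_i \otimes \bigotimes_j B_j \cong L$. Since $F$ is slope-semistable, each $A_i \otimes F$ is semistable of negative slope, so $H^0(C, A \otimes F) = 0$, and each $B_j \otimes F$ is semistable of slope $> 2g-2$, so $H^1(C, B \otimes F) = 0$ by Serre duality. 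The feasibility of the degree constraints — spreading integer degrees below $-\mu_F$ and above $2g-2-\mu_F$ while keeping their average equal to $d/r = g-1-\mu_F$ and their product equal to $L$ — is exactly what forces ``$r$ sufficiently large''; the threshold $-\mu_F$ sits strictly below $g-1-\mu_F$, which sits strictly below $2g-2-\mu_F$, each at distance $g-1$, so both ranges are available precisely because $g \ge 2$. The surjectivity of $\prod \mathrm{Pic}^{\deg}(C) \to \mathrm{Pic}^d(C)$ lets us realize any prescribed $L$.

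Tensoring the extension by the locally free sheaf $F$ and inserting the two vanishings, the long exact sequence collapses to
\[ 0 \to H^0(C, E \otimes F) \to H^0(C, B \otimes F) \xrightarrow{\delta} H^1(C, A \otimes F) \to H^1(C, E \otimes F) \to 0, \]
where the connecting map $\delta$ is cup product with the extension class $e \in H^1(C, B^\vee \otimes A)$. Comparing Euler characteristics, $0 = \chi(C, E \otimes F) = -\dim H^1(C, A \otimes F) + \dim H^0(C, B \otimes F)$, so the source and target of $\delta$ have equal dimension. Thus $H^0(C, E \otimes F) = H^1(C, E \otimes F) = 0$ as soon as $\delta$ is an isomorphism, and the resulting $E$ has rank $r$ and $\det E \cong \det A \otimes \det B \cong L$ as desired.

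The main obstacle is therefore to produce an extension class $e$ for which $\delta = \cup\, e$ is an isomorphism between these two equal-dimensional spaces. I would prove this by showing that the linear map $H^1(C, B^\vee \otimes A) \to \mathrm{Hom}(H^0(C, B \otimes F), H^1(C, A \otimes F))$ sending $e \mapsto \cup\, e$ is surjective; since the invertible homomorphisms form a nonempty Zariski-open subset of the target, surjectivity produces a good $e$. By Serre duality this surjectivity becomes the statement that a multiplication map on spaces of global sections of suitably positive line bundles on $C$ is surjective, which holds once the degrees of the $B_j$ exceed those of the $A_i$ by a large enough margin — once again guaranteed by taking $r$ large. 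This non-degeneracy of the cup-product pairing is the technical heart of the argument and is exactly the point at which the Faltings--Seshadri input enters.
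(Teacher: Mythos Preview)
The paper does not give its own proof of this lemma: it is stated as a black-box result with a citation to \cite[Lemma 3.1, Remark 3.2]{seshadri}. Your proposal is therefore not to be compared with anything in the paper, but rather with the Faltings--Seshadri argument itself, and in outline it is exactly that argument: build $E$ as a general extension of a ``very negative'' direct sum of line bundles by a ``very positive'' one, so that the long exact sequence collapses to the connecting map $\delta$, and then show that $\delta$ is an isomorphism for a generic extension class.

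One point of imprecision is worth flagging. You describe the final step as reducing, via Serre duality, to the surjectivity of ``a multiplication map on spaces of global sections of suitably positive line bundles on $C$''. That is not quite right: after dualizing, the map in question is
\[
H^0(B_j \otimes F) \otimes H^0(K_C \otimes A_i^{\vee} \otimes F^{\vee}) \longrightarrow H^0(K_C \otimes B_j \otimes A_i^{\vee}),
\]
which is a tensor-then-trace map genuinely involving $F$ and $F^{\vee}$, not a multiplication of sections of line bundles. Its injectivity (equivalently, the surjectivity you need) cannot be read off from classical results on multiplication maps alone and is precisely the content of Seshadri's Lemma~3.1. You do acknowledge in your last sentence that this non-degeneracy is ``the technical heart'' and ``exactly the point at which the Faltings--Seshadri input enters'', so your outline is honest about where the work lies; just be aware that the preceding sentence oversells how standard that step is.
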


\begin{rmk}
    The converse of Lemma \ref{seshadrimainlemma} also holds. More precisely, if $F$ is a coherent sheaf on a smooth, projective curve $C$ and there exists a locally free sheaf $E$ such that
    \[ H^0(C, E \otimes F) = H^1(C, E \otimes F) = 0, \]
    then $F$ is slope-semistable. See \cite[Theorem 2.13]{MS} for a proof.
\end{rmk}

With these preparations, we are ready to prove that for each $\C$-point $t \in \sM^\si(v)$, some power of $\la_\sE(u)$ has a global section not vanishing at $t$. 

\begin{prop}\label{globalgen}
    Let $u \in K(X)$ be as in Proposition \ref{nefclassonverticalwall}. For every $\C$-point $t_0 \in \sM^\si(v)$, there exist integers $a, m > 0$ and a global section of the line bundle $\la_\sE(u)^{m a^2}$ on $\sM^\si(v)$ that does not vanish at $t_0$.
\end{prop}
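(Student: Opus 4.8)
The plan is to produce the desired section by restricting to a curve and invoking the Faltings--Seshadri lemma. Write $E_0 = \sE_{t_0}$ for the $\si$-semistable object of class $v$ corresponding to $t_0$. By Proposition \ref{ss-object-vertical-classification} it fits in a triangle $F \to E_0 \to T[-1]$ with $F$ a $\mu$-semistable torsion-free sheaf and $T$ a $0$-dimensional torsion sheaf. First I would apply Lemma \ref{restsemistable} to choose an integer $a \gg 0$ and a smooth, connected curve $C \in |aH|$ avoiding the support of $T$ and such that $E_0|_C^\LL \cong F|_C$ is a slope-semistable locally free sheaf; enlarging $a$ if necessary, adjunction gives $2g-2 = aH\cdot(aH+K_X) > 0$, so $C$ has genus $g \ge 2$. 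For this curve, Proposition \ref{nefpowerfromcurves} supplies a class $-w|_C \in K(C)$ of positive rank $\rho = a\,\rk(v)\deg X$ together with the crucial identity $\la_{\sE_C}(w|_C) \cong \la_\sE(u)^{a^2}$, so that $\la_{\sE_C}(-w|_C)^\vee = \la_\sE(u)^{a^2}$.

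Next I would manufacture a locally free sheaf $G$ on $C$ representing a suitable multiple of $-w|_C$. Fix $m \gg 0$ and apply Lemma \ref{seshadrimainlemma} to the slope-semistable bundle $F|_C$ with target rank $r = m\rho$: the numerical hypothesis $r\deg(F|_C) + (d + r(1-g))\rk(F|_C) = 0$ is exactly the vanishing of $\chi(C, F|_C \otimes G)$ for the class $m(-w|_C)$, which indeed holds because, by the projection formula $\chi_C(i^*\xi \cdot i^*\eta) = \chi_X(\xi\cdot\eta\cdot[\Oh_C])$ and the identity $w\cdot[\Oh_C] = a^2 u$ from Proposition \ref{nefpowerfromcurves}, one computes $\chi(C, F|_C \otimes [m(-w|_C)]) = -ma^2\,\chi(v\cdot u) = 0$, the last equality being immediate from the computation of $\chi(\gamma\cdot u)$ in Proposition \ref{nefclassonverticalwall} with $\gamma = v$. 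For $m$ large the rank $r = m\rho$ exceeds Seshadri's threshold, so taking the determinant to be the line bundle $\det(m(-w|_C))$ produces a locally free sheaf $G$ with $[G] = -mw|_C$ and $H^0(C, F|_C \otimes G) = H^1(C, F|_C \otimes G) = 0$.

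Finally I would feed this into Lemma \ref{detsection}, applied with $C$ in place of $X$, with $S = \sM^\si(v)$, the $S$-perfect family $\sE_C$, and the locally free sheaf $G$. For every $\C$-point $t$, the object $\sE_t$ has the form of Proposition \ref{ss-object-vertical-classification}, so by Lemma \ref{restsingle} the derived restriction $(\sE_C)_t = \sE_t|_C^\LL$ sits in a triangle whose outer terms are a coherent sheaf and a $0$-dimensional torsion sheaf; tensoring with $G$ and applying Lemma \ref{hypercohovanishing} gives $\Hh^i(C, (\sE_C)_t \otimes G) = 0$ for $i \neq 0,1$, while the Euler characteristic vanishes for all $t$ by the computation above. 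Hence Lemma \ref{detsection}(a) yields a global section $\de_G$ of $\la_{\sE_C}(G)^\vee = \la_\sE(u)^{ma^2}$. Since $C$ avoids the support of $T$ we have $(\sE_C)_{t_0} = F|_C$, and the Seshadri vanishing lets Lemma \ref{detsection}(b) conclude that $\de_G$ is nonzero at $t_0$, as required.

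The main obstacle is the interplay of constraints in the middle step. The determinantal line bundle is pinned to $\la_\sE(u)^{a^2}$ only for the specific class $-w|_C$, whose rank $\rho$ is fixed by the geometry, whereas Lemma \ref{seshadrimainlemma} guarantees the vanishing only once the rank is ``sufficiently large.'' Reconciling these is precisely what forces the extra factor $m$: passing to $m(-w|_C)$ keeps the Euler characteristic zero---so that both the existence of the section and Seshadri's numerical hypothesis persist---and scales the line bundle to $\la_\sE(u)^{ma^2}$ while pushing the rank past Seshadri's threshold. Checking that this rescaling simultaneously preserves the determinantal identity and the required vanishing is the technical heart of the argument.
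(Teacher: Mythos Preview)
Your proposal is correct and follows essentially the same approach as the paper: choose $C \in |aH|$ via Lemma \ref{restsemistable}, invoke the Faltings--Seshadri lemma to produce $G$ of class $-mw|_C$ with the cohomology vanishing at $t_0$, identify $\la_{\sE_C}(G)^\vee$ with $\la_\sE(u)^{ma^2}$ via Proposition \ref{nefpowerfromcurves}, and then apply Lemma \ref{detsection} using Lemmas \ref{restsingle} and \ref{hypercohovanishing} to verify the hypotheses at every point. The only cosmetic differences are that you explicitly arrange $g(C)\ge 2$ (which the paper leaves implicit) and that you verify the Euler-characteristic vanishing $\chi(v|_C\cdot[G])=0$ via the projection formula and $\chi(v\cdot u)=0$ from Proposition \ref{nefclassonverticalwall}, whereas the paper computes it directly from the definition of $w$; both routes are equally valid.
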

\begin{proof}
    Let $E_0 \in \sA$ be the $\si$-semistable object corresponding to $t_0$. By Lemma \ref{restsemistable}, for some $a > 0$, there exists a smooth, connected curve $C \in |a H|$ such that the derived restriction $E_0|_C^\LL$ is a slope-semistable torsion-free sheaf on $C$.
    
    Recall from Proposition \ref{nefpowerfromcurves} that associated to $C \subs X$ is the class $w \in K(X)$, and that the class 
    \[ -w|_C = \chi(v|_C \cdot h|_C) \cdot 1 - \chi(v|_C) \cdot h|_C \in K(C) \] 
    has positive rank. For any integer $m > 0$, the class $-m w|_C \in K(C)$ is determined by its rank and determinant, and so it follows from Lemma \ref{seshadrimainlemma} that for sufficiently large $m$, there exists a locally free sheaf $G$ on $C$ of class $-m w|_C$ with the property that
    \[ H^0(C, G \otimes E_0|_C^\LL) = H^1(C, G \otimes E_0|_C^\LL) = 0. \]
    
    Now consider the diagram:
    \begin{center}
    \begin{tikzpicture}
    \matrix (m) [matrix of math nodes, row sep=4em, column sep=0.5em]
    { & \sM^\si(v) \times X & & \sM^\si(v) \times C & \\
    \sM^\si(v) & & X & & C \\};
    \path[left hook->] 
    (m-1-4) edge node[auto,swap] {$ j $} (m-1-2)
    (m-2-5) edge node[auto] {$ i $} (m-2-3)
    ;
    \path[->]
    (m-1-2) edge node[auto,swap] {$ p $} (m-2-1)
    (m-1-2) edge node[pos=0.9,yshift=10pt] {$ q $} (m-2-3)
    (m-1-4) edge node[pos=0.7,yshift=-8pt] {$ p_C $} (m-2-1)
    (m-1-4) edge node[auto] {$ q_C $} (m-2-5)
    ;        
    \end{tikzpicture}
    \end{center}
    As before, let $\sE$ denote the universal complex on $\sM^\si(v) \times X$ and $\sE_C$ its restriction to $\sM^\si(v) \times C$. By Lemma \ref{nefpowerfromcurves}, we have
    \[ \la_{\sE_C}(G)^\vee = \la_{\sE_C}(-m w|_C)^\vee = \la_{\sE_C}(m w|_C) = \la_{\sE_C}(w|_C)^m = \la_\sE(u)^{m a^2}. \]
    We will apply Lemma \ref{detsection} to the complex $\sE_C$ and the sheaf $G$ to obtain a global section $\de_G$ of $\la_\sE(u)^{m a^2}$ that is nonvanishing at $t_0 \in \sM^\si(v)$. Notice that the condition of Lemma \ref{detsection}(b) holds at $t_0$ by the choice of $G$, so to conclude the proof we only have to verify the conditions of Lemma \ref{detsection}(a).
    
    Fix a $\C$-point $t \in \sM^\si(v)$, and let $E_C = \sE_C|_{\{t\} \times C}^\LL$ denote the restriction to the fiber $\{t\} \times C$. Since $G$ is locally free, we have
    \begin{equation}\label{tensorcoho}
    \sH^i(E_C \otimes G) = \sH^i(E_C) \otimes G \quad \mathrm{for\;all\;} i.
    \end{equation}
    Since $E_C$ is the restriction of $\sE|_{\{t\}\times X}^\LL$ to $C \subs X$, by Lemma \ref{restsingle}, $E_C$ fits in a triangle
    \[ \sH^0(E_C) \to E_C \to \sH^1(E_C)[-1], \]
    where $\sH^1(E_C)$ is a torsion sheaf, and by (\ref{tensorcoho}) the same is true for $E_C \otimes G$. Thus, by Lemma \ref{hypercohovanishing}, we have $\Hh^i(C, E_C \otimes G) = 0$ if $i \neq 0, 1$. Moreover, by assumption $E_C$ has class $v|_C \in K(C)$, and so we obtain
    \begin{align*} 
        \chi(v|_C \cdot [G]) & = \chi(v|_C \cdot (-m w|_C)) \\
        & = m \chi(v|_C \cdot(\chi(v|_C \cdot h|_C) \cdot 1 - \chi(v|_C) \cdot h|_C)) \\
        & = m (\chi(v|_C \cdot h|_C) \chi(v|_C)  - \chi(v|_C) \chi(v|_C \cdot h|_C)) = 0.
    \end{align*}
    Thus, the conditions of Lemma \ref{detsection}(a) hold.
\end{proof}

\subsection{Proof of projectivity}
We will now use Proposition \ref{globalgen} to prove that the line bundle $\la_\sE(u)$ on $\sM^\si(v)$ descends to a semiample line bundle on the good moduli space $M^\si(v)$ and deduce that $M^\si(v)$ is projective.

\begin{thm}\label{projectivity}
    Let $(X, H)$ be a smooth, projective, polarized surface, $v \in \Kn(X)$ a class of positive rank, and $\si = (\sA, Z)$ a stability condition on the vertical wall for $v$ as in Section \ref{section:stabvertwall}. Let $\sM^\si(v)$ be the moduli stack of $\si$-semistable objects of class $v$ in $\sA$. The good moduli space $M^\si(v)$ of $\sM^\si(v)$ is projective, and the natural nef class $L_\si$ is ample.
\end{thm}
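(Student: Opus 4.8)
The plan is to prove that the line bundle $\la_\sE(u)$ on $\sM^\si(v)$ is semiample, descend this to $M^\si(v)$, and then upgrade semiampleness to ampleness using the strict positivity of $L_\si$ on curves. The essential input is already in hand: by Proposition \ref{globalgen}, for each closed point $t_0 \in \sM^\si(v)$ there are integers $a, m > 0$ and a global section of $\la_\sE(u)^{m a^2}$ that does not vanish at $t_0$.

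First I would pass from these pointwise non-vanishing sections to global generation of a single power. The non-vanishing locus of each such section is open in $\sM^\si(v)$, and since the stack is of finite type over $\C$, hence quasi-compact, finitely many of these loci cover it. This produces finitely many sections $s_1, \dots, s_k$ of powers $\la_\sE(u)^{N_1}, \dots, \la_\sE(u)^{N_k}$ with no common zero. Choosing $N$ divisible by every $N_i$ and replacing $s_i$ by $s_i^{N/N_i}$, I obtain a base-point-free collection of sections of the single line bundle $\la_\sE(u)^N$, so $\la_\sE(u)^N$ is globally generated on the stack.

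Next comes descent. Since $u$ maps to a positive multiple of $w_Z$ in $\Kn(X)_\R$ (Proposition \ref{nefclassonverticalwall}), Lemma \ref{nefdescendtogms} shows that $\la_\sE(u) \cong \pi^* L'$ for a line bundle $L'$ on $M^\si(v)$ whose numerical class is a positive multiple of $L_\si$. The good moduli space identity $\pi_* \Oh_{\sM^\si(v)} \cong \Oh_{M^\si(v)}$ gives $H^0(\sM^\si(v), \pi^* L'^N) \cong H^0(M^\si(v), L'^N)$, and since $\pi$ is surjective, the global generation of $\pi^* L'^N$ established above descends to global generation of $L'^N$. Hence a power of $L_\si$ is globally generated, $L_\si$ is semiample, and the sections of $L'^N$ define a morphism $g \colon M^\si(v) \to \p^N$ with $g^* \Oh_{\p^N}(1) \cong L'^N$.

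It remains to see that $g$ is finite, which forces $M^\si(v)$ to be a projective scheme and $L_\si$ to be ample. The morphism $g$ is proper because $M^\si(v)$ is proper over $\C$. If some fiber of $g$ had positive dimension it would contain a projective integral curve $C$ contracted by $g$; composing with the normalization $\tilde C \to C$ gives a map from a smooth projective curve along which $L_\si$ has degree $0$, so by part (2') of Lemma \ref{gmsnef} this map is constant, whence $C$ is a point, a contradiction. Therefore $g$ is proper with finite fibers, hence finite. Consequently $M^\si(v)$, being finite over the scheme $\p^N$, is a projective scheme, and $L'^N \cong g^* \Oh_{\p^N}(1)$, the pullback of an ample line bundle along a finite morphism, is ample; hence its positive multiple $L_\si$ is ample as well. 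The genuinely hard work is concentrated in Proposition \ref{globalgen}; among the remaining steps the only real subtlety is the finiteness of $g$, and it is precisely here that strict positivity on curves (Lemma \ref{gmsnef}), rather than mere semiampleness, is indispensable.
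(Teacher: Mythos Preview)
Your proposal is correct and follows essentially the same route as the paper: global generation via Proposition \ref{globalgen} plus quasi-compactness, descent of the line bundle and its sections to $M^\si(v)$, and then finiteness of the induced morphism to projective space using Lemma \ref{gmsnef}. The only cosmetic differences are that the paper invokes Proposition \ref{vbtogms} for the descent of sections (where you use the projection formula and $\pi_*\Oh_{\sM}\cong\Oh_M$ directly), and the paper is slightly more explicit that a priori $M^\si(v)$ is only an algebraic space, citing that a proper quasi-finite morphism to a scheme is representable before concluding finiteness.
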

\begin{proof}
    Let $\sE$ denote the universal complex on $\sM^\si(v) \times X$, and let $u \in K(X)$ be as in Proposition \ref{nefclassonverticalwall}. By Proposition \ref{globalgen}, for each $\C$-point $t \in \sM^\si(v)$, there exists an integer $N_t > 0$ and a global section $\de_t$ of $\la_\sE(u)^{N_t}$ that does not vanish at $t$. Since $\sM^\si(v)$ is quasicompact, there exists a single $N$ such that the line bundle $\la_\sE(u)^N$ is generated by finitely many global sections $\de_0,\ldots,\de_n \in \Ga(\sM^\si(v), \la_\sE(u)^N)$. 
    
    By Lemma \ref{nefdescendtogms} and the equivalence of categories of Proposition \ref{vbtogms}, the line bundle $\la_\sE(u)^N$ and the sections $\de_0,\ldots,\de_n$ descend to a line bundle $L$ and generating sections $\si_0,\ldots,\si_n \in \Ga(M^\si(v), L)$ on the good moduli space $M^\si(v)$ and induce a morphism $\pi: M^\si(v) \to \p^n$.
    
    We claim that $\pi$ has finite fibers. If not, there is a smooth, projective curve $C$ and a nonconstant morphism $g: C \to M^\si(v)$ such that the composition 
    \[ C \xrightarrow{g} M^\si(v) \xrightarrow{\pi} \p^n \]
    is constant. This implies that one of the sections $g^*\si_i$ is nowhere vanishing, implying that $g^*L \cong \Oh_C$. But by Proposition \ref{nefclassonverticalwall}, the line bundle $L$ is a positive multiple of the nef class $L_\si$ as an element of $\Num(M^\si(v))$ and so enjoys the positivity properties of Lemma \ref{gmsnef}. Thus, the line bundle $g^*L$ has positive degree since $g$ is nonconstant, a contradiction.
    
    Now $M^\si(v)$ is proper by Theorem \ref{gmsexists}, hence the map $\pi$ is proper. Thus, by Zariski's Main Theorem, $\pi$ is in particular quasi-finite, hence representable by schemes, see \cite[Chapter II, Theorem 6.15]{knutson} or \cite[\href{https://stacks.math.columbia.edu/tag/082J}{Tag 082J}]{stacks-project}. Thus, $M^\si(v)$ is in particular a scheme. Moreover, $\pi: M^\si(v) \to \p^n$ is finite, hence $L$ is ample, and we conclude that $M^\si(v)$ is projective.
\end{proof}

%%%%%%%%%%%%%%%%%%%%%%%%%%%%%%%%%%%%%%%%%%%%%
%  RELATIONSHIP TO GIESEKER AND UHLENBECK  %%
%%%%%%%%%%%%%%%%%%%%%%%%%%%%%%%%%%%%%%%%%%%%%

\section{Relationship to the Uhlenbeck compactification}
In this section we describe a bijective morphism $\Phi: M^{\mathrm{Uhl}}(v) \to M^\si(v)$ from the Uhlenbeck compactification of $\mu$-stable locally free sheaves to the good moduli space of $\si$-semistable objects, where $v \in \Kn(X)$ is a class of positive rank and $\si \in \Stab(X)$ lies on the vertical wall for $v$. To describe some context, let us consider the following diagram.

\begin{center}
    \begin{tikzpicture}
    \matrix (m) [matrix of math nodes, row sep=4em, column sep=4em]
    { \sM^{\mathrm{G}}(v) & \sM^{\mu}(v) & \sM^\si(v) \\
    M^{\mathrm{G}}(v) & M^{\mathrm{Uhl}}(v) & M^\si(v) \\};
    \path[right hook->] 
    (m-1-1) edge node[auto] {$ _\mathrm{open\, emb.} $} (m-1-2)
    (m-1-2) edge node[auto] {$ _\mathrm{open\, emb.} $} (m-1-3)
    ;
    \path[->]
    (m-1-1) edge node[auto] {$ _\mathrm{gms} $} (m-2-1)
    (m-1-2) edge node[auto] {$  $} (m-2-2)
    (m-1-3) edge node[auto] {$ _\mathrm{gms} $} (m-2-3)
    (m-2-1) edge node[auto] {$  $} (m-2-2)
    ;
    \path[->,dashed]
    (m-2-2) edge node[auto] {$ \Phi $} (m-2-3)
    ;
    \end{tikzpicture}
\end{center}
The top row consists of open embeddings of algebraic stacks, where from left to right the stacks are respectively that of Gieseker-semistable sheaves, $\mu$-semistable sheaves, and $\si$-semistable complexes, each of numerical class $v$ of positive rank. They all contain the stack $\sM^{\mu\mhyphen\mathrm{s}, \mathrm{lf}}(v)$ of $\mu$-stable locally free sheaves of class $v$ as an open substack, which moreover coincides with the stack of $\si$-stable objects of class $v$. We denote by $\sE$ the universal complex on $\sM^\si(v)$, and by $\sE_\mu$ its restriction to $\sM^{\mu}(v)$; this restriction is the universal sheaf. The vertical maps $\sM^{\mathrm{G}}(v) \to M^{\mathrm{G}}(v)$ and $\sM^\si(v) \to M^\si(v)$ are good moduli space maps.

The scheme $M^{\mathrm{Uhl}}(v)$ together with the middle vertical map was constructed by Li in \cite{li}, and stack-theoretically can be described as the projective spectrum 
\[ M^{\mathrm{Uhl}}(v) = \Proj \left(\bigoplus_{n \ge 0} \Ga(\sM^\mu(v), \la_{\sE_\mu}(u)^{\otimes n}) \right) \]
of the section ring of the line bundle $\la_{\sE_\mu}(u)$ on $\sM^\mu(v)$, where $u$ is as in Proposition \ref{nefclassonverticalwall}. It is not a good moduli space of $\sM^\mu(v)$, but its closed points naturally parameterize $\mu$-semistable sheaves up to the following equivalence relation. If $F$ is a torsion-free sheaf on $X$, it embeds into its double dual with cokernel $T$ supported in dimension 0:
\[ 0 \to F \to F^{\vee\vee} \to T \to 0. \]
Let $l_p(T)$ denote the length of the stalk $T_p$ as an $\Oh_{X,p}$-module at a closed point $p \in X$. Recall that if $F$ is $\mu$-semistable, we denote by $\gr(F)$ the direct sum of its Jordan-H\"older factors. Two $\mu$-semistable sheaves $F_1$ and $F_2$ correspond to the same point in $M^{\mathrm{Uhl}}(v)$ if and only if 
\begin{itemize}
    \item $\gr(F_1)^{\vee \vee}$ and $\gr(F_2)^{\vee \vee}$ are isomorphic, and
    \item $l_p(\gr(F_1)^{\vee\vee}/\gr(F_1)) = l_p(\gr(F_2)^{\vee\vee}/\gr(F_2))$ for all closed points $p \in X$.
\end{itemize}
As observed in \cite{LQ}, it follows from the classification of polystable objects in Proposition \ref{ss-object-vertical-classification} that the closed points of $M^{\mathrm{Uhl}}(v)$ and $M^\si(v)$ are in a set-theoretic bijection. In the next result we upgrade this bijection to a morphism of schemes.

\begin{thm}\label{uhlenbeck}
    There exists a morphism $\Phi: M^{\mathrm{Uhl}}(v) \to M^\si(v)$ that makes the above diagram commute and is bijective on points.
\end{thm}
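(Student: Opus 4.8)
The plan is to realize both $M^{\mathrm{Uhl}}(v)$ and $M^\si(v)$ as the $\Proj$ of the section ring of the determinantal line bundle $\la(u)$, and then to produce $\Phi$ from the restriction map on sections induced by the open embedding $\sM^\mu(v) \hookrightarrow \sM^\si(v)$. By Li's construction we already have $M^{\mathrm{Uhl}}(v) = \Proj(R_\mu)$, where $R_\mu = \bigoplus_{n \ge 0} \Ga(\sM^\mu(v), \la_{\sE_\mu}(u)^{\otimes n})$. On the other side, Proposition \ref{nefclassonverticalwall} together with Lemma \ref{nefdescendtogms} shows that $\la_\sE(u)$ descends to a line bundle $\sN$ on $M^\si(v)$ that is a positive multiple of the class $L_\si$, which is ample by Theorem \ref{projectivity}; hence $\sN$ is ample, and since descent of sections (Proposition \ref{vbtogms}) gives $\Ga(\sM^\si(v), \la_\sE(u)^{\otimes n}) = \Ga(M^\si(v), \sN^{\otimes n})$, I would conclude
\[ M^\si(v) = \Proj(R_\si), \qquad R_\si = \bigoplus_{n \ge 0} \Ga(\sM^\si(v), \la_\sE(u)^{\otimes n}). \]
Both section rings are finitely generated (Li's theorem, respectively ampleness of $\sN$), so these are genuine projective schemes.

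Because $\sE_\mu = j^*\sE$ is the restriction of the universal complex, restricting sections along $\sM^\mu(v) \hookrightarrow \sM^\si(v)$ produces a homomorphism of graded rings $\rho \colon R_\si \to R_\mu$. By the functoriality of $\Proj$, this $\rho$ induces a morphism $\Phi \colon M^{\mathrm{Uhl}}(v) \to M^\si(v)$ defined on the open locus where the image of the irrelevant ideal $\rho\big((R_\si)_{+}\big)$ does not vanish. Thus the content of the theorem is to show that this locus is everything and that $\Phi$ is bijective on points.

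The hard part will be showing that $\Phi$ is everywhere defined, i.e. that $\rho\big((R_\si)_{+}\big)$ has no common zero on $M^{\mathrm{Uhl}}(v)$. This is exactly where the comparison with the proof of projectivity enters: the determinantal sections $\de_G \in \Ga(\sM^\si(v), \la_\sE(u)^{m a^2})$ produced in Proposition \ref{globalgen} restrict under $\rho$ to precisely the sections that Li uses in \cite{li} to construct $M^{\mathrm{Uhl}}(v)$ as a $\Proj$, since both are built from restriction to a curve $C \in |aH|$ followed by the Faltings--Seshadri lemma. Granting this identification, Li's sections generate the ring $R_\mu$ defining $M^{\mathrm{Uhl}}(v)$ and so have empty base locus there; consequently $\rho\big((R_\si)_{+}\big)$ has no common zero, and $\Phi$ is a morphism. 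I expect the main technical obstacle to be carefully matching the two families of determinantal sections under restriction, verifying that my construction of $\de_G$ agrees with Li's on the locus of $\mu$-semistable sheaves.

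Commutativity of the diagram would then follow from the universal property of $\Proj$: both composites $\sM^\mu(v) \to M^\si(v)$ pull $\sN$ back to $\la_{\sE_\mu}(u)$ equipped with the same generating sections $\rho(R_\si)$, so they must coincide. Finally, bijectivity on points can be read off from the classification of Proposition \ref{ss-object-vertical-classification}. A closed point of $M^{\mathrm{Uhl}}(v)$ is the Uhlenbeck class of a $\mu$-semistable sheaf $F$, recorded by $\gr(F)^{\vee\vee}$ together with the lengths $l_p\big(\gr(F)^{\vee\vee}/\gr(F)\big)$; chasing it through the commutative diagram, $\Phi$ sends it to the S-equivalence class of the $\si$-polystable object
\[ \gr(F)^{\vee\vee} \oplus \Big(\bigoplus_{p \in X} \Oh_p^{\oplus l_p}[-1]\Big) \]
appearing in Proposition \ref{ss-object-vertical-classification}(iii). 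Injectivity then follows from uniqueness of the polystable representative, together with the fact that the pair $\big(\gr(F)^{\vee\vee}, \{l_p\}\big)$ determines the Uhlenbeck class; surjectivity follows because every object of the form in Proposition \ref{ss-object-vertical-classification}(iii) arises from such an $F$. This recovers the set-theoretic bijection already observed in \cite{LQ}, now promoted to a morphism.
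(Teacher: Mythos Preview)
Your strategy coincides with the paper's: both realize $M^{\mathrm{Uhl}}(v)$ and $M^\si(v)$ as $\Proj$ of the section rings $R_\mu$ and $R_\si$ of $\la(u)$, obtain $\Phi$ from the restriction homomorphism $\rho\colon R_\si \to R_\mu$, and deduce bijectivity from the classification in Proposition~\ref{ss-object-vertical-classification}. The injectivity argument is identical to the paper's.

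You have, however, misjudged where the work lies. You flag showing that $\Phi$ is everywhere defined as the ``hard part'' and propose to accomplish it by identifying your sections $\de_G$ with Li's. This detour is unnecessary. By Proposition~\ref{globalgen} and the proof of Theorem~\ref{projectivity}, finitely many $\de_G$ already generate $\la_\sE(u)^N$ on all of $\sM^\si(v)$; since $\sM^\mu(v)\hookrightarrow\sM^\si(v)$ is an open immersion, their restrictions $\rho(\de_G)$ still generate $\la_{\sE_\mu}(u)^N$ on $\sM^\mu(v)$, and since $\sM^\mu(v)\to M^{\mathrm{Uhl}}(v)$ is surjective these have no common zero on $M^{\mathrm{Uhl}}(v)$. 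So $\rho((R_\si)_+)$ has empty base locus and $\Phi$ is a morphism immediately; the paper handles this in one sentence, with no comparison to Li's specific sections.

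Your surjectivity is only a sketch (``every object of the form in Proposition~\ref{ss-object-vertical-classification}(iii) arises from such an $F$''). The paper supplies the missing construction: given a $\si$-polystable $E = F \oplus \bigoplus_j \Oh_{p_j}^{\oplus n_j}[-1]$ with $F$ locally free $\mu$-polystable, choose Artinian quotients $R_j$ of $\Oh_{X,p_j}$ of length $n_j$ and a surjection $F \twoheadrightarrow \bigoplus_j R_j$; the kernel is a $\mu$-semistable sheaf of class $v$ that is $\si$-S-equivalent to $E$, so $\sM^\mu(v)\to M^\si(v)$ hits every closed point.
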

\begin{proof}
    From Theorem \ref{projectivity} we see that the good moduli space $M^\si(v)$ is the projective spectrum
    \[ M^{\si}(v) = \Proj \left(\bigoplus_{n \ge 0} \Ga(\sM^{\si}(v), \la_\sE(u)^{\otimes n}) \right). \]
    The restriction maps
    \[ \Ga(\sM^{\si}(v), \la_\sE(u)^{\otimes n}) \to \Ga(\sM^\mu(v), \la_{\sE_\mu}(u)^{\otimes n}) \]
    give a homomorphism of graded rings
    \[ \bigoplus_{n \ge 0} \Ga(\sM^{\si}(v), \la_\sE(u)^{\otimes n}) \to \bigoplus_{n \ge 0} \Ga(\sM^\mu(v), \la_{\sE_\mu}(u)^{\otimes n}). \]
    Since the restrictions of sections of $\la_\sE(u)^{\otimes n}$ to $\sM^\mu(v)$ have no base points, this ring map induces the morphism $\Phi: M^{\mathrm{Uhl}}(v) \to M^\si(v)$. 
    
    To prove that $\Phi$ is surjective, it is enough to show that the composition $\sM^\mu(v) \hookrightarrow \sM^{\si}(v) \to M^\si(v)$ is surjective on $\C$-valued points. So let 
    \[ E = \left(\bigoplus_i F_i\right) \oplus \left(\bigoplus_j \Oh_{p_j}^{\oplus n_j}[-1]\right) \]
    be a $\si$-polystable object corresponding to a closed point of $M^\si(v)$, where the points $p_j \in X$ are distinct. Let $R_j$ be an Artinian quotient of the local ring $\Oh_{X, p_j}$ of length $n_j$. Note that the object
    \[ E' = \left(\bigoplus_i F_i\right) \oplus \left(\bigoplus_j R_j [-1]\right) \]
    is $\si$-semistable whose stable factors are the direct summands of $E$, and so $E'$ corresponds to the same closed point of $M^\si(v)$ as $E$. Let $F$ denote the polystable locally free sheaf $\oplus_i F_i$, choose a surjective map
    \[ F \twoheadrightarrow \bigoplus_j R_j, \]
    and let $E''$ denote the kernel of this surjection. The sheaf $E''$ is $\mu$-semistable of class $v$, and in the heart $\sA$ fits in the triangle
    \[ \bigoplus_j R_j[-1] \to E'' \to F. \]
    Thus, the stable factors of $E''$ with respect to $\si$ are again the direct summands of $E$, and so $E''$ corresponds to a $\C$-point of $\sM^\mu(v)$ that maps to the point corresponding to $E$ in $M^\si(v)$.
    
    To prove that $\Phi$ is injective, let $F$ be a $\mu$-polystable sheaf. Letting $T$ denote the quotient $F^{\vee\vee}/F$, we get a short exact sequence
    \[ 0 \to T[-1] \to F \to F^{\vee\vee} \to 0 \]
    in $\sA$. Now $F^{\vee\vee}$ is a $\mu$-polystable locally free sheaf, and in the Jordan-H\"older filtration of $T[-1]$ with respect to $\si$, the object $\Oh_p[-1]$ appears as a factor exactly $l_p(T)$ times for each $p \in X$. Thus, the $\si$-polystable object corresponding to $F$ is
    \[ F^{\vee\vee} \oplus \left(\bigoplus_{p \in X} \Oh_p^{\oplus l_p(T)}[-1] \right). \]
    From this description it is clear that two $\mu$-polystable sheaves map to the same point in $M^\si(v)$ if and only if they map to the same point in $M^{\mathrm{Uhl}}(v)$.
\end{proof}

\bibliographystyle{alpha}
%\addcontentsline{toc}{section}{References}
\bibliography{bibliography}

\end{document}